\documentclass[a4paper, UKenglish, cleveref, autoref]{article}

\usepackage[T1]{fontenc}
\usepackage[a4paper, margin=1.1in]{geometry}
\usepackage[mathscr]{eucal}
\usepackage[linesnumbered,ruled,vlined]{algorithm2e}
\usepackage{standalone}
\usepackage{lmodern}
\usepackage{caption}
\usepackage{amsfonts,amsmath,amsthm,amssymb,mathtools}
\usepackage{dsfont}
\usepackage{enumitem}
\usepackage{thmtools}
\usepackage{graphicx}
\usepackage{tikz}
\usepackage{tikz-cd}
\usepackage{tikz-3dplot}
\usepackage{quiver}
\usepackage{float}
\usepackage[dvipsnames,table]{xcolor}
\usepackage{tcolorbox}
\tcbuselibrary{skins, breakable}
\usepackage{booktabs}
\usepackage{siunitx}
\usepackage{multirow}
\usepackage[final]{microtype}
\usepackage{hyperref}
\usepackage[backend=bibtex, style=numeric, natbib, maxbibnames=99]{biblatex}
\hypersetup{
  colorlinks=true,
  linkcolor=darkblue,   % internal: sections, theorems, equations
  citecolor=darkred,    % citations
  urlcolor=darkblue,    % URLs (same as links is fine)
}

\newtheorem{theorem}{Theorem}[section]

\theoremstyle{plain}
\newtheorem{lemma}[theorem]{Lemma}

\newtheorem{proposition}{Proposition}[section]

\newtheorem{corollary}{Corollary}[section]

\newcommand{\problemname}{Problem}
\newcommand{\questionname}{Question}
\newcommand{\conjecturename}{Conjecture}
\newcommand{\remarkname}{Remark}
\newcommand{\observationname}{Observation}

\theoremstyle{definition}
\newtheorem{example}{Example}[section]
\newtheorem{definition}{Definition}[section]
\newtheorem{notation}{Notation}[section]

\theoremstyle{definition}

\newtheorem{question}{\questionname}

\theoremstyle{plain}

\theoremstyle{remark}

\newtheorem{remark}{\remarkname}

\newtheorem{observation}{\observationname}

\tcolorboxenvironment{prob}{
  enhanced, breakable, 
  colback=gray!8, colframe=black,
  boxrule=1.2pt, arc=0pt,
  before skip=\topsep, after skip=\topsep,
}

\tcolorboxenvironment{conjecture}{
  enhanced, breakable, 
  colback=gray!8, colframe=black,
  boxrule=1.2pt, arc=0pt,
  before skip=\topsep, after skip=\topsep,
}

\tcbset{thmbox/.style={
  enhanced, breakable,
  colback=gray!5, colframe=black,
  boxrule=0.6pt, arc=0pt,
  before skip=\topsep, after skip=\topsep,
}}

\tcolorboxenvironment{theorem}{thmbox}

\tcolorboxenvironment{question}{
  enhanced, breakable, 
  colback=white, colframe=black,
  boxrule=0.6pt, arc=0pt,
  before skip=\topsep, after skip=\topsep,
}

\tcolorboxenvironment{remark}{
  enhanced, breakable, 
  colback=white, colframe=white,
  borderline west={2pt}{0pt}{black!40},
  boxrule=0pt, arc=0pt,
  left=8pt,
  before skip=\topsep, after skip=\topsep,
}

\tcolorboxenvironment{observation}{
  enhanced, breakable, 
  colback=white, colframe=white,
  borderline west={2pt}{0pt}{black!40},
  boxrule=0pt, arc=0pt,
  left=8pt,
  before skip=\topsep, after skip=\topsep,
}

\newtheoremstyle{algorithmstyle}
  {10pt} % Space above
  {10pt} % Space below
  {\normalfont} % Body font
  {} % Indent
  {\bfseries} % Head font
  {} % No Punctuation after head
  {1em} % Space after head
  {\thmname{#1}\thmnumber{ #2}\thmnote{ (#3)}\newline} % Head spec + forced line break
\theoremstyle{algorithmstyle}

\makeatletter
\let\c@myalgorithm\c@algocf
\makeatother

\SetKw{KwRet}{return}
\SetKw{KwTo}{to}
\SetKw{KwDownto}{downto}
\SetKw{KwAnd}{and}
\SetKw{KwBreak}{break}
\SetKw{KwContinue}{continue}
\SetKw{KwTrue}{true}
\SetKw{KwFalse}{false}

\newcommand{\N}{\mathbb{N}}
\newcommand{\Z}{\mathbb{Z}}
\newcommand{\R}{\mathbb{R}}

\newcommand{\K}{\mathds k} 
\newcommand{\Der}{\mathcal{D}}
\newcommand{\vect}{\textbf{Vect}}
\newcommand{\grA}{A\text{-GrMod}}
\newcommand{\Set}{\textbf{Set}}
\newcommand{\oto}[1]{\xrightarrow{#1}}

\newcommand{\lra}{\longrightarrow}
\global\long\def\into{\hookrightarrow}%
\global\long\def\onto{\twoheadrightarrow}%
\newcommand{\iso}{\xrightarrow{\,\smash{\raisebox{-0.5ex}{\ensuremath{\scriptstyle\sim}}}\,}}
\global\long\def\into{\hookrightarrow}%
\global\long\def\onto{\twoheadrightarrow}%

\DeclareMathOperator\Img{Im}
\DeclareMathOperator\coker{coker}
\DeclareMathOperator\Hom{Hom}

\DeclareMathOperator{\D}{D}
\DeclareMathOperator{\End}{End}
\DeclareMathOperator{\Ext}{Ext}
\DeclareMathOperator{\fun}{Fun}
\DeclareMathOperator{\Tor}{Tor}
\DeclareMathOperator{\Id}{Id}

\newcommand{\set}[1]{ {\left\{#1\right\}} }
\newcommand{\br}[1]{ {\left(#1\right)} }
\newcommand{\CC}{C\nolinebreak\hspace{-.05em}\raisebox{.4ex}{\tiny\bf +}\nolinebreak\hspace{-.10em}\raisebox{.4ex}{\tiny\bf +}}
\def\CC{{C\nolinebreak[4]\hspace{-.05em}\raisebox{.4ex}{\tiny\bf ++}}}

\newcommand{\Oc}{\mathcal{O}}
\newcommand{\thick}[1]{\mathfrak{t}#1}
\newcommand{\lthick}[1]{\mathfrak{t}_{l}#1}
\newcommand{\thickb}[1]{\mathfrak{t}(#1)}
\newcommand{\lthickb}[1]{\mathfrak{t}_{l}(#1)}

\newcommand{\ignore}[1]{}

\definecolor{lightred}{rgb}{1, 0.5, 0.5}
\definecolor{darkred}{rgb}{0.8, 0.3, 0.3}
\definecolor{darkerred}{rgb}{0.6, 0.1, 0.1}

\definecolor{lightblue}{rgb}{0.4, 0.5, 1}
\definecolor{darkblue}{rgb}{0.3, 0.3, 0.8}
\definecolor{Darkblue}{rgb}{0.2,0.2,0.7}
\definecolor{darkerblue}{rgb}{0.1, 0.1, 0.6}
\definecolor{darkestblue}{rgb}{0.05,0.05,0.35}

\newcommand{\lrcell}{\cellcolor{lightred}}
\newcommand{\lbcell}{\cellcolor{lightblue}}
\newcommand{\gcell}{\cellcolor{gray}}

\title{Computing Homomorphisms of Poset Representations with Applications to Multiparameter Persistence}
\author{Jan Jendrysiak}
\date{\today}

\begin{document}

\maketitle

\begin{abstract}
We present algorithms to compute the vector space of homomorphisms 
$\Hom(X,Y)$ between finitely generated representations of the partially ordered set $\Z^d$. Our results generalise to any
partially ordered set.

Our main theoretical 
contribution is a uniqueness result for lifts of homomorphisms along 
free resolutions, which we use to obtain an 
algorithm running in $\mathcal{O}(n^4(\thick{Y} + \thick{\Omega^1 Y})^2 
+ T_{\ker}(d,n))$ time, where $\thick{Y}$ denotes the maximal pointwise dimension 
of $Y$ and $T_{\ker}$ is the time it takes to compute the kernel of a map between projective $\Z^d$-modules. We also apply and analyse a classical approach due to Green, Heath, and Struble (J. Symbolic Comput., 2001), achieving $\mathcal{O}(n^3 \thick{Y}^3 + n^4)$. 
Both improve on the naive $\mathcal{O}(n^6)$ bound when $\thick{Y}$ is small.

Applied to the decomposition algorithm \textsc{aida} 
(Dey--J--Kerber, SoCG~'25), the classical approach improves the asymptotic runtime the most,
strengthening the result of Dey and Xin (J. Appl. Comput. Topology, 2022) for uniquely graded modules. We implement all algorithms in the 
\textsc{Persistence Algebra} \CC{} library and benchmark them on the persistent homology of density-alpha bi-filtrations of 
immune-cell locations. The classical approach has 
the best worst-case complexity, yet for $2$-parameter modules, the lifting algorithm is
fastest in practice.
\end{abstract}

\begin{figure}[H]
\centering
\includegraphics[scale = 1.1]{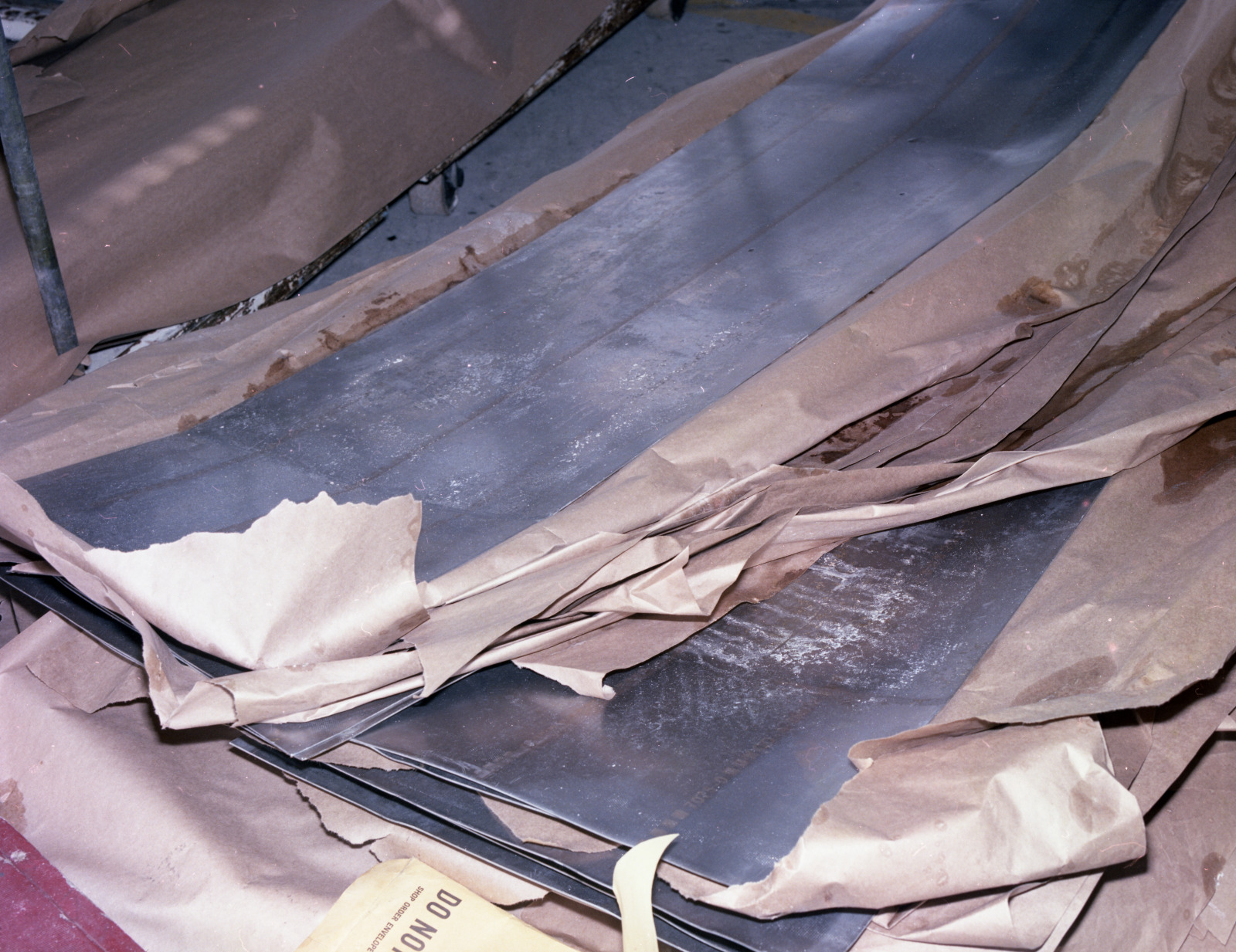}
\caption{Sheets of metal and paper - San Diego Air \& Space Museum Archives}
\label{fig:metal}
\end{figure}

\subparagraph{Poset Representations and Multiparameter Persistence.}

Representations of partially ordered sets are a classical subject that has been studied since the 70's \cite{Nazarova1975, Rojter1980, Kleiner1975, Loupias}. In the past decade, poset representations have become a central tool in applied topology through their appearance as \emph{Persistent Homology Groups}.

Persistent Homology \cite{Edelsbrunner_top_persistence} is a standard tool in Topological Data Analysis to estimate the homology groups of a space from a finite point sample, eg. via the Vietoris-Rips or Cech complex. One of its problems is that these constructions are sensitive to outliers. The most promising ansatz to solve this problem \cite{blumberg_lesnick} is to filter the complex along a second density parameter, leading to \emph{Multiparameter Persistent Homology} \cite{CZMP}. Its result is a
Multiparameter Persistence Module (or just persistence module) -- a functor $\R^d \to \vect_\K$. Since persistence modules are in practice mostly finitely presented, we can, for algebraic manipulation, assume that they are finitely presented functors $\Z^d \to \vect_K$ instead. Let $A \coloneqq \K[x_1, \dots, x_d]$, then these are exactly the finitely generated $\Z^d$-graded $A$-modules.

As with persistence modules, any finitely presented representation of a poset can be seen as a representation over a finite poset $\mathcal{P}$, which in turn can be faithfully embedded into $\Z^d$. Therefore any statement about $\Z^d$-graded $A$-modules is almost verbatim true for general finitely presented poset representations.
We will therefore, without loss of generality, state all definitions and theorems in this paper for $\Z^d$-graded $A$-modules. This has multiple advantages:

Most importantly, they appear in classical commutative algebra \cite{ MillerSturmfels2005} and have been studied for a long time \cite{goto1978graded}.
For us, who have the application to the Persistent Homology of bi-filtrations in mind, there is another reason. Many algebraic computations are known to be faster when $d$ is small, so we want to state results in relation to $d$. For example, when $d=2$ or $d=3$, this includes the computation of kernels \cite{lw-computing, kernels} and projective resolutions of chain-complexes consisting of upset-modules \cite{bauer26}. 
$\Z^d$-graded $A$-modules also have additional structure that is relevant to us; the vector space $\Hom(X,Y)$ can be enriched to a graded $A$-module. 

At last, we remark that this generality implies that our results hold in particular for \emph{Zig-Zag} modules (representations over $\Z \Z$ - two adjacent anti-diagonals in $\Z^2$) \cite{Carlsson2010} or representations over $\Z \times \Z \Z$ (\cite{dey_zigzag}.

\subparagraph{Motivation: Decomposition of Modules.}
Multiparameter persistence modules do not decompose into nice summands like their 1-parameter counterparts, which admit a \emph{barcode}. To use them for data analysis, one therefore uses invariants that are statistically well behaved and easy to interpret. These are only useful if they can be computed in a practical time-frame, and this requires fast algorithms for fundamental algebraic operations.

By Krull-Schmidt-Azumaya, finitely generated persistence modules still \emph{have} a decomposition into indecomposable summands that is unique up to permutation. Computing such a decomposition can accelerate the computation of additive invariants. This can make otherwise intractable invariants practically computable for large modules. We have shown this for the \emph{fibred barcode} \cite[Table 3]{djk} and the \emph{Skyscraper Invariant} \cite{FJ26}. To decompose any module $X$ over a finite dimensional algebra, classical algorithms \cite{CGK97, LuxSzoke}
compute the Endomorphism-algebra $\End(X) = \Hom(X,X)$ and then decompose this algebra. Algorithms specialised for persistence modules have a different strategy: the \emph{generalized persistence algorithm} \cite{DeyXin} performs an iterative matrix reduction and works for uniquely graded modules. In \cite{djk}, we realised that this matrix reduction is equivalent to computing the set of homomorphisms between \emph{indecomposable} submodules of certain projective covers (\cite[Section 4]{djk_arXiv}) of $X$. This allowed the development of the faster and generally applicable decomposition algorithm \textsc{aida} \cite[Section 5]{djk_arXiv}.

For both types of algorithms, the computation of homomorphisms is the bottleneck. Since our focus is on improving \textsc{aida} there is an emphasis on computing homomorphisms between \emph{indecomposable} modules.

\subparagraph{Homomorphisms.}\label{par:direct}
In practice \cite{lw-interactive, mpfree}, persistence modules are given to us by their presentations.
\[ \text{Consider \ } \quad  A[R] \oto{M} A[G] \to X \to 0 \quad \text{ and } \quad A[R'] \oto{N} A[G'] \to Y \to 0\] where $A[G]\coloneqq \bigoplus_{g \in G} A[-\deg(g)]$ is the graded free $A$-module with basis $G$ and the maps $M$, $N$ are \emph{graded matrices} (\autoref{def:graded_matrix}) over $\K$. We would like to store a map $X \to Y$ as a matrix in the coordinates given by the generators $G$ and $G'$.
A straightforward way to obtain such lifts of $\Hom(X,Y)$ is to directly compute the vector space of morphisms of presentations, $\Hom(M, N)$, as the set of maps $P, Q$ such that the following diagram commutes.
% https://q.uiver.app/#q=WzAsOCxbMCwwLCJBW1JdIl0sWzEsMCwiQVtHXSJdLFswLDEsIkFbUiddIl0sWzEsMSwiQVtHXSJdLFsyLDAsIlgiXSxbMiwxLCJZIl0sWzMsMCwiMCJdLFszLDEsIjAiXSxbMCwxLCJNIl0sWzIsMywiTiJdLFswLDIsIlAiLDJdLFsxLDMsIlEiXSxbMSw0XSxbMyw1XSxbNCw1LCJcXHdpZGV0aWxkZSBRIiwwLHsic3R5bGUiOnsiYm9keSI6eyJuYW1lIjoiZG90dGVkIn19fV0sWzQsNl0sWzUsN11d
\[\begin{tikzcd}
	{A[R]} & {A[G]} & X & 0 \\
	{A[R']} & {A[G]} & Y & 0
	\arrow["M", from=1-1, to=1-2]
	\arrow["P"', from=1-1, to=2-1]
	\arrow[from=1-2, to=1-3]
	\arrow["Q", from=1-2, to=2-2]
	\arrow[from=1-3, to=1-4]
	\arrow["{\widetilde Q}", dotted, from=1-3, to=2-3]
	\arrow["N", from=2-1, to=2-2]
	\arrow[from=2-2, to=2-3]
	\arrow[from=2-3, to=2-4]
\end{tikzcd}\]
Equivalently, we solve the \emph{linear} system of equations
 \begin{equation}\label{eq:naive}
 QM - NP = 0 \quad \text{ for graded matrices} \ Q \colon A[G]  \to A[G'], \ P \colon A[R]  \to A[R'].
 \end{equation}
If the sizes of $M$ and $N$ are bounded by $n \times n$, then using Gauss-elimination, solving \autoref{eq:naive} takes $\Oc(n^6)$ time. In this paper we will answer

\begin{question}\label{qu:n6}
    Can we compute $\Hom(X,Y)$ faster than in $\Oc(n^6)$ time\footnotemark?
\end{question}
\footnotetext{with standard matrix reduction}

\subparagraph{Other Applications in Topological Data Analysis.}\label{par:applications}
The need to compute homomorphisms arose in two other computational problems. 
\begin{itemize}
\item Bjerkevik introduces the \emph{Pruning} of a module in \cite{Bjerkevik2025} which stabilises decompositions with respect to the interleaving distance. To compute it, one needs to compute the shifted endomorphisms $\Hom\left(X,\, X{[2\epsilon]}\right)$ \cite[Lemma 5.2.]{Bjerkevik2025}.

\item In \cite{ASASHIBA2025107905}, Asashiba proves that the relative Betti numbers for the interval-resolution of a module $X$ can be computed by a complex of vector spaces $\Hom(\mathcal{K}_*, X)$, where each module in the complex $\mathcal{K}_*$ is interval-decomposable. 

\end{itemize}

\subparagraph{Related Work.}

In \cite[4.1 Algorithm Interleaving]{dey_xin}, Dey and Xin explain a strategy to compute the set of homomorphisms between interval modules in linear time for 2 parameters and  in quadratic time in the general case.
For representations over finite-dimensional algebras, optimised algorithms for homomorphism computation are described by Lux and Sz\H{o}ke \cite{luxsz03} and by Green, Heath, and Struble in \cite{ghs01}, the latter of which we adapt to our special case. Implementations are also found in GAP \cite{GAP4} and MAGMA \cite{MAGMA}. 

\subparagraph{Acknowledgements}

I am grateful for Michael Kerber's vital and ongoing support for my dissertation, of which this paper is a part. His work on algorithmic problems in Multiparameter Persistence has been a huge influence on this paper.

 Much of this paper was written when I visited the Kyoto University Institute for Advanced Study. I want to thank Yasuaki Hiraoka and his whole team for their hospitality during this time, and especially Justin Descrochers, Enhao Liu, and Pavel Pangrac for enlightening discussions about representation theory and interval approximations, which led to considering the dual algorithms in this paper. I also want to thank Manon Tacconi.

At last I am indebted to Fabian Lenzen and Ezra Miller for discussions and explanations about duality theorems for multiparameter persistence modules.

\section{Introduction}

\subsection*{The empirical structure of MPM}

We computed indecomposable decompositions of the persistent homology groups of dimensions $1$ and $2$ of some typical \emph{density-scale} bi-filtrations (density-rips \cite{CZMP, BMT17}, multicover \cite{Sheehy12, EO18, CKLO23}) (\autoref{fig:layer_thick}). This revealed something surprising about the indecomposable summands.

\begin{figure}[H]
    \centering
    \begin{minipage}[t]{0.48\textwidth}
    \vspace{0pt}
        \centering
        \includegraphics[width=\textwidth]{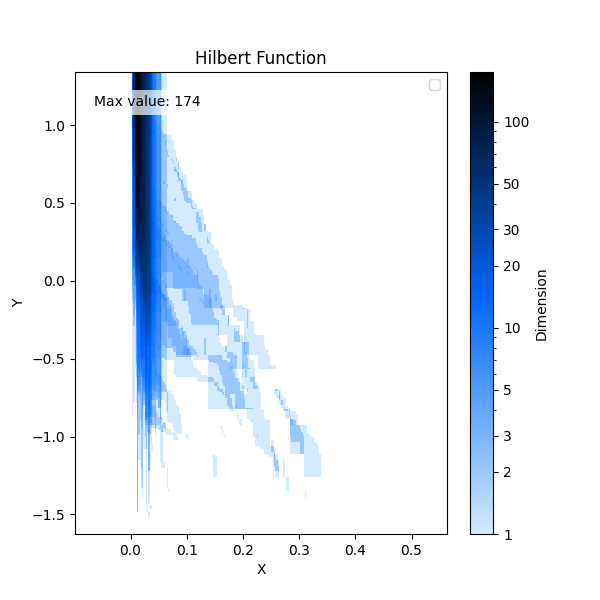}
    \end{minipage}
    \hfill
    \begin{minipage}[t]{0.48\textwidth}
    \vspace{0pt}
        \centering
        \includegraphics[width=0.48\textwidth]{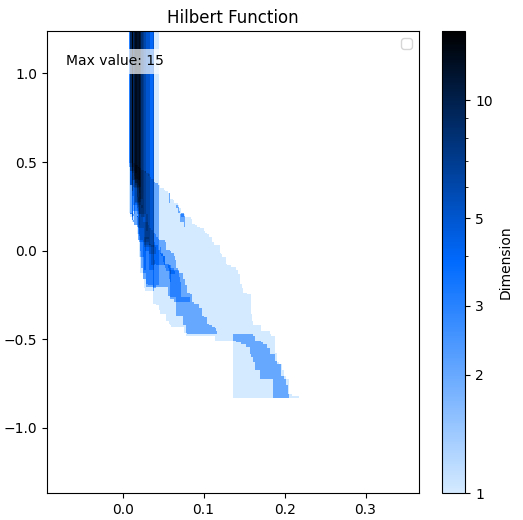}
        \hfill
        \includegraphics[width=0.48\textwidth]{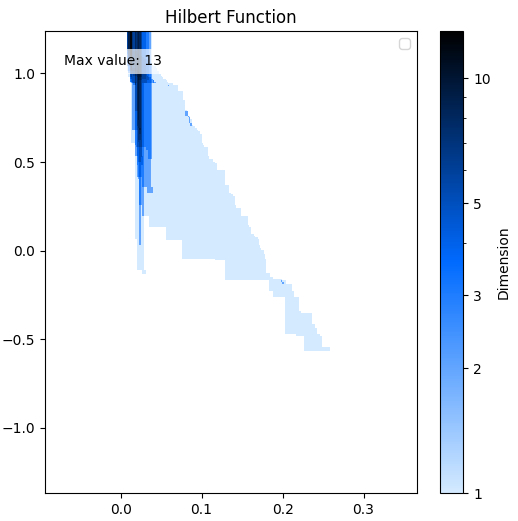}
        
        \vspace{0.5em}
        
        \includegraphics[width=0.48\textwidth]{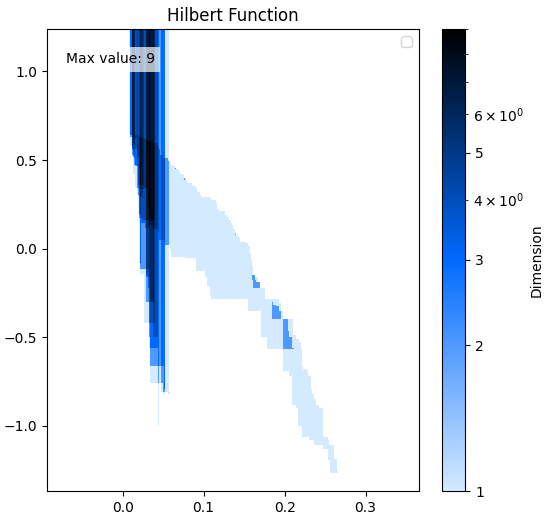}
        \hfill
        \includegraphics[width=0.48\textwidth]{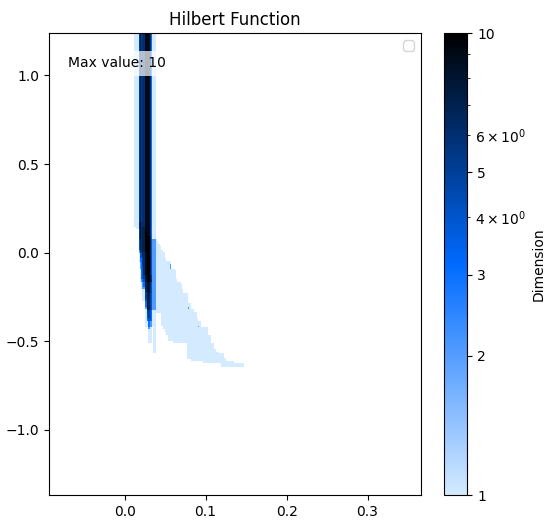}
    \end{minipage}
    \caption{Hilbert function of $H_1$ of a density-delaunay filtration of the \emph{Large Hypoxic Regions} (FOXP3) dataset from \cite{Vipond21} on a $200 \times 200$ grid (left). Hilbert functions of its 4 largest indecomposable summands (right). This module has 5917 generators, and 74\% of these belong to an interval-summand}
    \label{fig:hypoxic}
\end{figure}

A few summands have many generators and a large area of support. At small scale-values, many short lived homology classes appear and are glued 
together by simplices which appear in the filtration at larger parameter pairs. This creates a high-dimensional vertical strip of noise.

For higher scale-values, we observe a large region that is
mostly of low dimension, often even $1$. We have plotted the pointwise dimension (Hilbert function) of $4$ such modules in \autoref{fig:hypoxic}.
Most of the other summands have a very small support (they are "noise") and are of dimension $1$ everywhere, which, for $2$ parameters, is equivalent to being an interval-module. For example, the module used for \autoref{fig:hypoxic} has 5917 generators, and 74\% of these belong to an interval-summand.

\subparagraph{Thickness.} We need a notion to describe this behaviour.
\begin{definition}\label{def:thickness}
    Let $X \colon \R^d \to \vect_\K$. We define its \emph{thickness} as
    \[ \thickb{X} \coloneqq \max\limits_{\alpha \in \R^d} \dim_\K X_\alpha \]
    and its \emph{layer-thickness} as the maximal thickness of its indecomposable summands
    \[ \lthickb{X} \coloneqq \max\limits_{Y \in \text{Indec}(X)} \thickb{Y} . \]
\end{definition}

We will never encounter 
layer-thickness-$1$ modules (equivalently, \emph{interval decomposable} modules for $2$ parameters)
in the setting of these bi filtrations for larger pointsets as shown by Alonso, Kerber, and Skraba in \cite{AKS24}. Yet, they are apparently often not \emph{so} far 
from it. For $H_0$, it is even proven that at least $25\% $ of the indecomposable summands are intervals \cite[Theorem 5.1]{AlonsoKerber23}. 

\begin{remark}\label{rem:layers}
 We picture $2$-parameter 
 persistence modules as consisting of thin layers with crumbled edges, 
 sometimes interwoven, like the sheets in \autoref{fig:metal}.
 \end{remark}
Since $\Hom(X,Y)$ can be computed quickly for interval modules  \cite{dey_xin}, we investigated whether something similar can be done for modules of \emph{low} pointwise dimension.

\subsection*{Contributions}

To illustrate the setting, Figure~\ref{fig:hom1} shows a homomorphism between two low-thickness two-parameter persistence modules. 
\begin{figure}[H]
\centering
\tdplotsetmaincoords{70}{30}
\begin{tikzpicture}[tdplot_main_coords, scale=1]

  % ---- BLUE MODULE (z = 0) ----
  % base blue shape
  \fill[lightblue, opacity=1]
    (0,1,0) -- (0,4,0) -- (5,4,0) -- (5,0,0) -- (1,0,0) -- (1,1,0) -- cycle;
  \fill[darkblue, opacity=1]
    (1,1,0) -- (1,4,0) -- (1.9,4,0) -- (1.9,1.9,0) -- (5,1.9,0) -- (5,1,0) -- cycle;

    \foreach \yi/\opacity in {4/0.7, 4.3/0.5, 4.6/0.3, 5/0} {
      \pgfmathsetmacro{\yUpper}{ifthenelse(\yi+0.3 > 5, 5, \yi+0.3)}
      \fill[lightblue, opacity=\opacity]
        (0, \yi, 0) -- (0, \yUpper, 0) -- (5, \yUpper, 0) -- (5, \yi, 0) -- cycle;
      \fill[darkblue, opacity=\opacity]
        (1, \yi, 0) -- (1, \yUpper, 0) -- (2, \yUpper, 0) -- (2, \yi, 0) -- cycle;
    }
  
  % ---- RED MODULE (z = 1) ----
  \fill[lightred, opacity=1]
    (2,2,1) -- (2,4,1) -- (6,4,1) -- (6,2,1) -- cycle;

  \foreach \yi/\opacity in {4/0.7, 4.3/0.5, 4.6/0.3, 5/0} {
  \pgfmathsetmacro{\yUpper}{ifthenelse(\yi+0.3 > 5, 5, \yi+0.3)}
  \fill[lightred, opacity=\opacity]
    (2, \yi, 1) -- (2, \yUpper, 1) -- (6, \yUpper, 1) -- (6, \yi, 1) -- cycle;
    }
    
% Vertical homomorphism arrows with bold transparent double stroke
\foreach \x/\y in {2/2, 6/2} {
  % Two parallel lines for the shaft (no arrowheads)
  \draw[black, line width=1.5pt]
    (\x-0.05,\y,0.9) -- (\x-0.05,\y,0.15);
  \draw[black, line width=1.5pt]
    (\x+0.05,\y,0.9) -- (\x+0.05,\y,0.15);
  % Single arrowhead (lower)
  \draw[->, black, line width=1.5pt]
    (\x,\y,0.15) -- (\x,\y,0.05);
}

  % --- GENERATORS & RELATIONS for blue module (z = 0) ---
  \foreach \x/\y in {0/1, 1/0} {
    \fill[blue] (\x,\y,0) circle (2.5pt);
  }
  \foreach \x/\y in {1.9 / 1.9, 5/0, 5/1} {
    \node[rectangle, draw=black, fill=blue, inner sep=1.5pt] at (\x,\y,0) {};
  }

  % --- GENERATORS & RELATIONS for red module (z = 1) ---
  \foreach \x/\y in {2/2} {
    \fill[red] (\x,\y,1) circle (2.5pt);
  }
  \foreach \x/\y in {6/2} {
    \node[rectangle, draw=black, fill=red, inner sep=1.5pt] at (\x,\y,1) {};
  }

    % Axes (x and y only)
  \draw[->, thick] (-0.1,-0.1,0) -- (5.4,-0.1,0) node[below] {$x$};
  \draw[->, thick] (-0.1,-0.1,0) -- (-0.1,5.4,0) node[above] {$y$};
  
\end{tikzpicture}
\caption{A homomorphism. The red module has thickness 1, the blue module thickness 2.}
\label{fig:hom1}
\end{figure}
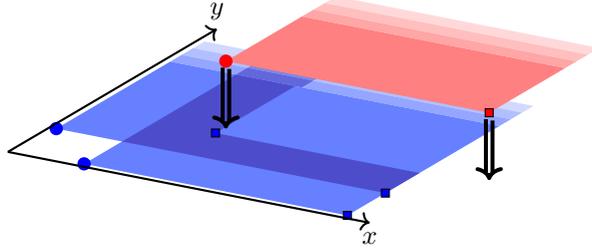

\subparagraph{Unique lifts.}
Our main contribution is to reduce the number of variables in \autoref{eq:naive}. 
Let $\dots\oto{d_2} F_1 \oto{d_1} F_0 \oto{d_0} X \to 0$ and $\dots\oto{e_2} F'_1 \oto{e_1} F'_0 \oto{e_0} Y \to 0$ be \emph{minimal} free resolutions of $X, Y$.
\begin{definition}
For each $i \in \N$ the $i$-th \emph{Syzygy}-module of $Y$ is
$ \Omega^{i}(Y) \coloneqq \Img(e_i) = \ker(e_{i-1})$.
In particular, $\Omega^{0}(Y) = Y$ and there are surjections $\bar e_i \colon F'_i \onto \Omega^i(Y)$.
\end{definition}

\begin{definition}\label{def:restriction_system}
Let $p \colon Y \onto X$ be a surjection and $B \colon (\Z^d)_\text{obj} \to \Set$ a $\Z^d$-graded set. A $B$-\emph{restriction system} for $p$ is a collection of sub vector spaces $V^b \subset Y_{\deg(b)}$ indexed by $b \in B$ such that $(p_{\deg(b)})_{|V^b} \colon V^b \to X_{\deg(b)}$ is surjective. It is called \emph{sharp} if $(p_{\deg(b)})_{|V^b} $ is an isomorphism.
\end{definition}

It follows almost immediately from the definition that every map $A[B] \to X$ has a unique lift along $p$. This allows for a stronger version of the Fundamental Lemma of Homological Algebra.

\ignore{
\begin{definition}\label{def:restriction_system}
    For each $i \in \N$, let $B_i \subset F_i$ be a basis of the free modules in the resolution of $X$. 
    A $(B_i)_{i \in \N}$-\emph{restriction system} for $F'_*$ is an $\N$-indexed set of restriction systems $\left( \left( V^b \right)_{b \in B_i} \right)_{i \in \N}$ for the induced maps $\bar e_i \colon F'_i \onto \Omega^i(Y)$.
    It is \emph{sharp} if all $(V^b)_{b \in B_i}$ are sharp.
\end{definition}
}

\begin{tcolorbox}[thmbox]
\begin{restatable}[]{theorem}{mainthm}\label{thm:main}
Let $(A[B_i], d_i)$ and $(A[C_i], e_i)$ be free resolutions of $X$ and $Y$. 

For each $i$ and $b \in B_i$, there are subsets $C_i^b \subset C_i$ of size $\dim_\K \Omega^{i}(Y)_{\deg(b)}$, such that every homomorphism $f \colon X \to Y$ has a unique lift $f_i \colon A[B_i] \to A[C_i]$ 
satisfying
\[ f_i(b) \in A[C^b_i] \subset A[C_i]\]
\end{restatable}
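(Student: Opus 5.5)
The plan is to build, for every $i$, a sharp $B_i$-restriction system (\autoref{def:restriction_system}) for the surjection $\bar e_i \colon A[C_i] \onto \Omega^i(Y)$. The subsets $C_i^b$ are then read off from it, and lifts are constructed inductively in $i$, with sharpness forcing uniqueness at each step. Throughout, $\Omega^0(Y)=Y$ and $\bar e_0=e_0$.

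\emph{Step 1: choosing $C_i^b$.} Fix $i$ and $b\in B_i$, and write $\beta=\deg(b)$. Because $A[C_i]$ is $\Z^d$-graded free, its degree-$\beta$ component has as $\K$-basis the elements $x^{\beta-\deg(c)}c$ for those $c\in C_i$ with $\deg(c)\le\beta$. Their images under $\bar e_i$ span $\Omega^i(Y)_\beta$, since $\bar e_i$ is surjective. We extract a basis from this spanning set, for instance greedily by column reduction, and let $C_i^b$ be the corresponding set of generators. Then $|C_i^b|=\dim_\K\Omega^i(Y)_\beta$. Put $V^b\coloneqq A[C_i^b]_\beta\subset A[C_i]_\beta$. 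The key observation is that $A[C_i^b]_\beta$ is spanned exactly by the chosen elements $x^{\beta-\deg(c)}c$, because each generator contributes one monomial in each degree above it. Hence $(\bar e_i)_\beta$ restricted to $V^b$ is an isomorphism, so $(V^b)_{b\in B_i}$ is sharp. For a homogeneous element $z$ of degree $\beta$, the condition $z\in A[C_i^b]$ is the same as $z\in V^b$.

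\emph{Step 2: existence and uniqueness by induction.} By a lift we mean maps $f_i$ with $e_0f_0=f\circ d_0$ and $e_if_i=f_{i-1}d_i$ for $i\ge1$. Since the $A[B_i]$ are free, $f_i$ is determined by the values $f_i(b)$, which must be homogeneous of degree $\deg(b)$.
\begin{itemize}
\item For $i=0$: set $y_b\coloneqq f(d_0(b))\in Y_{\deg b}$. There is exactly one $z\in V^b$ with $e_0(z)=y_b$, and we define $f_0(b)\coloneqq z$.
\item For $i\ge1$: assume $f_{i-1}$ is constructed and unique. The element $w_b\coloneqq f_{i-1}(d_i(b))$ lies in $\ker e_{i-1}=\Omega^i(Y)$, because
\[e_{i-1}f_{i-1}d_i=f_{i-2}d_{i-1}d_i=0,\]
with $f\,d_0d_1=0$ used when $i=1$. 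Let $f_i(b)$ be the unique preimage of $w_b$ under $\bar e_i$ in $V^b$.
\end{itemize}
For uniqueness, any other lift $f'_i$ satisfying the support condition has $f'_{i-1}=f_{i-1}$ by induction. So $f'_i(b)$ is an element of $V^b$ mapping to the same $w_b$, and sharpness gives $f'_i(b)=f_i(b)$.

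\emph{Expected obstacle.} The delicate point is Step 1, specifically the identification $A[C_i^b]_{\deg b}=V^b$. It relies on the fine $\Z^d$-grading: each degree component of $A[-\deg c]$ is at most one-dimensional, which lets us choose whole generators rather than arbitrary elements. For a general poset one would invoke the embedding into $\Z^d$ mentioned in the introduction. Everything else is the standard comparison-theorem argument, with the sharp restriction system replacing the free choice of preimage.
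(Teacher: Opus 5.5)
Your argument is the paper's in substance. You build a sharp restriction system for each $\bar e_i$ by extracting a basis from the images of the elements $x^{\deg b-\deg c}c$ (Observation~\ref{obs:omega}), and then lift uniquely as in Lemma~\ref{lem:lifting}. You also write out two things the paper leaves implicit. One is the induction on $i$, using $f_{i-1}d_i(b)\in\ker e_{i-1}=\Img e_i$ so that the lemma applies to $\bar e_i$. The other is the equality $A[C_i^b]_{\deg b}=V^b$, which turns the support condition into membership in the restriction system and so gives uniqueness.

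The real difference is one of scope. You read $\Omega^i(Y)$ as $\Img e_i$ for the given resolution and never use minimality. The paper means the minimal syzygy and opens by assuming the resolution of $Y$ is minimal ``by choosing appropriate subsets of $C_i$''. The two readings agree when the resolution of $Y$ is minimal. For a non-minimal one, your version is the one that is true: that reduction is not available in general, and the minimal count can be too small.

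For instance, take the characteristic of $\K$ different from $2$ and resolve $Y=A/(x,y)$ as follows:
\begin{itemize}
\item $C_0=\{c_0\}$;
\item $C_1=\{u,v,w\}$ with $e_1(u)=xc_0$ and $e_1(v)=e_1(w)=yc_0$;
\item $C_2=\{s,t\}$ with $e_2(s)=v-w$ and $e_2(t)=yu+xv-2xw$.
\end{itemize}
No subsets of the $C_i$ span a minimal subcomplex. Lifting $\Id_Y$ from the Koszul resolution forces $f_1(b_x)=u$ and $f_1(b_y)\in\{v,w\}$. At $i=2$ the lift then needs both $xs$ and $t$, although $\dim_\K\Omega^2(Y)_{(1,1)}=1$; your count $\dim_\K(\Img e_2)_{(1,1)}=2$ is the right one.

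So your proof establishes the theorem for arbitrary free resolutions with $|C_i^b|=\dim_\K(\Img e_i)_{\deg b}$. This is the stated count exactly when the resolution of $Y$ is minimal.
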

\end{tcolorbox}

\autoref{thm:main} implies a good bound for the size of the space of homomorphisms. By counting the number of all maps from $A[B_i]$ to $A[C]$ satisfying the condition it holds that

\begin{corollary}\label{cor:hom_size}
    \[\dim_K \Hom(X,Y) \leq b_0(X) \thick{Y}.\]
\end{corollary}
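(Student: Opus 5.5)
The plan is to apply \autoref{thm:main} in degree $i=0$ and show that the resulting map from $\Hom(X,Y)$ to a space of constrained matrices is injective. Take the free resolutions in \autoref{thm:main} to be minimal, so that $|B_0| = b_0(X)$. For each $b \in B_0$ the theorem gives a subset $C_0^b \subset C_0$ with $|C_0^b| = \dim_\K \Omega^0(Y)_{\deg(b)} = \dim_\K Y_{\deg(b)} \leq \thick{Y}$. It also gives a well-defined assignment $f \mapsto f_0$, since the lift $f_0$ satisfying $f_0(b) \in A[C_0^b]$ is unique.

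The first step is to check that this assignment is $\K$-linear and injective. \textbf{Linearity:} if $f_0$ and $g_0$ are the distinguished lifts of $f$ and $g$, then $\lambda f_0 + \mu g_0$ lifts $\lambda f + \mu g$ and still sends each $b$ into the subspace $A[C_0^b]$. By uniqueness it is therefore the distinguished lift of $\lambda f + \mu g$. \textbf{Injectivity:} $f$ is recovered from $f_0$, because $e_0 f_0 = f d_0$ and $d_0$ is surjective. So $f_0 = 0$ forces $f = 0$.

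It remains to bound the dimension of the target space
\[ L \coloneqq \{\, \phi \colon A[B_0] \to A[C_0] \text{ graded} \;:\; \phi(b) \in A[C_0^b] \text{ for all } b \in B_0 \,\}. \]
A graded map out of a free module is determined freely by the images of the basis elements, and each image $\phi(b)$ lies in the degree-$\deg(b)$ component of the corresponding free module. Hence
\[ L \cong \bigoplus_{b \in B_0} A[C_0^b]_{\deg(b)}. \]

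The main obstacle is that $\dim_\K A[C_0^b]_{\deg(b)}$ can be larger than $|C_0^b|$. Each $c \in C_0^b$ with $\deg(c) \leq \deg(b)$ contributes exactly one dimension, spanned by the monomial $x^{\deg(b)-\deg(c)} c$, and each $c$ with $\deg(c) \not\leq \deg(b)$ contributes nothing. Since $A[C_0^b] = \bigoplus_{c \in C_0^b} A[-\deg(c)]$ and $\Z^d$-graded components of $A$ are at most one-dimensional, we get $\dim_\K A[C_0^b]_{\deg(b)} \leq |C_0^b|$.

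Combining the steps,
\[ \dim_\K \Hom(X,Y) \leq \dim_\K L \leq \sum_{b \in B_0} |C_0^b| \leq |B_0| \cdot \thick{Y} = b_0(X)\,\thick{Y}. \]

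If one prefers not to assume minimality of the resolution of $X$, the same count gives the bound $|B_0|\,\thick{Y}$ for an arbitrary free presentation, and choosing a minimal one yields the stated inequality.
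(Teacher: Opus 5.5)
Your proposal is correct and matches the paper's argument: the paper derives the corollary from \autoref{thm:main} at $i=0$ by counting the maps $A[B_0]\to A[C_0]$ with $f_0(b)\in A[C_0^b]$, each generator contributing at most $|C_0^b|=\dim_\K Y_{\deg(b)}\le\thick{Y}$ dimensions. You also make explicit two points the paper leaves implicit, namely that $f\mapsto f_0$ is $\K$-linear and injective, which the dimension bound needs.
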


Computing in \autoref{eq:naive} only those $M\to N$ satisfying this condition results in \autoref{alg:restricted}.

\begin{theorem}\label{thm:alg_a}
Given presentation matrices of $X$ and $Y$ with at most $n$ rows and columns,
    \autoref{alg:restricted} computes a basis of $\ \Hom(X, \, Y)$ in time 
$\Oc( n^4 \left( \thick{Y} + \thick{  \Omega^{1}(Y) } \right)^2 + T_{\ker}(d, n))$ time.
\end{theorem}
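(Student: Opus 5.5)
The plan is to carry out the reduction of \autoref{eq:naive} that \autoref{thm:main} allows, and then count variables and equations. First, the algorithm needs the restriction systems, i.e.\ the subsets $C_0^b \subset G'$ for every generator $b \in G$ and $C_1^r \subset R'$ for every relation $r \in R$. For $i=1$ this needs a minimal presentation of $\Omega^1(Y)=\ker(A[G']\to Y)$, or at least the relations of $Y$ mapped onto the syzygy module. So the first step computes $\ker N$ (or a minimal presentation), which accounts for the term $T_{\ker}(d,n)$.

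To choose $C_0^b$, evaluate the presentation $N$ at the degree $\deg(b)$. The space $Y_{\deg(b)}$ is the cokernel of $N$ restricted to the columns of degree $\le \deg(b)$, mapped into the rows of degree $\le \deg(b)$. A column echelon reduction of this submatrix leaves non-pivot rows, and these form a basis of the cokernel. There are exactly $\dim Y_{\deg(b)} \le \thick{Y}$ of them, and we take them as $C_0^b$; this is a sharp restriction system. The same procedure with the presentation of $\Omega^1(Y)$ gives $C_1^r$ with $|C_1^r|\le \thick{\Omega^1 Y}$. Done naively per degree this costs $\Oc(n)$ reductions of $\Oc(n^3)$ each, hence $\Oc(n^4)$. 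That is within budget, but I would try to reuse one sweep or ordering so that it does not dominate.

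Next, set up the restricted system $QM=NP$, where column $b$ of $Q$ is supported on $C_0^b$ and column $r$ of $P$ is supported on $C_1^r$. The unknowns then number at most $n\thick{Y}+n\thick{\Omega^1Y}=n(\thick{Y}+\thick{\Omega^1 Y})=:nt$. The equations are indexed by pairs (relation $r\in R$, row $g'\in G'$), which gives at most $n^2$ of them. Gaussian elimination on an $n^2\times nt$ system costs $\Oc(n^2\cdot (nt)^2)=\Oc(n^4t^2)$, which is the claimed bound. Setting up each equation's coefficients (entries of $N$ and $M$ restricted to the supports) fits in the same budget.

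For correctness, \autoref{thm:main} with $i=0,1$ shows that every $f\in\Hom(X,Y)$ has a unique pair $(Q,P)$ in the restricted system. The map from solutions to $\Hom(X,Y)$ is therefore bijective: surjective by existence and injective by uniqueness. Hence a basis of the solution space, read off as the $Q$-parts, is a basis of $\Hom(X,Y)$ with no quotienting by null-homotopic maps. This is the key advantage over the naive approach.

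The main obstacle is the mismatch between the theorem, which is stated for (minimal) free resolutions, and the algorithm's input, which is possibly nonminimal presentations with the $P$-side living on $A[R']$ rather than on a basis of $\Omega^1$. I would first minimize both presentations, at $\Oc(n^3)$ cost, and replace $R'$ by the kernel generators so that the uniqueness statement applies verbatim. I also need to verify that the restriction subsets chosen by pivots genuinely give restriction systems at degree $\deg(b)$ for the surjections $\bar e_0,\bar e_1$.
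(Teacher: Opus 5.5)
Your proposal is correct and is essentially the paper's proof. Correctness comes from existence and uniqueness of restricted lifts in \autoref{thm:main} for $i=0,1$. The runtime comes from $\Oc(n)$ local cokernel computations, the computation of $\ker N$, and Gaussian elimination on at most $n^2$ equations in at most $n(\thick{Y}+\thick{\Omega^1 Y})$ unknowns.

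The ``obstacle'' in your last paragraph is not one. The paper assumes the input presentations are minimal, and sharp subsets exist for any presentation by \autoref{obs:omega}. Also, $f_1$ in \autoref{thm:main} already lands in $A[R']$, so $R'$ should be kept; $\ker N$ is used only to select $C_1^r \subset R'$.
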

$T_{\ker}(d,n)$ is the time to compute the kernel of a map between free modules of rank at most $n$.

By controlling the lift of $f$ with \autoref{thm:main} to the free covers only, we construct \autoref{alg:mixed}. We already used this algorithm in experiments with \textsc{aida} \cite[Table 3]{djk} without proof of correctness; \autoref{thm:main} closes this gap.

 \subparagraph{Classical Approach}  
  Green, Heath, and Struble in \cite{ghs01} present an ansatz to compute homomorphisms between modules over an arbitrary algebra using the right-exactness of the tensor-product.
We show that this approach, when specialised to persistence modules, also yields a fast algorithm if $\thickb{Y}$ is small. For this paper, we will call it \autoref{alg:hom_exact}.
It has a slightly better theoretical time-complexity than \autoref{alg:restricted} in $\Oc\left( n^3 \thickb{Y}^3 + n^4 \right)$ but has a larger overhead.

\subparagraph{Applications and Experiments.}
\autoref{alg:mixed} accelerated \textsc{aida} on some instances in previous experiments \cite{djk}, but the asymptotic worst case run-time is not better than the direct approach.
\autoref{alg:hom_exact}, the classical approach, has the best worst-case complexity and using it in \textsc{aida} results in

\begin{tcolorbox}[thmbox]
\begin{restatable}[]{corollary}{cor_aida}\label{cor:aida}
    Using \autoref{alg:hom_exact}, the runtime of \textsc{AIDA} on a minimal presentation matrix of size $n \times n$ is in $\Oc \left( n^{\omega +1} \left(   n +  k^{\omega -1}(k+\lthick{X})q^{k^2/4 + \Oc(k)} + \sum_{Y \in \text{Ind}(X)} \thickb{Y}^\omega  \right) \right)$.
\end{restatable}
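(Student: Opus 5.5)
The plan is to take the runtime analysis of \textsc{aida} from \cite[Section 5]{djk_arXiv} as given and to replace only the cost of its homomorphism subroutine by the bound for \autoref{alg:hom_exact}, namely $\Oc(n^3\thickb{Y}^3+n^4)$. In the matrix-multiplication regime this becomes $\Oc(n^{\omega}\thickb{Y}^{\omega}+ n^{\omega+1})$. First I will recall the structure of \textsc{aida}. It processes the columns (relations) of the minimal presentation in degree order, at most $n$ steps. At each step it maintains a decomposition of the current truncated module into indecomposable summands $Y$. It then calls three things: a homomorphism computation $\Hom(\cdot, Y)$ between the summands touched by the new relation, a linear system reduction, and an exhaustive search over subspaces. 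The search contributes the factor $k^{\omega-1}(k+\lthick{X})q^{k^2/4+\Oc(k)}$, where $k$ is the number of summands touched and $q=|\K|$. I treat the last two costs as already bounded in \cite{djk_arXiv} and take them over verbatim.

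The key step is bounding the total homomorphism cost. At a given step the homomorphisms are computed from (generator-sized) pieces of the presentation into summands $Y$. The domain has size at most $n$, and $\thickb{Y}$ is bounded by the thickness of the corresponding final indecomposable summand. Summing the per-call cost $n^{\omega}\thickb{Y}^{\omega}+n^{\omega+1}$ over all summands that appear is the part needing care.

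I will charge each call to a summand $Y\in\text{Ind}(X)$ of the final decomposition. The argument is that summands only merge as the algorithm proceeds, and thickness is monotone under the merging, via direct-sum/quotient comparison at each degree. Each final summand therefore receives at most $n$ calls, one per column, each of cost $\Oc(n^{\omega}\thickb{Y}^{\omega})$. This yields $n^{\omega+1}\sum_{Y}\thickb{Y}^{\omega}$.

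The $n^4$-type overhead of \autoref{alg:hom_exact} per step, summed over $n$ steps, together with the base reductions, gives the $n^{\omega+1}\cdot n$ term. The three contributions add to the stated bound.

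The main obstacle is justifying that intermediate summands never have thickness exceeding that of a final summand, so that the sum over intermediate summands can be replaced by the sum over $\text{Ind}(X)$. Each step adds relations, which can split summands. It can also merge summands, and the thickness of a merged summand is at most the sum of the parts. If monotonicity fails, I would instead bound the number of distinct summands alive at any time by $n$ and use superadditivity of $t\mapsto t^{\omega}$. Since $\sum$ over merged parts of $\thickb{\cdot}^{\omega}\le(\sum\thickb{\cdot})^{\omega}$, the cost of the parts is dominated by the cost of the eventual merged summand. This again gives the charge to final summands, with thickness evaluated pointwise and bounded by $\thickb{Y}$.
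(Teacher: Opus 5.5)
Your proposal has a genuine gap, and it is at exactly the step you flag.

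\textbf{Thickness is not monotone along \textsc{aida}.} The blocks $X_b$ that \textsc{aida} maintains while processing the column group of degree $\alpha$ are indecomposable summands of a $U$-cover of $X$, for an upset $U$ in which $\alpha$ is minimal. They agree with $X$ only on $\R^d\setminus U$. On $U$, the final module is a \emph{quotient} of $\bigoplus_b X_b$. So a block can be thicker than the final summand it ends up in. For example, take generators $g_0,h_1,h_2$ in degrees $(1,1),(0,2),(2,0)$ and relations $h_1-g_0$, $h_2-g_0$, $h_1-h_2$ in degrees $(1,3),(3,1),(2,2)$, processed in this (compatible) order. Before the last column the block is indecomposable and $3$-dimensional at $(2,2)$. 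The final module is indecomposable of thickness $2$.

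\textbf{Your fallback runs the wrong way.} From $\sum_i\thickb{Z_i}^\omega\le(\sum_i\thickb{Z_i})^\omega$ you could only get domination by $\thickb{Y}^\omega$ if $\sum_i\thickb{Z_i}\le\thickb{Y}$. What you actually have is the reverse, $\thickb{Y}\le\sum_i\thickb{Z_i}$, and not even that over $U$.

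\textbf{The missing idea.} The paper never uses $\thickb{X_c}$. By \autoref{cor:thickness_area}, \autoref{alg:hom_exact} applied to $\Hom(X_b,X_c)$ only sees $\dim(X_c)_\beta$ at degrees $\beta$ of generators and relations of $X_b$. The relations are already processed, hence outside $U$. A touched block has no generator in $U$, by $U$-projectivity and minimality. So only $\mathfrak{t}^{\R^d\setminus U}(X_c)$ enters, and on $\R^d\setminus U$ the blocks sum to $X$.

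Two further steps fail as written.

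\textbf{The call counting.} At each step \textsc{aida} computes $\Hom(X_b,X_c)$ for all pairs $b\neq c$ of touched blocks. A target therefore receives many calls per step, not one per column, and the overhead term of \autoref{alg:hom_exact} is per call, not per step. With your uniform per-call cost $n^\omega\thickb{Y}^\omega+n^{\omega+1}$ you lose polynomial factors. The paper instead writes the cost of a pair in terms of the block sizes, as $n_b^\omega\,\mathfrak{t}^{\R^d\setminus U}(X_c)^\omega+n_b n_c^\omega$, and sums using $\sum_b n_b\le n$ and $\sum_b n_b^\omega\le n^\omega$.

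\textbf{The middle term.} Here $k$ is the maximal number of relations of the same degree, i.e.\ the size of a column group, and $q^{k^2/4+\Oc(k)}$ counts subspaces of $\K^k$. It is not the number of touched summands. Nor can the middle term be taken verbatim from \cite{djk_arXiv}: its factor $\lthick{X}$ is new. It comes from \autoref{cor:hom_size}, which improves the bound on the number of variables of the linear system solved in each subspace iteration to $n\,\mathfrak{t}^{\R^d\setminus U}(X_c)+kn_c$. This gives $\sum_c (kn_c)^{\omega-1}\bigl(n\,\mathfrak{t}^{\R^d\setminus U}(X_c)+kn_c\bigr)\le n^\omega k^{\omega-1}(\lthick{X}+k)$ per iteration and step.
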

\end{tcolorbox}
$\omega$ is the matrix multiplication constant and $k$ the maximal number of relations of the same degree.

 We tested all three algorithms on benchmark datasets. Surprisingly, we observe that for practically appearing presentations, \autoref{alg:mixed} is often actually the fastest of the three algorithms. If we do not need a basis for $\Hom(X,Y)$, but just a set of generators, then for small modules, the direct approach can keep up with the new algorithms. This explains why we did not observe a more significant speed up of \textsc{aida} in the experiments in \cite[Table 3]{djk}.

\subparagraph{Duality.}
In Asashiba's construction \cite{ASASHIBA2025107905} it is not the target module, but the domain which has low thickness. 
Therefore, we also need to be able to compute homomorphisms faster in this case.
We utilised the ideas and results of Bauer, Lenzen, and Lesnick \cite{BLL23}, Miller \cite{miller00, Miller2001} and Lenzen \cite{lenzen24}
to dualise \autoref{alg:restricted} and \autoref{alg:hom_exact} resulting in the algorithms \autoref{alg:restricted_dual} and \autoref{alg:exact_dual}. Implicitly, we also get a dual version of \autoref{alg:mixed}.

\subparagraph{Sparsity and Thickness.}
The algorithms we will present often run faster in practice than the theoretical bounds would suggest because the persistent homology group of classical filtered simplicial complexes has a sparse presentation in practice. We store the presentation matrix in a list-of-columns format, and this is why we generally use column-reduction whenever possible.

To improve the analysis of our algorithms, we wanted to quantify the sparsity of presentation matrices in relation to the modules that they represent. In \cite[Section 6]{djk_arXiv} we used that interval modules have a minimal presentation with at most $2$ non-zero entries per column to show that interval decomposable modules can be decomposed fast. We extend this observation (\cite[Proposition 6.13]{djk_arXiv}) to all persistence modules.

\begin{proposition}\label{prop:thickness_sparsity}
Every persistence module $X$ has a presentation with at most $\thick{X}+1$ non-zero entries per column.
\end{proposition}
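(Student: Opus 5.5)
Fix a minimal presentation $A[R] \oto{M} A[G] \oto{\pi} X \to 0$, so $G$ maps onto a basis of generators and the columns of $M$ generate $\ker \pi = \Omega^1(X)$. The idea is to replace this generating set of $\Omega^1(X)$ by another one whose elements are all short. For a relation $r$ with degree $\alpha = \deg(r)$, the column $M(r)$ is an element of $A[G]_\alpha$. It expresses a linear dependence among the images $x^{\alpha - \deg(g)}\pi(g) \in X_\alpha$, taken over those $g$ with $\deg(g) \le \alpha$. Since $\dim X_\alpha \le \thick{X}$, it should be possible to rewrite every relation in $\Omega^1(X)_\alpha$ as a combination of relations that each involve at most $\thick{X}+1$ generators.

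First I would settle the linear-algebra step at a single degree. Let $S_\alpha = \{g \in G : \deg(g) \le \alpha\}$. The vectors $v_g = x^{\alpha-\deg(g)}\pi(g)$ for $g \in S_\alpha$ live in $X_\alpha$ and span a space of dimension at most $\thick{X}$. Choose a maximal linearly independent subset $I \subset S_\alpha$, so $|I| \le \thick{X}$. Every $h \in S_\alpha \setminus I$ then has a unique expression $v_h = \sum_{g\in I} c_g v_g$. This gives a relation $\rho_h = x^{\alpha-\deg(h)}h - \sum_{g \in I} c_g x^{\alpha-\deg(g)} g$ with at most $|I|+1 \le \thick{X}+1$ nonzero coefficients. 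These $\rho_h$ form a basis of $\ker(\pi_\alpha) = \Omega^1(X)_\alpha$, since the kernel of $\K^{S_\alpha} \to X_\alpha$ has a basis of fundamental circuits with respect to $I$. I would therefore take the presentation whose relations are all such $\rho_h$, placed at degree $\alpha$.

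Second, I need a finite set of degrees. The module $\Omega^1(X)$ is finitely generated, and the degrees of its minimal generators are the degrees of the columns of $M$. At each such degree $\alpha$, the $\rho_h$ span $\Omega^1(X)_\alpha$, and in particular they contain every element of $\Omega^1(X)_\alpha$ in their span. So the union of the $\rho_h$ over the finitely many column degrees $\alpha$ of $M$ generates $\Omega^1(X)$. Taking these as the columns of a new graded matrix $M'$ gives a presentation $A[R'] \oto{M'} A[G] \to X \to 0$ in which every column has at most $\thick{X}+1$ nonzero entries. The presentation is no longer minimal, but the statement does not ask for minimality.

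The main subtlety is bookkeeping, not any real difficulty. In the graded-matrix convention each entry is a scalar, with the monomial determined by the degrees, so counting nonzero entries is the same as counting the generators involved. I also need the maximum in \autoref{def:thickness} to bound $\dim X_\alpha$ at these particular degrees, and it does. If one insists on the $\Z^d$-graded finite setting rather than $\R^d$, nothing changes.
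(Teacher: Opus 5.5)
Your proof is correct, but it is organised differently from the paper's.

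The paper argues by induction over the relation degrees. With the earlier relations $M$ already sparse, it takes the $k$ relations $N$ of a new degree $\omega$. It chooses $G_\omega \subset G$ of size $\dim_\K X_\omega + k$, giving a sharp restriction system for $A[G] \onto \coker M$ at $\omega$. It then lifts the induced map $A^k[-\omega] \to \coker M$ into $A[G_\omega]$, so the new columns differ from $N$ only by elements of $\Img M$. Finally it puts this $(\dim_\K X_\omega + k) \times k$ matrix into reduced column echelon form, so each column has its pivot plus at most $\dim_\K X_\omega$ further entries. A cited result from the \textsc{aida} paper then ensures that $[M \ \bar N]$ still presents $X$.

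You avoid the induction, the lifting lemma and the citation entirely. At each relation degree $\alpha$ you take a whole basis of $\Omega^1(X)_\alpha$ made of fundamental circuits, and generation of $\ker \pi$ is then immediate. The linear-algebra core is nevertheless the same. The paper's echelon columns are exactly your $\rho_h$, with $h$ a pivot generator and $I$ the set of non-pivot generators in $G_\omega$, which map to a basis of $X_\omega$.

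What the paper's bookkeeping buys is that the replacement is column for column, so a minimal presentation stays minimal. That is the form in which the paper's algorithms take their input. Your presentation has $\sum_{\alpha \in \deg(R)} \dim_\K \Omega^1(X)_\alpha$ columns, which is typically far more than $|R|$.

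You can recover minimality inside your own framework. At each $\alpha$, keep only those $\rho_h$ whose classes form a basis of $\Omega^1(X)_\alpha$ modulo the part generated in degrees strictly below $\alpha$. This is possible because the $\rho_h$ span $\Omega^1(X)_\alpha$. Induction along a linear extension of the relation degrees shows that these still generate $\Omega^1(X)$, and there are exactly $|R|$ of them.
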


\subsection*{Datastructures}

\subparagraph{Input of our algorithms.}
For $2$-parameter modules, minimising a presentation matrix can be done via column-reduction in $\Oc(n^3)$ using the LWKR algorithm \cite{lw-computing, mpfree}. In $3$ or more parameters one can use Schreyer's algorithm \cite{schreyer} or the faster but unpublished work of Bender, Gävfert, and Lesnick \cite{kernels}.
We therefore assume the input to all of our algorithms to be \emph{minimal} presentation matrices $M \in \K^{G \times R}$ and $N \in \K^{G' \times R'}$.
We will denote their sizes throughout the paper by $m \coloneqq |G| + |R|$ and $n \coloneqq |G'| + |R'|$ 

\subparagraph{Output of our algorithms.}

Since we are given presentations, we also want the output of our algorithms to be compatible with this data structure. That is, we want a set of graded matrices $ \{Q_i \}_{i \in I} \subset \K^{G' \times G} $ such that each $Q_i$ induces a homomorphism of persistence modules $\widetilde Q_i \colon X \to Y$ and such that the subset $\{\widetilde Q_i\}_{i \in I} \subset \Hom(X, \, Y)$ generates - or better is a basis of - this vector space.

\ignore{
\section{Old Introduction}
In our work on decompositions of multiparameter persistence modules (MPM), that is functors $\R^d \to \vect_\K$, we encountered the need to compute the vector space of homomorphisms between (indecomposable) modules:
\begin{itemize}
    \item 
To decompose a module $X$, classical algorithms \cite{CGK97, LuxSzoke} would 
compute the Endomorphism-algebra $\End(X,X)$ and decompose, instead, this algebra. To see how one would compute the underlying 
vector space $\Hom(X,Y)$ naively, consider two modules $X,Y$ which, in practice,
 are given to us as free presentations $F_1 \oto{d} F_0 \to X$ and $F_1' \oto{e} F'_0 \to  Y$ \cite{lw-interactive, mpfree}. We can then compute the vector space of morphisms of presentations, $\Hom(F_*, F'_*)$, as the set of maps $p,q$ which make the following diagram commute.
% https://q.uiver.app/#q=WzAsNixbMCwwLCJGXzEiXSxbMSwwLCJGXzAiXSxbMCwxLCJGXzEnIl0sWzEsMSwiRl8wJyJdLFsyLDAsIlgiXSxbMiwxLCJZIl0sWzAsMSwiZCJdLFsyLDMsImUiXSxbMCwyLCJwIl0sWzEsMywicSJdLFsxLDRdLFszLDVdLFs0LDUsIiIsMSx7InN0eWxlIjp7ImJvZHkiOnsibmFtZSI6ImRvdHRlZCJ9fX1dXQ==
\[\begin{tikzcd}[ampersand replacement=\&]
	{F_1} \& {F_0} \& X \\
	{F_1'} \& {F_0'} \& Y
	\arrow["d", from=1-1, to=1-2]
	\arrow["p", from=1-1, to=2-1]
	\arrow[from=1-2, to=1-3]
	\arrow["q", from=1-2, to=2-2]
	\arrow[dotted, from=1-3, to=2-3]
	\arrow["e", from=2-1, to=2-2]
	\arrow[from=2-2, to=2-3]
\end{tikzcd}\]
After choosing bases $B_0, B_1, B_0', B_1'$ for the free modules, these maps become \emph{graded matrices}. Finding the maps $p,q$ is therefore the same as solving, for graded matrices $P,Q$, the \emph{linear} equation
 \begin{equation}\label{eq:naive}
 Q[d]^{B_1}_{B_0} - [e]^{B_1'}_{B_0'}P = 0 \quad \text{ for } \ Q \in \K^{B_0' \times B_0}, \ P \in \K^{B_1' \times B_1}.
 \end{equation}
Then by deleting those morphisms of presentations which induce the $0$-map, one gets a basis for $\Hom(X,Y)$ coordinatised in the chosen set of generators $B_0$ and $B_0'$.
\item We realised that in the \emph{generalized persistence algorithm} \cite{DeyXin} a subroutine actually contained the linear system \autoref{eq:naive} for two \emph{indecomposable} modules $X,Y$. We showed that one can instead also compute the (smaller) vector space $\Hom(X,Y)$ which paved the way for many parts of the general decomposition algorithm \textsc{aida} \cite{djk}.
\end{itemize}

Using the Gauss-algorithm, solving \autoref{eq:naive} takes $\Oc(n^6)$ time. We will ignore fast matrix multiplication in this paper. Not only for clarity, but also because it does not play a role in practice, where our matrices are always \emph{sparse} and the actual time needed for reducing a $c \times c$ matrix is between $\sim c^1$ and $\sim c^2$.
This computation of homomorphisms is, both theoretically and practically, 
the computational bottleneck in both the classical decomposition algorithms, in the \emph{generalized persistence algorithm}, and in \textsc{aida}. This prompted us to look for ways to compute 
$\Hom(X,Y)$ faster than in $\Oc(n^6)$ time.

 \subparagraph{Thickness.}

We tested \textsc{aida} on the persistent homology of typical \emph{density-scale} bi-filtrations (density-rips \cite{CZMP, BMT17} , degree-rips \cite{lw-interactive}, and multicover \cite{Sheehy12, EO18, CKLO23}), which can be used to estimate the homology stably with respect to outliers (\cite{blumberg_lesnick}). This revealed something surprising about the indecomposable modules that arose here as summands. Most were \emph{pointwise low-dimensional} or at least had large regions of pointwise dimension 1 \autoref{fig:hypoxic}. Additionally, most summands are very small ("noise") and of dimension $1$, which for $2$ parameters is equivalent to being an interval-module. 

\begin{figure}[H]
    \centering
    \begin{minipage}[t]{0.48\textwidth}
    \vspace{0pt}
        \centering
        \includegraphics[width=\textwidth]{pictures/hypoxic_FOX3.png}
    \end{minipage}
    \hfill
    \begin{minipage}[t]{0.48\textwidth}
    \vspace{0pt}
        \centering
        \includegraphics[width=0.48\textwidth]{pictures/hypoxic_comp_1.jpg}
        \hfill
        \includegraphics[width=0.48\textwidth]{pictures/hypoxic_comp_2.jpg}
        
        \vspace{0.5em}
        
        \includegraphics[width=0.48\textwidth]{pictures/hypoxic_comp_3.jpg}
        \hfill
        \includegraphics[width=0.48\textwidth]{pictures/hypoxic_comp_4.jpg}
    \end{minipage}
    \caption{Hilbert function of $H_1$ of a density-delaunay filtration of the \emph{Large Hypoxic Regions} (FOXP3) dataset from \cite{Vipond21} on a $200 \times 200$ grid (left). Hilbert functions of its 4 largest indecomposable summands (right). This module has 5917 generators, and 74\% of these belong to an interval-summand}
    \label{fig:hypoxic}
\end{figure}

In contrast, at small scale-parameters, many short lived homology classes appear and are glued 
together by simplices appearing at larger parameter pairs. 
This creates a large strip where a few indecomposable summands
 have pointwise very high dimension. We therefore picture these $2$-parameter 
 persistence modules as consisting of thin layers with crumbled edges, 
 sometimes interwoven, like the sheets in \autoref{fig:metal}.

We also already know from \cite{dey_xin} that, for interval modules, $\Hom(X,Y)$ can be computed in linear time over $\R^2$ and in quadratic time for more parameters. This led us to investigate if something similar can be done for modules of (mostly) \emph{low} dimension.
The maximal pointwise dimension of a module $X$ - or of the indecomposable summands of $X$ - has already appeared as an important parameter in the work of Bjerkevik (\cite[Theorem 5.4, Conjecture 6.1]{Bjerkevik2025}, and in our unpublished work on the skyscraper invariant \cite{HN_discr}, so we propose the following terminology.

\begin{definition}
    Let $X$ be a persistence module over $\R^d$. We define its \emph{thickness} as
    \[\thickb{X} \coloneqq \max\limits_{\alpha \in \R^d} \dim_\K X_\alpha\]
    and its \emph{layer-thickness} as the maximal thickness of its indecomposable summands
    \[ \lthickb{X} \coloneqq \max\limits_{Y \in \text{Indec}(X)} \thickb{Y}. \]
\end{definition}

Although we know (\cite{AKS24}) that we will almost surely never encounter 
interval-decomposable, i.e. layer-thickness-$1$ modules 
in the setting of these bifiltrations, they are apparently often not far 
from it in practice (cf \autoref{fig:hypoxic}) - For $H_0$ it is even proven that at least $25\% $ of the indecomposable summands are intervals \cite[Theorem 5.1]{AlonsoKerber23}.

To illustrate the setting, Figure~\ref{fig:hom1} shows a homomorphism between two low-thickness two-parameter persistence modules. 
\begin{figure}[H]
\centering
\tdplotsetmaincoords{70}{30}
\begin{tikzpicture}[tdplot_main_coords, scale=1]

  % ---- BLUE MODULE (z = 0) ----
  % base blue shape
  \fill[lightblue, opacity=1]
    (0,1,0) -- (0,4,0) -- (5,4,0) -- (5,0,0) -- (1,0,0) -- (1,1,0) -- cycle;
  \fill[darkblue, opacity=1]
    (1,1,0) -- (1,4,0) -- (1.9,4,0) -- (1.9,1.9,0) -- (5,1.9,0) -- (5,1,0) -- cycle;

    \foreach \yi/\opacity in {4/0.7, 4.3/0.5, 4.6/0.3, 5/0} {
      \pgfmathsetmacro{\yUpper}{ifthenelse(\yi+0.3 > 5, 5, \yi+0.3)}
      \fill[lightblue, opacity=\opacity]
        (0, \yi, 0) -- (0, \yUpper, 0) -- (5, \yUpper, 0) -- (5, \yi, 0) -- cycle;
      \fill[darkblue, opacity=\opacity]
        (1, \yi, 0) -- (1, \yUpper, 0) -- (2, \yUpper, 0) -- (2, \yi, 0) -- cycle;
    }
  
  % ---- RED MODULE (z = 1) ----
  \fill[lightred, opacity=1]
    (2,2,1) -- (2,4,1) -- (6,4,1) -- (6,2,1) -- cycle;

  \foreach \yi/\opacity in {4/0.7, 4.3/0.5, 4.6/0.3, 5/0} {
  \pgfmathsetmacro{\yUpper}{ifthenelse(\yi+0.3 > 5, 5, \yi+0.3)}
  \fill[lightred, opacity=\opacity]
    (2, \yi, 1) -- (2, \yUpper, 1) -- (6, \yUpper, 1) -- (6, \yi, 1) -- cycle;
    }
    
% Vertical homomorphism arrows with bold transparent double stroke
\foreach \x/\y in {2/2, 6/2} {
  % Two parallel lines for the shaft (no arrowheads)
  \draw[black, line width=1.5pt]
    (\x-0.05,\y,0.9) -- (\x-0.05,\y,0.15);
  \draw[black, line width=1.5pt]
    (\x+0.05,\y,0.9) -- (\x+0.05,\y,0.15);
  % Single arrowhead (lower)
  \draw[->, black, line width=1.5pt]
    (\x,\y,0.15) -- (\x,\y,0.05);
}

  % --- GENERATORS & RELATIONS for blue module (z = 0) ---
  \foreach \x/\y in {0/1, 1/0} {
    \fill[blue] (\x,\y,0) circle (2.5pt);
  }
  \foreach \x/\y in {1.9 / 1.9, 5/0, 5/1} {
    \node[rectangle, draw=black, fill=blue, inner sep=1.5pt] at (\x,\y,0) {};
  }

  % --- GENERATORS & RELATIONS for red module (z = 1) ---
  \foreach \x/\y in {2/2} {
    \fill[red] (\x,\y,1) circle (2.5pt);
  }
  \foreach \x/\y in {6/2} {
    \node[rectangle, draw=black, fill=red, inner sep=1.5pt] at (\x,\y,1) {};
  }

    % Axes (x and y only)
  \draw[->, thick] (-0.1,-0.1,0) -- (5.4,-0.1,0) node[below] {$x$};
  \draw[->, thick] (-0.1,-0.1,0) -- (-0.1,5.4,0) node[above] {$y$};
  
\end{tikzpicture}
\caption{A homomorphism. The red module has thickness 1, the blue module thickness 2.}
\label{fig:hom1}
\end{figure}

If the target module has low thickness, any generator of the domain can only be sent to a low dimensional vector space. Using this idea we modified the standard $\Oc(n^6)$ algorithm for computing homomorphisms of presentations. This accelerated \textsc{aida} in practice for some inputs (see \cite{djk}, Table~X). The resulting algorithm, which we call the \emph{mixed} algorithm (\autoref{alg:mixed}) here, is sometimes faster in practice, but its worst-case asymptotic complexity is still $\Oc(n^6)$. The main goal of this paper is to close this gap.

\subparagraph{Contributions.}

Let now, for full generality, 
$\dots \oto{ \ d_2 \ } F_1 \oto{ \ d_1 \ } F_0 \oto{ \ d_0 \ } X $ 
and $\dots \oto{ \ e_2 \ } F'_1 \oto{ \ e_1 \ } F'_0 \oto{ \ e_0 \ } Y $ 
be minimal free resolutions of persistence modules and $f\colon X  \to Y$ be a homomorphism.
To obtain a strictly faster algorithm, we investigated how much we can restrict the chain maps $F_* \to F'_*$ so that a lift of $f$ can always be found within this restriction. Recall that $\Omega^{i}(Y) \coloneqq \Img(e_i)$ denotes the $i$-th syzygie module of $Y$.

\begin{definition}
    Let $B_* \subset F_*$ be any basis. A set of vector subspaces
    $V^b_{i} \subset \left(F'_i\right)_{\deg(b)}$ for each $i \in \N$ and $b \in B_i$
     is called a \emph{restriction system} for the pair ($F_*, \, F_*'$) if, for each $i \in \N$ and $b \in B_i$ the map
     \[ \pi_i \colon F'_i\onto \Omega^{i}(Y) \]
     is still a surjection after restriction to $V^b_{i}$ at $\deg(b)$. It is called \emph{sharp} if this restriction  is an isomorphism.
\end{definition}

This lets us formulate a stronger version of the Fundamental Lemma of Homological Algebra for persistence modules.

\begin{theorem}\label{thm:main}
In the setting above, if $\{V^b_*\}_{b \in B_*}$ is a restriction system, 
then every homomorphism $f \colon X \to Y$ has a lift $f_* \colon F_* \to F'_*$ 
which satisfies for each $b \in B_*$ and each $i \in \N$ the condition
\[ f_i(b) \in V^b_i\]
and if the restriction system is sharp, then this lift is unique.
\end{theorem}

Applying this theorem directly yields \autoref{alg:restricted}, which reduces the run-time complexity of computing $\Hom(X, \, Y)$ to $\Oc\left( n^4 (\thickb{Y} + \thickb{\Omega^1 Y} ) \right)$.

 Computing homomorphisms of modules over finite dimensional algebras is a 
 classical problem in computational algebra. 
 When reviewing the literature we realised that the Ansatz presented by Green, Heath, and Struble 
 in \cite{ghs01} leads, in the special case of persistence modules, also to an algorithm 
which is faster than $\Oc(n^6)$ if $\thickb{Y}$ is small. We call this algorithm "\autoref{alg:hom_exact}" and it has a slightly better theoretical time-complexity of $\Oc\left( n^4 \thickb{Y}^2\right)$ but a larger overhead than \autoref{alg:restricted}.

\begin{corollary}
If $X$ is of size $n$ and has uniquely-graded relations, then \autoref{alg:hom_exact} improves the runtime of \textsc{AIDA} to $\Oc(n^4 \lthickb{X}^3 + n^5)$ when using standard matrix reduction.
\end{corollary}

\autoref{alg:mixed} used in \cite{djk} without proof of correctness. \autoref{thm:main} also closes this gap. Because it has a much smaller overhead than both \autoref{alg:restricted} and \autoref{alg:hom_exact} and was already shown to be effective in \cite{djk} we also included it in our comparative analysis. Surprisingly, we observe that on practically appearing presentations, it is often actually the fastest of the three algorithms.

In \cite{ASASHIBA2025107905} (see next paragraph), it is not the target module, but the domain which has low thickness. 
Therefore we also need to be able to compute homomorphisms faster in this case.
To this end, we utilised the ideas and results of Bauer, Lenzen, and Lesnick \cite{BLL23}, Miller \cite{miller00, Miller2001} and Lenzen \cite{lenzen24}
to dualise \autoref{alg:restricted} and \autoref{alg:hom_exact} resulting in the algorithms \autoref{alg:restricted_dual} and \autoref{alg:exact_dual}. Implicitly, we also get a dual version of \autoref{alg:mixed}.

\subparagraph{Applications in Topological Data Analysis.}

Beyond decompositions, the task of computing homomorphisms arose in two other computational problems in Topological Data Analysis. 
\begin{itemize}
\item Bjerkevik introduces the \emph{Pruning} of a module in \cite{Bjerkevik2025} which promises to stabilise decompositions. To compute the Pruning of a module $X$ one needs to compute the shifted endomorphisms $\Hom\left(X,\, X{[2\epsilon]}\right)$. Using \texttt{AIDA} this reduces the problem to computing, again, homomorphisms between indecomposable modules which we know empirically to have lower thickness.

\item In \cite{ASASHIBA2025107905} Asashiba proves that the relative Betti numbers for the interval-resolutions of a module $X$ can be computed as the homology of a complex of vector spaces $\Hom(\mathcal{K}_*, X)$, where each module in the complex $\mathcal{K}_*$ is interval-decomposable.

\end{itemize}

\subparagraph{Related Work.}

In \cite[4.1 Algorithm Interleaving]{dey_xin}, Dey and Xin explain a strategy to compute the set of homomorphisms between interval modules in linear time for 2-parameters and  in quadratic time in the general case.

For representations over general finite-dimensional algebras, optimised algorithms for homomorphism computation are described by Lux and Sz\H{o}ke \cite{luxsz03} and by Green, Heath, and Struble in the aforementioned \cite{ghs01}. Implementations are available in GAP \cite{GAP4} and MAGMA \cite{MAGMA}. 

A work of similar spirit to ours is due to Schneider \cite{Schneider90}, who exploited the structure of modular group representations to develop faster algorithms for endomorphism rings and general homomorphisms via an iterative procedure.

\subparagraph{Input of our algorithms.}

Minimising a presentation matrix can be done via column-reduction in $\Oc(n^3)$ using the LWKR algorithm \cite{lw-computing, mpfree}, so we can assume the input to all of our algorithms to be \emph{minimal} presentation matrices $M$ and $N$.

We denote by $m, n \in \N$ the number of rows plus the number of columns of $M$ and $N$ respectively, i.e. $ m =|b_0 \br{X} | + |b_1 \br{X} |$ and $ n =|b_0 \br{Y} | + |b_1 \br{Y} |$.

\subparagraph{Output of our algorithms.}

Since we are usually given the persistence modules $X, \, Y$ via their minimal presentations, we want the output of our algorithms to be compatible with this data structure. That is, we want a set of graded matrices $ \{Q_i \}_{i \in I} \subset \K^{G' \times G} $ such that each $Q_i$ induces a homomorphism of persistence modules $\widetilde Q_i \colon X \to Y$ and such that the set $\{\widetilde Q_i\}_{i \in I} \subset \Hom(X, \, Y)$ at least spans - but better even forms a basis of - the $\K$-vector space $\Hom(X, \, Y)$.

}

\section{Resolutions of Multiparameter Persistence Modules}

\subparagraph{Notation}

\begin{itemize}
  \item $A \coloneqq \K[x_1, \dots, x_d]$ denotes the $\Z^d$-graded commutative polynomial algebra.
  \item $X, Y, \dots$ denote (persistence) modules, capital letters $M, N, \dots$ are reserved for matrices.
  \item Greek minuscules $\alpha, \beta, \dots, \omega \in \Z^d$ denote points in the \emph{poset} $\Z^d$. 
\end{itemize}

We recall some basic definitions. We refer to \cite{lenzen24} and \cite{djk_arXiv} for a more comprehensive introduction.

\begin{definition}
 A ($d$-parameter) \emph{persistence module} $X$ over $\K$ is a commutative diagram of $\K$-vector spaces over $\Z^d$ or equivalently a $\Z^d$-graded $A$-module. 
 
 The category of persistence modules is denoted $\grA$.
\end{definition}

\begin{definition}
A \emph{free resolution} of a persistence module $X$ is an exact sequence 
\[  \dots \oto{ \ d_3 \ } F_2 \oto{ \ d_2 \ } F_1 \oto{ \ d_1 \ } F_0 \oto{ \ d_0 \ } X \lra 0 \]
with each $F_i$ free. 
A \emph{presentation} of $X$ is the head $F_1 \oto{ \ d_1 \ } F_0$ of a resolution of $X$.
\end{definition}

\begin{remark}
Every projective persistence module is \emph{free}, even if not finitely generated, by a graded version of Kaplansky's theorem (eg. \cite[Proposition 1.5.15(d)]{Bruns_Herzog}.
\end{remark}

\begin{notation}
Let $\alpha \in \Z^n$.
$F(\alpha)$ and $I(\alpha)$ denote the free and co-free modules generated at $\alpha$. 

We denote by $X[\alpha]$ the $\alpha$-shift (-twist) of $X$, defined by $X[\alpha]_\beta \coloneqq X_{\alpha + \beta}$. Therefore $A[-\alpha] \simeq F(\alpha)$.
\end{notation}

\begin{example}
Graphical representations of the above notations.
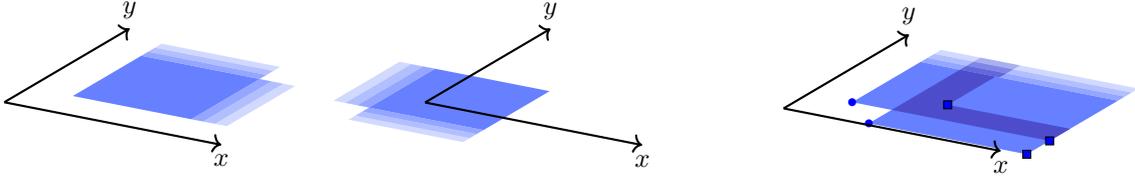
\begin{figure}[H]
\centering
\tdplotsetmaincoords{70}{30}
\begin{tikzpicture}[tdplot_main_coords, scale=0.6]

% Define horizontal shift between copies
\def\xshift{8}
\def\yshift{4.6}

% First copy at x=0 - blue shape starting at (1.9, 1.9) extending in both directions
\begin{scope}[shift={(0,0,0)}]

    % Base blue shape
    \fill[lightblue, opacity=1] (1, 1, 0) -- (1.0, 4, 0) -- (4, 4, 0) -- (4, 1, 0) -- cycle;

    % Fadeout in y direction
    \foreach \yi/\opacity in {4/0.7, 4.3/0.5, 4.6/0.3, 5/0} {
        \pgfmathsetmacro{\yUpper}{ifthenelse(\yi+0.3 > 5, 5, \yi+0.3)}
        \fill[lightblue, opacity=\opacity] (1.0, \yi, 0) -- (1.0, \yUpper, 0) -- (4, \yUpper, 0) -- (4, \yi, 0) -- cycle;
    }
    
    % Fadeout in x direction
    \foreach \xi/\opacity in {4/0.7, 4.3/0.5, 4.6/0.3, 5/0} {
        \pgfmathsetmacro{\xUpper}{ifthenelse(\xi+0.3 > 5, 5, \xi+0.3)}
        \fill[lightblue, opacity=\opacity] (\xi, 1.0, 0) -- (\xUpper, 1.0, 0) -- (\xUpper, 4, 0) -- (\xi, 4, 0) -- cycle;
    }

    % Axes labels
    \draw[->, thick] (-0.1,-0.1,0) -- (5.4,-0.1,0) node[below] {$x$};
    \draw[->, thick] (-0.1,-0.1,0) -- (-0.1,5.4,0) node[above] {$y$};
\end{scope}

% Second copy - mirrored version starting at (1.9, 1.9) extending in negative directions
\begin{scope}[shift={(\xshift,\yshift,0)}]
    % Base blue shape (mirrored)
    \fill[lightblue, opacity=1] (1.9, 1.9, 0) -- (1.9, -1, 0) -- (-1, -1, 0) -- (-1, 1.9, 0) -- cycle;
    
    % Fadeout in negative y direction
    \foreach \yi/\opacity in {-1/0.7, -1.3/0.5, -1.6/0.3, -2/0} {
        \pgfmathsetmacro{\yLower}{ifthenelse(\yi-0.3 < -2, -2, \yi-0.3)}
        \fill[lightblue, opacity=\opacity] (1.9, \yi, 0) -- (1.9, \yLower, 0) -- (-1, \yLower, 0) -- (-1, \yi, 0) -- cycle;

    }
    
    % Fadeout in negative x direction
    \foreach \xi/\opacity in {-1/0.7, -1.3/0.5, -1.6/0.3, -2/0} {
        \pgfmathsetmacro{\xLower}{ifthenelse(\xi-0.3 < -2, -2, \xi-0.3)}
        \fill[lightblue, opacity=\opacity] (\xi, 1.9, 0) -- (\xLower, 1.9, 0) -- (\xLower, -1, 0) -- (\xi, -1, 0) -- cycle;
    }
 
    % Axes labels
    \draw[->, thick] (-0.1,-0.1,0) -- (5.4,-0.1,0) node[below] {$x$};
    \draw[->, thick] (-0.1,-0.1,0) -- (-0.1,5.4,0) node[above] {$y$};
\end{scope}

% Third copy - same as original but shifted by +1 in x direction
\begin{scope}[shift={(2*\xshift-1,2*\yshift-1,0)}]
    \fill[lightblue, opacity=1] (1,1,0) -- (1,4,0) -- (6,4,0) -- (6,0,0) -- (2,0,0) -- (2,1,0) -- cycle;
    \fill[darkblue, opacity=1] (2,1,0) -- (2,4,0) -- (2.9,4,0) -- (2.9,1.9,0) -- (6,1.9,0) -- (6,1,0) -- cycle;
    
    \foreach \yi/\opacity in {4/0.7, 4.3/0.5, 4.6/0.3, 5/0} {
        \pgfmathsetmacro{\yUpper}{ifthenelse(\yi+0.3 > 5, 5, \yi+0.3)}
        \fill[lightblue, opacity=\opacity] (1, \yi, 0) -- (1, \yUpper, 0) -- (6, \yUpper, 0) -- (6, \yi, 0) -- cycle;
        \fill[darkblue, opacity=\opacity] (2, \yi, 0) -- (2, \yUpper, 0) -- (3, \yUpper, 0) -- (3, \yi, 0) -- cycle;
    }
    
    \foreach \x/\y in {1/1, 2/0} {
        \fill[blue] (\x,\y,0) circle (2.5pt);
    }
    
    \foreach \x/\y in {2.9/1.9, 6/0, 6/1} {
        \node[rectangle, draw=black, fill=blue, inner sep=1.5pt] at (\x,\y,0) {};
    }
    
    % Axes labels
    \draw[->, thick] (-0.1,-0.1,0) -- (5.4,-0.1,0) node[below] {$x$};
    \draw[->, thick] (-0.1,-0.1,0) -- (-0.1,5.4,0) node[above] {$y$};
\end{scope}

\end{tikzpicture}
\caption{Left: The free/projective module $A[-(1, 1)] \simeq F\left(1, 1 \right)$. $\ $ Middle: The injective module $I\left(2,2\right)$. $\quad$ Right: the blue module from \autoref{fig:hom1} shifted by $(-1,0)$.}
\label{fig:free}
\end{figure}
\end{example}

\begin{definition}\label{def:injective}
An \emph{injective resolution} of $X$ is an exact sequence
\[ 0 \lra X \oto{ \ \delta_0} I_0 \oto{ \ \delta_1 \ } I_1 \oto{ \ \delta_2 \ } I_2 \oto{ \ \delta_2 \ } \dots  \]
where each $I_i$ is injective. An \emph{injective copresentation} of $X$ is the tail $X \to I_0 \oto{ \ \delta_1 \ } I_1$ of an injective resolution.
\end{definition}

\subparagraph{Bases of Free Modules.}

\begin{definition}
Let $F$ be a free module. A subset $G \subset F$ is a \emph{basis} for $F$ if the map 
\[\varphi_B \colon A[G] \coloneqq \bigoplus\limits_{g \in G} A[-\deg(g)] \to F\]
which sends each $1 \in A[-\deg(g)]$ to $g$ is an isomorphism.
\end{definition}

Let $f\colon X \to Y$ be a map between free modules. Choosing bases $G \subset X$ and $R \subset Y$ induces
\begin{align}\label{eq:gr_matrix}
[f]^R_G \colon A[R] = 
\bigoplus_{r \in R} A[-\deg(r)] \oto{\varphi_R^{-1}}  X
 \oto{f}   
 Y \oto{\varphi_G} \bigoplus_{g \in G} A[-\deg(g)] = A[G].
\end{align}
This is a $G \times R$-\emph{matrix}. 
Because of degree-incompatibilities, some of its entries have to be zero.

\begin{definition}\label{def:graded_matrix}
 A \emph{graded matrix} is a matrix $M \in \K^{m \times n}$ together with functions
 $G \colon [m] \to \Z^d$ and $R \colon [n] \to \Z^d$ which decorate the rows and columns with degrees such that
 \[ \forall (i,j) \in [m] \times [n]: \ \text{ If } R(j) \not\geq G(i) \text{ then } M_{i,j}=0 .\] 
\end{definition}

This structure was first introduced in \cite{miller00} under the name \emph{monomial matrix}.

\begin{definition}
 Given functions $G,R$ as above, we write $\K^{G \times R}$ for the corresponding space of graded matrices. 
 We also define the (ordered) set
$\{G \leq R \}  \coloneqq  \set{(g, r) \in G \times R \ | \ g \leq r } $. 

If $G, \, R$ are instead (ordered) subsets of persistence modules, we also write by abuse of notation $\K^{G \times R}$ and $\{G \leq R \}$ instead of $\K^{\deg(G) \times \deg(R)}$ and $\{ \deg(G) \leq \deg(R) \}$. 
\end{definition}

 In the setting of \autoref{eq:gr_matrix} we have 
$ \Hom(X, \, Y) \simeq \K^{G \times R} = \K \{ G \leq R \}$.

Consequently, the maps in a resolution and the presentation can be represented by graded matrices. They are invariant under a change of basis represented by invertible graded matrices.

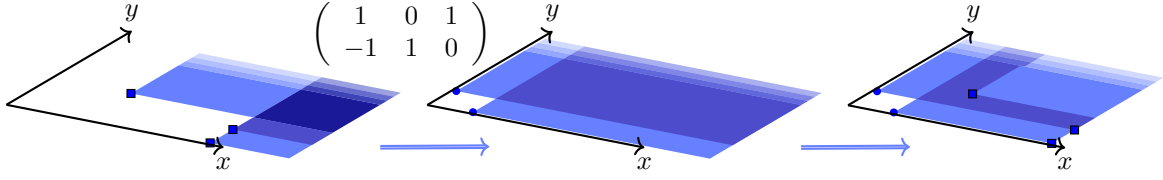
\begin{figure}[H]
\tdplotsetmaincoords{70}{30}
\begin{tikzpicture}[tdplot_main_coords, scale=0.6]
  % Define horizontal shift between copies
  \def\xshift{8} % adjust spacing as needed
    \def\yshift{4.6}
  % First copy at x=0
  \begin{scope}[shift={(0,0,0)}]
    % --- BLUE MODULE (z = 0) ---
    \fill[lightblue, opacity=1]
      (1.9, 1.9, 0) -- (1.9, 4 ,0) -- (5,4,0) -- (5.0, 0, 0) -- (7.0, 0, 0) -- (7.0, 1.9, 0) --  (5, 1.9, 0) --cycle;
      \fill[darkerblue, opacity=1]
      (5, 1.9, 0) -- (5,4,0) -- (7,4,0) -- (7,1.9,0) -- cycle;
      \fill[darkblue, opacity = 1 ]
      (5.0, 1.9, 0) -- (5.0, 1, 0) -- (7.0, 1, 0) -- (7.0, 1.9, 0) -- cycle;
      
    \foreach \yi/\opacity in {4/0.7, 4.3/0.5, 4.6/0.3, 5/0} {
      \pgfmathsetmacro{\yUpper}{ifthenelse(\yi+0.3 > 5, 5, \yi+0.3)}
      \fill[lightblue, opacity=\opacity]
        (1.9, \yi, 0) -- (1.9, \yUpper, 0) -- (7, \yUpper, 0) -- (7, \yi, 0) -- cycle;
      \fill[darkerblue, opacity=\opacity]
        (5, \yi, 0) -- (5, \yUpper, 0) -- (7, \yUpper, 0) -- (7, \yi, 0) -- cycle;  
    }
    % Generators and relations
    \foreach \x/\y in {1.9 / 1.9, 5/0, 5/1.0} {
      \node[rectangle, draw=black, fill=blue, inner sep=1.5pt] at (\x,\y,0) {};
    }
    % Axes labels
    \draw[->, thick] (-0.1,-0.1,0) -- (5.4,-0.1,0) node[below] {$x$};
    \draw[->, thick] (-0.1,-0.1,0) -- (-0.1,5.4,0) node[above] {$y$};
  \end{scope}
  % Second copy shifted right by \xshift and \yshift
  \begin{scope}[shift={(\xshift ,\yshift,0)}]
    % --- BLUE MODULE (z = 0) ---
    \fill[lightblue, opacity=1]
      (0,1,0) -- (0,4,0) -- (7,4,0) -- (7,0,0) -- (1,0,0) -- (1,1,0) -- cycle;
    \fill[darkblue, opacity=1]
      (1,1,0) -- (1,4,0) -- (7,4,0) -- (7,1,0) -- cycle;
    \foreach \yi/\opacity in {4/0.7, 4.3/0.5, 4.6/0.3, 5/0} {
      \pgfmathsetmacro{\yUpper}{ifthenelse(\yi+0.3 > 5, 5, \yi+0.3)}
      \fill[lightblue, opacity=\opacity]
        (0, \yi, 0) -- (0, \yUpper, 0) -- (7, \yUpper, 0) -- (7, \yi, 0) -- cycle;
      \fill[darkblue, opacity=\opacity]
        (1, \yi, 0) -- (1, \yUpper, 0) -- (7, \yUpper, 0) -- (7, \yi, 0) -- cycle;
    }
    % Generators and relations
    \foreach \x/\y in {0/1, 1/0} {
      \fill[blue] (\x,\y,0) circle (2.5pt);
    }
    % Axes labels
    \draw[->, thick] (-0.1,-0.1,0) -- (5.4,-0.1,0) node[below] {$x$};
    \draw[->, thick] (-0.1,-0.1,0) -- (-0.1,5.4,0) node[above] {$y$};
  \end{scope}
  % Third copy shifted right by 2*\xshift
  \begin{scope}[shift={(2*\xshift,2*\yshift,0)}]
    % (repeat entire module code without axes)
    \fill[lightblue, opacity=1]
      (0,1,0) -- (0,4,0) -- (5,4,0) -- (5,0,0) -- (1,0,0) -- (1,1,0) -- cycle;
    \fill[darkblue, opacity=1]
      (1,1,0) -- (1,4,0) -- (1.9,4,0) -- (1.9,1.9,0) -- (5,1.9,0) -- (5,1,0) -- cycle;
    \foreach \yi/\opacity in {4/0.7, 4.3/0.5, 4.6/0.3, 5/0} {
      \pgfmathsetmacro{\yUpper}{ifthenelse(\yi+0.3 > 5, 5, \yi+0.3)}
      \fill[lightblue, opacity=\opacity]
        (0, \yi, 0) -- (0, \yUpper, 0) -- (5, \yUpper, 0) -- (5, \yi, 0) -- cycle;
      \fill[darkblue, opacity=\opacity]
        (1, \yi, 0) -- (1, \yUpper, 0) -- (2, \yUpper, 0) -- (2, \yi, 0) -- cycle;
    }
    \foreach \x/\y in {0/1, 1/0} {
      \fill[blue] (\x,\y,0) circle (2.5pt);
    }
    \foreach \x/\y in {1.9 / 1.9, 5/0, 5/1} {
      \node[rectangle, draw=black, fill=blue, inner sep=1.5pt] at (\x,\y,0) {};
    }
    % Axes labels
    \draw[->, thick] (-0.1,-0.1,0) -- (5.4,-0.1,0) node[below] {$x$};
    \draw[->, thick] (-0.1,-0.1,0) -- (-0.1,5.4,0) node[above] {$y$};
  \end{scope}

% Matrix above the left blue arrow
\node at (\xshift - 2.9, \yshift + 3.6, 0) {$\left(\begin{array}{ccc} 1 & 0 & 1 \\ -1 & 1 & 0 \end{array}\right)$};

  % Arrows between copies  
% From copy 1 to 2 - lightblue arrow  
% Calculate perpendicular offset for parallel lines (small offset perpendicular to arrow direction)  
\draw[thick, lightblue]  
(\xshift - 1, \yshift - 0.6, -1 + 0.03) -- (\xshift + 1, \yshift + 0.6, -1 + 0.03);  
\draw[thick, lightblue]  
(\xshift - 1, \yshift - 0.6, -1 - 0.03) -- (\xshift + 1, \yshift + 0.6, -1 - 0.03);  
% Single arrowhead  
\draw[->, thick, lightblue]  
(\xshift + 1.02, \yshift + 0.612, -1) -- (\xshift + 1.05, \yshift + 0.63, -1);  
% From copy 2 to 3 - lightblue arrow  
\draw[thick, lightblue]  
(2*\xshift - 1, 2*\yshift - 0.6, -1 + 0.03) -- (2*\xshift + 1, 2*\yshift + 0.6, -1 + 0.03);  
\draw[thick, lightblue]  
(2*\xshift - 1, 2*\yshift - 0.6, -1 - 0.03) -- (2*\xshift + 1, 2*\yshift + 0.6, -1 - 0.03);  
% Single arrowhead  
\draw[->, thick, lightblue]  
(2*\xshift + 1.02, 2*\yshift + 0.612, -1) -- (2*\xshift + 1.05, 2*\yshift + 0.63, -1);  
\end{tikzpicture}
\caption{A presentation of the blue module in \autoref{fig:hom1}.}
\label{fig:res}
\end{figure}

\begin{definition}\label{prop:minimal_pres}\cite[7.1]{Peeva}
A free resolution is \emph{minimal} if $\forall i \in \N \colon d_i \otimes_A \K = 0$.
A presentation $P_1 \oto{d_1} P_0 \oto{d_0} X$ is minimal if and only if $d_1$ and its kernel $\ker f \oto{i}  P_1 $ satisfy
 \[d_1 \otimes_A \K  = 0 \ \text{ and }  \ i \otimes_A \K = 0.\]
\end{definition}

A free resolution can be minimised with the LWKR-algorithm \cite{lw-computing, mpfree}

\begin{definition}
The degrees of the generators in a minimal resolution of a module $X$ form multisets in $\Z^d$ called the \emph{graded Betti numbers}. We define them via
\[b_{i}(X) \coloneqq \dim_\K  \Tor_{i}(X, \K ) = \dim_\K  \Ext^{i}(X, \K ).\]
\end{definition}

\section{Computing Homomorphisms as Chain Maps}

For the rest of this text $X$ and $Y$ will denote persistence modules together with implicitly chosen minimal sets of generators $G, \, G'$ and relations $R, \, R'$. The induced minimal presentation \emph{matrices} are denoted by $M$ and $N$. Concretely we are given exact sequences

\begin{alignat}{3}
\dots \lra  & \bigoplus\limits_{r \in R}A[-\deg(r)] &
\oto{ \ M \ \left( \, \coloneqq \, [d_1]^R_G \right) \ }  &
\bigoplus\limits_{g \in G}A[-\deg(g)] &
\oto{ \ d_0 \ }  & 
X \lra 0 \quad  \text{ and } \\
\dots  \lra  & \bigoplus\limits_{r' \in R'}A[-\deg(r')] &
\oto{ \ \ N \ \left( \, \coloneqq \, [e_1]^{R'}_{G'} \right)  \ }   &
\bigoplus\limits_{g' \in G'}A[-\deg(g')] &
\oto{ \ e_0 \ } & 
Y \lra 0 .
\end{alignat}

\subsection{Chain Maps and the Direct Computation}

\begin{lemma}[Fundamental Lemma of Homological Algebra]\label{lem:fundamental}
\cite[III. Theorem 6.1]{maclane}  \ \\
Let $f \colon X \to Y$ be any homomorphism of persistence modules and $(F_*, d_*) \to X, \ (F'_*, e_*) \to Y$ projective resolutions.
Then there exists a chain map $\{f_i\}_{i \in \N}$ which lifts $f$:
% https://q.uiver.app/#q=WzAsMTAsWzMsMCwiWCJdLFszLDEsIlkiXSxbMiwwLCJYXzAiXSxbMiwxLCJZXzAiXSxbMSwwLCJYXzEiXSxbMSwxLCJZXzEiXSxbMCwwLCJcXGRvdHMiXSxbMCwxLCJcXGRvdHMiXSxbNCwwLCIwIl0sWzQsMSwiMCJdLFs2LDQsImRfMiJdLFs3LDUsImVfMiJdLFs0LDIsImRfMSJdLFs1LDMsImVfMSJdLFsyLDAsImRfMCJdLFszLDEsImVfMCJdLFswLDEsImYiXSxbMiwzLCJmXzAiXSxbNCw1LCJmXzEiXSxbMCw4XSxbMSw5XSxbNiw3LCJcXGRvdHMiLDEseyJzdHlsZSI6eyJib2R5Ijp7Im5hbWUiOiJub25lIn0sImhlYWQiOnsibmFtZSI6Im5vbmUifX19XV0=
\[\begin{tikzcd}[ampersand replacement=\&]
	\dots \& {F_1} \& {F_0} \& X \& 0 \\
	\dots \& {F'_1} \& {F'_0} \& Y \& 0
	\arrow["{d_2}", from=1-1, to=1-2]
	\arrow["\dots"{description}, draw=none, from=1-1, to=2-1]
	\arrow["{d_1}", from=1-2, to=1-3]
	\arrow["{f_1}", from=1-2, to=2-2]
	\arrow["{d_0}", from=1-3, to=1-4]
	\arrow["{f_0}", from=1-3, to=2-3]
	\arrow[from=1-4, to=1-5]
	\arrow["f", from=1-4, to=2-4]
	\arrow["{e_2}", from=2-1, to=2-2]
	\arrow["{e_1}", from=2-2, to=2-3]
	\arrow["{e_0}", from=2-3, to=2-4]
	\arrow[from=2-4, to=2-5]
\end{tikzcd}\]

If $f_i$ and $\widetilde f_i$ are chain maps which both lift $f$ then they are homotopic. That is there are maps $h_i \colon F_i \to F'_{i+1}$ satisfying
\[ f_i - \widetilde f_i = e_{i+1} \circ h_i + h_{i-1} \circ d_i.\]
\end{lemma}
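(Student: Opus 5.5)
The statement is the classical Fundamental Lemma, which the paper cites from Mac Lane, so I will only sketch the standard argument. The two ingredients are the projectivity of each $F_i$ (here free, so a map out of $F_i$ is determined by the images of basis elements) and the exactness of the target resolution $F'_*$.

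\emph{Existence.} I build the lift $f_i$ by induction on $i$.
\begin{itemize}
\item For $i=0$: $e_0 \colon F'_0 \to Y$ is surjective, so the map $f\circ d_0 \colon F_0 \to Y$ lifts through $e_0$ because $F_0$ is projective. This gives $f_0$ with $e_0 f_0 = f d_0$. Concretely, send each basis element $b$ of $F_0$ to a homogeneous preimage of $f d_0(b)$ in $(F'_0)_{\deg b}$.
\item For the inductive step, assume $f_0,\dots,f_{i}$ are built with $e_j f_j = f_{j-1} d_j$, where $f_{-1} := f$ and $d_0, e_0$ play the augmentation role. Consider $f_i d_{i+1} \colon F_{i+1} \to F'_i$. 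Then
\[ e_i f_i d_{i+1} = f_{i-1} d_i d_{i+1} = 0, \]
so $f_i d_{i+1}$ lands in $\ker e_i = \Img e_{i+1}$.
\item Since $F_{i+1}$ is projective and $e_{i+1} \colon F'_{i+1} \to \Img e_{i+1}$ is onto, we lift to get $f_{i+1}$ with $e_{i+1} f_{i+1} = f_i d_{i+1}$.
\end{itemize}

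\emph{Uniqueness up to homotopy.} Put $g_i := f_i - \widetilde f_i$. This is a chain map lifting $0 \colon X \to Y$, and I construct $h_i$ inductively with $h_{-1} := 0$.
\begin{itemize}
\item At $i=0$: $e_0 g_0 = 0 \cdot d_0 = 0$, so $g_0$ maps into $\ker e_0 = \Img e_1$. Lifting by projectivity gives $h_0$ with $g_0 = e_1 h_0$.
\item For the inductive step, assume $g_i = e_{i+1}h_i + h_{i-1}d_i$. Consider $g_{i+1} - h_i d_{i+1}$. Applying $e_{i+1}$ gives
\[ g_i d_{i+1} - e_{i+1}h_i d_{i+1} = h_{i-1}d_i d_{i+1} = 0. \]
\item Hence $g_{i+1} - h_i d_{i+1}$ lands in $\Img e_{i+2}$, and lifting along $e_{i+2}$ by projectivity of $F_{i+1}$ yields $h_{i+1}$.
\end{itemize}

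All maps can be chosen graded by lifting basis elements degreewise, since every surjection of graded modules is surjective in each degree.

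There is no real obstacle here. The one point needing care is the bookkeeping of the augmentation at $i=0$, where $d_0$ and $e_0$ replace differentials and the recursion starts with $h_{-1}=0$.
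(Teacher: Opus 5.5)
Your proposal is correct. It is the standard comparison-theorem argument: you construct $f_i$ by induction, lifting into $\ker e_i = \Img e_{i+1}$ using projectivity of $F_i$, and you build the homotopy inductively starting from $h_{-1}=0$. This is exactly the proof the paper relies on by citing Mac Lane, since it gives no proof of its own.
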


\begin{example}
We can visualise chain maps graphically. In the following \autoref{fig:chain} you can see the presentations and a lift of the homomorphism shown in \autoref{fig:hom1}. 

In this example we can also see that there are more chain maps between the projective resolutions than there are homomorphisms between the red and blue module: While in the rightmost picture the generator of the red module can only be mapped to a one-dimensional vector space lying below, instead on the level of projective covers (the middle picture) there is a two-dimensional vector space below the generator.
\begin{figure}[H]
\tdplotsetmaincoords{70}{30}

\begin{tikzpicture}[tdplot_main_coords, scale=0.6]

  % Define horizontal shift between copies
  \def\xshift{8} % adjust spacing as needed
    \def\yshift{4.6}
  % First copy at x=0
  \begin{scope}[shift={(0,0,0)}]
    % Put your entire module drawing here (copy your full code without \begin{tikzpicture} and \end{tikzpicture})
    % --- BLUE MODULE (z = 0) ---
    \fill[lightblue, opacity=1]
      (1.9, 1.9, 0) -- (1.9, 4 ,0) -- (5,4,0) -- (5.0, 0, 0) -- (7.0, 0, 0) -- (7.0, 1.9, 0) --  (5, 1.9, 0) --cycle;
      \fill[darkerblue, opacity=1]
      (5, 1.9, 0) -- (5,4,0) -- (7,4,0) -- (7,1.9,0) -- cycle;
      \fill[darkblue, opacity = 1 ]
      (5.0, 1.9, 0) -- (5.0, 1, 0) -- (7.0, 1, 0) -- (7.0, 1.9, 0) -- cycle;

    \foreach \yi/\opacity in {4/0.7, 4.3/0.5, 4.6/0.3, 5/0} {
      \pgfmathsetmacro{\yUpper}{ifthenelse(\yi+0.3 > 5, 5, \yi+0.3)}
      \fill[darkblue, opacity=\opacity]
        (1.9, \yi, 0) -- (1.9, \yUpper, 0) -- (7, \yUpper, 0) -- (7, \yi, 0) -- cycle;
      \fill[darkerblue, opacity=\opacity]
        (5, \yi, 0) -- (5, \yUpper, 0) -- (7, \yUpper, 0) -- (7, \yi, 0) -- cycle;  
        
    }
    
    % --- RED MODULE (z = 1) ---
    \fill[lightred, opacity=1]
      (6,4,1) -- (6,2,1) -- (8,2,1) -- (8,4,1) -- cycle;
    \foreach \yi/\opacity in {4/0.7, 4.3/0.5, 4.6/0.3, 5/0} {
      \pgfmathsetmacro{\yUpper}{ifthenelse(\yi+0.3 > 5, 5, \yi+0.3)}
      \fill[lightred, opacity=\opacity]
        (8, \yi, 1) -- (8, \yUpper, 1) -- (6, \yUpper, 1) -- (6, \yi, 1) -- cycle;
    }
    
    % First scope - replace the vertical depth arrows
\foreach \x/\y in {6/2} {
  % Two parallel lines for the shaft (no arrowheads)
  \draw[black, line width=1.0pt, opacity=0.8]
    (\x-0.05,\y,0.9) -- (\x-0.05,\y,0.15);
  \draw[black, line width=1.0pt, opacity=0.8]
    (\x+0.05,\y,0.9) -- (\x+0.05,\y,0.15);
  % Single arrowhead
  \draw[->, black, line width=1.5pt, opacity=0.8]
    (\x,\y,0.15) -- (\x,\y,0.05);
}

    % Generators and relations
    \foreach \x/\y in {1.9 / 1.9, 5/0, 5/1} {
      \node[rectangle, draw=black, fill=blue, inner sep=1.5pt] at (\x,\y,0) {};
    }
    \foreach \x/\y in {6/2} {
      \node[rectangle, draw=black, fill=red, inner sep=1.5pt] at (\x,\y,1) {};
    }
    % Axes labels 
    \draw[->, thick] (-0.1,-0.1,0) -- (5.4,-0.1,0) node[below] {$x$};
    \draw[->, thick] (-0.1,-0.1,0) -- (-0.1,5.4,0) node[above] {$y$};
  \end{scope}

  % Second copy shifted right by \xshift and \yshift
  \begin{scope}[shift={(\xshift ,\yshift,0)}]
    
    % --- BLUE MODULE (z = 0) ---
    \fill[lightblue, opacity=1]
      (0,1,0) -- (0,4,0) -- (7,4,0) -- (7,0,0) -- (1,0,0) -- (1,1,0) -- cycle;
    \fill[darkblue, opacity=1]
      (1,1,0) -- (1,4,0) -- (7,4,0) -- (7,1,0) -- cycle;
    \foreach \yi/\opacity in {4/0.7, 4.3/0.5, 4.6/0.3, 5/0} {
      \pgfmathsetmacro{\yUpper}{ifthenelse(\yi+0.3 > 5, 5, \yi+0.3)}
      \fill[lightblue, opacity=\opacity]
        (0, \yi, 0) -- (0, \yUpper, 0) -- (7, \yUpper, 0) -- (7, \yi, 0) -- cycle;
      \fill[darkblue, opacity=\opacity]
        (1, \yi, 0) -- (1, \yUpper, 0) -- (7, \yUpper, 0) -- (7, \yi, 0) -- cycle;
    }
    % --- RED MODULE (z = 1) ---
    \fill[lightred, opacity=1]
      (2,2,1) -- (2,4,1) -- (8,4,1) -- (8,2,1) -- cycle;
    \foreach \yi/\opacity in {4/0.7, 4.3/0.5, 4.6/0.3, 5/0} {
      \pgfmathsetmacro{\yUpper}{ifthenelse(\yi+0.3 > 5, 5, \yi+0.3)}
      \fill[lightred, opacity=\opacity]
        (2, \yi, 1) -- (2, \yUpper, 1) -- (8, \yUpper, 1) -- (8, \yi, 1) -- cycle;
    }
    
    % Second scope - replace the vertical depth arrows  
\foreach \x/\y in {2/2} {
  % Two parallel lines for the shaft (no arrowheads)
  \draw[black, line width=1.0pt, opacity=0.8]
    (\x-0.05,\y,0.9) -- (\x-0.05,\y,0.15);
  \draw[black, line width=1.0pt, opacity=0.8]
    (\x+0.05,\y,0.9) -- (\x+0.05,\y,0.15);
  % Single arrowhead
  \draw[->, black, line width=1.5pt, opacity=0.8]
    (\x,\y,0.15) -- (\x,\y,0.05);
}

    % Generators and relations
    \foreach \x/\y in {0/1, 1/0} {
      \fill[blue] (\x,\y,0) circle (2.5pt);
    }

    \foreach \x/\y in {2/2} {
      \fill[red] (\x,\y,1) circle (2.5pt);
    }
    
    % Axes labels 
    \draw[->, thick] (-0.1,-0.1,0) -- (5.4,-0.1,0) node[below] {$x$};
    \draw[->, thick] (-0.1,-0.1,0) -- (-0.1,5.4,0) node[above] {$y$};
    
  \end{scope}

  % Third copy shifted right by 2*\xshift
  \begin{scope}[shift={(2*\xshift,2*\yshift,0)}]
    % (repeat entire module code without axes)
    \fill[lightblue, opacity=1]
      (0,1,0) -- (0,4,0) -- (5,4,0) -- (5,0,0) -- (1,0,0) -- (1,1,0) -- cycle;
    \fill[darkblue, opacity=1]
      (1,1,0) -- (1,4,0) -- (1.9,4,0) -- (1.9,1.9,0) -- (5,1.9,0) -- (5,1,0) -- cycle;
    \foreach \yi/\opacity in {4/0.7, 4.3/0.5, 4.6/0.3, 5/0} {
      \pgfmathsetmacro{\yUpper}{ifthenelse(\yi+0.3 > 5, 5, \yi+0.3)}
      \fill[lightblue, opacity=\opacity]
        (0, \yi, 0) -- (0, \yUpper, 0) -- (5, \yUpper, 0) -- (5, \yi, 0) -- cycle;
      \fill[darkblue, opacity=\opacity]
        (1, \yi, 0) -- (1, \yUpper, 0) -- (2, \yUpper, 0) -- (2, \yi, 0) -- cycle;
    }
    \fill[lightred, opacity=1]
      (2,2,1) -- (2,4,1) -- (6,4,1) -- (6,2,1) -- cycle;
    \foreach \yi/\opacity in {4/0.7, 4.3/0.5, 4.6/0.3, 5/0} {
      \pgfmathsetmacro{\yUpper}{ifthenelse(\yi+0.3 > 5, 5, \yi+0.3)}
      \fill[lightred, opacity=\opacity]
        (2, \yi, 1) -- (2, \yUpper, 1) -- (6, \yUpper, 1) -- (6, \yi, 1) -- cycle;
    }

    % Third scope - replace the vertical depth arrows
\foreach \x/\y in {2/2, 6/2} {
  % Two parallel lines for the shaft (no arrowheads)
  \draw[black, line width=1.0pt, opacity=0.8]
    (\x-0.05,\y,0.9) -- (\x-0.05,\y,0.15);
  \draw[black, line width=1.0pt, opacity=0.8]
    (\x+0.05,\y,0.9) -- (\x+0.05,\y,0.15);
  % Single arrowhead
  \draw[->, black, line width=1.5pt, opacity=0.8]
    (\x,\y,0.15) -- (\x,\y,0.05);
}

    \foreach \x/\y in {0/1, 1/0} {
      \fill[blue] (\x,\y,0) circle (2.5pt);
    }
    \foreach \x/\y in {1.9 / 1.9, 5/0} {
      \node[rectangle, draw=black, fill=blue, inner sep=1.5pt] at (\x,\y,0) {};
    }
    \foreach \x/\y in {2/2} {
      \fill[red] (\x,\y,1) circle (2.5pt);
    }
    \foreach \x/\y in {6/2} {
      \node[rectangle, draw=black, fill=red, inner sep=1.5pt] at (\x,\y,1) {};
    }
    % Axes labels 
    \draw[->, thick] (-0.1,-0.1,0) -- (5.4,-0.1,0) node[below] {$x$};
    \draw[->, thick] (-0.1,-0.1,0) -- (-0.1,5.4,0) node[above] {$y$};
  \end{scope}

% Arrows between copies  
% From copy 1 to 2 - lightblue arrow  
% Calculate perpendicular offset for parallel lines (small offset perpendicular to arrow direction)  
\draw[thick, lightblue]  
(\xshift - 1, \yshift - 0.6, -1 + 0.03) -- (\xshift + 1, \yshift + 0.6, -1 + 0.03);  
\draw[thick, lightblue]  
(\xshift - 1, \yshift - 0.6, -1 - 0.03) -- (\xshift + 1, \yshift + 0.6, -1 - 0.03);  
% Single arrowhead  
\draw[->, thick, lightblue]  
(\xshift + 1.02, \yshift + 0.612, -1) -- (\xshift + 1.05, \yshift + 0.63, -1);  
% From copy 1 to 2 - lightred arrow  
\draw[thick, lightred]  
(\xshift + 0.5, \yshift + 0.3, 1 + 0.03) -- (\xshift + 2, \yshift + 1.2, 1 + 0.03);  
\draw[thick, lightred]  
(\xshift + 0.5, \yshift + 0.3, 1 - 0.03) -- (\xshift + 2, \yshift + 1.2, 1 - 0.03);  
% Single arrowhead  
\draw[->, thick, lightred]  
(\xshift + 2.02, \yshift + 1.212, 1) -- (\xshift + 2.05, \yshift + 1.23, 1);  
% From copy 2 to 3 - lightblue arrow  
\draw[thick, lightblue]  
(2*\xshift - 1, 2*\yshift - 0.6, -1 + 0.03) -- (2*\xshift + 1, 2*\yshift + 0.6, -1 + 0.03);  
\draw[thick, lightblue]  
(2*\xshift - 1, 2*\yshift - 0.6, -1 - 0.03) -- (2*\xshift + 1, 2*\yshift + 0.6, -1 - 0.03);  
% Single arrowhead  
\draw[->, thick, lightblue]  
(2*\xshift + 1.02, 2*\yshift + 0.612, -1) -- (2*\xshift + 1.05, 2*\yshift + 0.63, -1);  
% From copy 2 to 3 - lightred arrow  
\draw[thick, lightred]  
(2*\xshift + 0.5, 2*\yshift + 0.3, 1 + 0.03) -- (2*\xshift + 2, 2*\yshift + 1.2, 1 + 0.03);  
\draw[thick, lightred]  
(2*\xshift + 0.5, 2*\yshift + 0.3, 1 - 0.03) -- (2*\xshift + 2, 2*\yshift + 1.2, 1 - 0.03);  
% Single arrowhead  
\draw[->, thick, lightred]  
(2*\xshift + 2.02, 2*\yshift + 1.212, 1) -- (2*\xshift + 2.05, 2*\yshift + 1.23, 1);
  
\end{tikzpicture}
\caption{A chain map lifting a homomorphism of persistence modules in \autoref{fig:hom1}}
\label{fig:chain}
\end{figure}
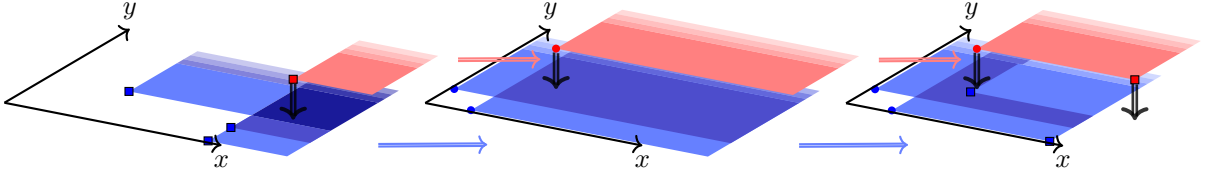

\end{example}

The first part of the fundamental lemma \autoref{lem:fundamental} guarantees the existence of the set of matrices $\{Q_j\}_{j \in J}\colon \K^{G' \times G}$.
If we want it to be minimal, i.e. for it to form a basis, we can use the second part of the fundamental lemma \autoref{lem:fundamental}:
\begin{corollary}\label{cor:nullhomotopic}
    A tuple of graded matrices $\{Q_j\}_{j \in J}$ in $\K^{G' \times G}$ descends to a linearly dependent set of homomorphisms $X \to Y$ if and only if there are $\lambda_j \in \K$, which are not all zero, and a graded matrix $H \colon A[G] \to A[R']$ such that
\begin{align}\label{eq:nullhomotopic}
\sum\limits_{j \in J} \lambda_j Q_j = N H.
\end{align}
\end{corollary}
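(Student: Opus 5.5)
The plan is to prove both directions of \autoref{cor:nullhomotopic} directly from the second half of \autoref{lem:fundamental}, applied to a chain map that lifts the zero map. Put $Q \coloneqq \sum_j \lambda_j Q_j$. By linearity, each $Q_j$ induces $\widetilde Q_j \colon X \to Y$. This uses the standing assumption that the $Q_j$ come from chain maps, i.e.\ there are $P_j$ with $Q_j M = N P_j$. Then $\sum_j \lambda_j \widetilde Q_j$ is exactly the homomorphism induced by $Q$. So the family is linearly dependent if and only if there are $\lambda_j$, not all zero, such that $Q$ induces the zero map $X \to Y$. It therefore suffices to show that a graded matrix $Q \colon A[G] \to A[G']$ which extends to a morphism of presentations induces $0$ if and only if $Q = NH$ for some graded matrix $H \colon A[G] \to A[R']$.

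For the implication from right to left, assume $Q = NH$. Then $e_0 \circ Q = e_0 \circ N \circ H = 0$ by exactness of the presentation of $Y$, and so the induced map $\widetilde Q$ satisfies $\widetilde Q \circ d_0 = e_0 \circ Q = 0$. Since $d_0$ is surjective, $\widetilde Q = 0$.

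For the implication from left to right, assume $\widetilde Q = 0$. Extend $(Q, P)$, where $QM = NP$, to a full chain map between the resolutions $F_* \to X$ and $F'_* \to Y$ using the existence part of \autoref{lem:fundamental}. Both this chain map and the zero chain map lift the zero homomorphism. The homotopy statement of \autoref{lem:fundamental} then gives maps $h_i$ with
\[ Q - 0 = e_1 \circ h_0 + h_{-1} \circ d_0 .\]
Here $h_{-1}\colon X \to F'_0$ is zero under the usual convention, so this step must be checked against Mac Lane's formulation, where the homotopy starts at $h_0 \colon F_0 \to F'_1$. We obtain $Q = N H$ with $H \coloneqq [h_0]^{G}_{R'}$. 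Since $h_0$ is a morphism of graded modules, its matrix in the chosen bases is a graded matrix.

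Alternatively, the left-to-right direction can be shown without homotopies. Because $e_0 \circ Q = 0$, the image of $Q$ lies in $\ker e_0 = \Img e_1$. Since $A[G]$ is free (projective), $Q$ lifts along the surjection $A[R'] \onto \Img e_1$ to a map $H$ with $NH = Q$. This elementary argument is the one I would actually write, with the lemma cited as the conceptual reason. The only subtle point, and the main thing to be careful about, is the well-definedness of $\widetilde Q_j$, i.e.\ that the $Q_j$ really descend. Otherwise the statement is routine.
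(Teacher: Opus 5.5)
Your proposal is correct. Your first argument is the paper's proof. The paper notes that a linear combination of lifts induces the zero map if and only if it is null-homotopic, and takes $H$ to be the degree-zero component $h_0$ of the homotopy from \autoref{lem:fundamental}.

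The elementary argument you say you would actually write is that same degree-zero step of the Fundamental Lemma's homotopy half, made explicit: from $e_0 \circ Q = 0$ you get $\Img Q \subset \ker e_0 = \Img e_1$, then lift along $A[R'] \twoheadrightarrow \Img e_1$ using that $A[G]$ is free. It buys you three things:
\begin{itemize}
\item You never need to extend $(Q,P)$ to a chain map between full resolutions.
\item The convention issue about $h_{-1}$ that you rightly flagged disappears. In Mac Lane's formulation the homotopy starts at $h_0$, so there is no $h_{-1}\circ d_0$ term.
\item It shows that the $P_j$ are needed only for the $Q_j$ to descend at all, not for constructing $H$.
\end{itemize}

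You also write out the easy direction, which the paper leaves implicit in the word ``equivalent'': $Q = NH$ forces $\widetilde Q \circ d_0 = e_0 \circ N \circ H = 0$, hence $\widetilde Q = 0$ because $d_0$ is surjective. The paper's version, in exchange, is a one-line citation that frames the statement conceptually: lifts of $0$ are exactly the null-homotopic chain maps.
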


\begin{proof}
The induced set of homomorphisms $\{\widetilde Q_i\}$ is linearly dependent iff a linear combination of any choice of lifts induces the zero map. The latter is equivalent to the existence of the homotopy $H$.  
\end{proof}

\subparagraph{Direct Computation}

We have already seen how to compute a basis of the vector space of morphisms of presentations in \autoref{eq:naive}. \autoref{cor:nullhomotopic} tells us how to extend this to an algorithm which computes a basis for $\Hom(X,Y)$.

\begin{algorithm}[H]
\caption{Computation from Morphisms of Presentations}
\label{alg:direct}
\DontPrintSemicolon
\KwIn{Presentation matrices $M \in \K^{G \times R}$, $N \in \K^{G' \times R'}$}
\KwOut{Graded matrices $\{Q_i\}_{i \in I} \in \K^{G' \times G}$ which descend to a basis of $\Hom(X, \, Y)$}
\label{gauss} Calculate a basis $(Q_j, \, P_j)_{j \in J} \in \K^{G' \times G}, \K^{R' \times R}$ of the solution space of the system $Q M = N P$ \;
Impose any order on $\set{G \leq G' }$ and set $d_Q \gets |\set{G \leq G' }|$\;
Treat the tuple $(Q_j)_{j \in J}$ as a matrix in $\K^{ d_Q \times J}$ \;
Initialise a matrix $\bar N \in \K^{d_Q \times \set{R' \leq G}}$ \;
\For{$(r', g) \in \set{R' \leq G}$}{ \label{line:homotopy_components}
    Initialise $N^{r',g} \in \K^{d_Q}$\;
    \For{$(\bar g, g') \in \set{G \leq G'}$}{
        \If{$\bar g = g$ }{
            $\left( N^{r',g}\right)_{g',\bar g} \coloneqq N_{g',r'}$\;
        } \Else {
            $\left( N^{r',g}\right)_{g',\bar g} \coloneqq 0$ \;
        }
    }
    $\bar N_{\bullet, (r',g)} \gets N^{r',g}$\;
}

Column-reduce the matrix $\left[ \bar N \ \ (Q_j) \right] \in \K^{d_Q \times (\{G \leq R'\} \, \cup \, J )}$\; \label{homotopy_reduction} 
$J' \gets \{j \in J \, | \ Q_j \neq 0 \}$\;
\KwRet{$\left(Q_j\right)_{j \in J'}$}
\end{algorithm}

\begin{example}\label{ex:running1}
We perform the computation for the example homomorphism in (\autoref{fig:hom1_repeat}). Consider the minimal presentations $M$ and $N$ of the red module, $X$, and the blue module, $Y$.

\begin{figure}[H]
    \centering
    \begin{minipage}[c]{0.48\textwidth}
    \tdplotsetmaincoords{70}{30}
\begin{tikzpicture}[tdplot_main_coords, scale=1]

  % ---- BLUE MODULE (z = 0) ----
  % base blue shape
  \fill[lightblue, opacity=1]
    (0,1,0) -- (0,4,0) -- (5,4,0) -- (5,0,0) -- (1,0,0) -- (1,1,0) -- cycle;
  \fill[darkblue, opacity=1]
    (1,1,0) -- (1,4,0) -- (1.9,4,0) -- (1.9,1.9,0) -- (5,1.9,0) -- (5,1,0) -- cycle;

    \foreach \yi/\opacity in {4/0.7, 4.3/0.5, 4.6/0.3, 5/0} {
      \pgfmathsetmacro{\yUpper}{ifthenelse(\yi+0.3 > 5, 5, \yi+0.3)}
      \fill[lightblue, opacity=\opacity]
        (0, \yi, 0) -- (0, \yUpper, 0) -- (5, \yUpper, 0) -- (5, \yi, 0) -- cycle;
      \fill[darkblue, opacity=\opacity]
        (1, \yi, 0) -- (1, \yUpper, 0) -- (2, \yUpper, 0) -- (2, \yi, 0) -- cycle;
    }
  
  % ---- RED MODULE (z = 1) ----
  \fill[lightred, opacity=1]
    (2,2,1) -- (2,4,1) -- (6,4,1) -- (6,2,1) -- cycle;

  \foreach \yi/\opacity in {4/0.7, 4.3/0.5, 4.6/0.3, 5/0} {
  \pgfmathsetmacro{\yUpper}{ifthenelse(\yi+0.3 > 5, 5, \yi+0.3)}
  \fill[lightred, opacity=\opacity]
    (2, \yi, 1) -- (2, \yUpper, 1) -- (6, \yUpper, 1) -- (6, \yi, 1) -- cycle;
    }
    
% Vertical homomorphism arrows with bold transparent double stroke
\foreach \x/\y in {2/2, 6/2} {
  % Two parallel lines for the shaft (no arrowheads)
  \draw[black, line width=1.5pt]
    (\x-0.05,\y,0.9) -- (\x-0.05,\y,0.15);
  \draw[black, line width=1.5pt]
    (\x+0.05,\y,0.9) -- (\x+0.05,\y,0.15);
  % Single arrowhead (lower)
  \draw[->, black, line width=1.5pt]
    (\x,\y,0.15) -- (\x,\y,0.05);
}

  % --- GENERATORS & RELATIONS for blue module (z = 0) ---
  \foreach \x/\y in {0/1, 1/0} {
    \fill[blue] (\x,\y,0) circle (2.5pt);
  }
  \foreach \x/\y in {1.9 / 1.9, 5/0, 5/1} {
    \node[rectangle, draw=black, fill=blue, inner sep=1.5pt] at (\x,\y,0) {};
  }

  % --- GENERATORS & RELATIONS for red module (z = 1) ---
  \foreach \x/\y in {2/2} {
    \fill[red] (\x,\y,1) circle (2.5pt);
  }
  \foreach \x/\y in {6/2} {
    \node[rectangle, draw=black, fill=red, inner sep=1.5pt] at (\x,\y,1) {};
  }

    % Axes (x and y only)
  \draw[->, thick] (-0.1,-0.1,0) -- (5.4,-0.1,0) node[below] {$x$};
  \draw[->, thick] (-0.1,-0.1,0) -- (-0.1,5.4,0) node[above] {$y$};
  
\end{tikzpicture}
    \end{minipage}
    \begin{minipage}[c]{0.45\textwidth}
        \[
        \begin{array}{| c |  c |}
            \hline 
             M & (6,2) \\
            \hline
            \lrcell (2,2) &  1 \\
         \hline
        \end{array} \] 

        \[
        \begin{array}{| c | c c c |}
            \hline 
             N &  (2,2) & (5, 0) & (5,1) \\
            \hline
            \lbcell (0,1) &  1 & \gcell 0 & 1\\
            \lbcell (1,0) &  -1 & 1 & 0\\ 
         \hline
        \end{array}
        \]
    \end{minipage}
    \caption{\autoref{fig:hom1} endowed with presentations. The gray cell indicates an entry which has to be zero.}
    \label{fig:hom1_repeat}
\end{figure}
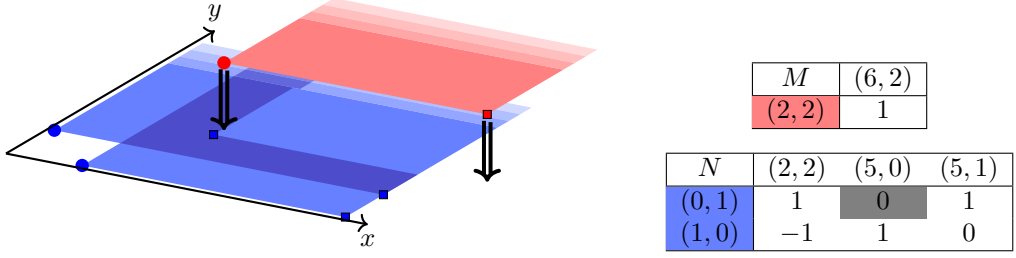

The full linear system to compute $\Hom(M, \ N)$ is: 
\begin{align}
\text{Find } Q = \begin{bmatrix} 
    q_{1,1} \\
    q_{2,1} \\
\end{bmatrix} \in \K^{\left( (2,2) \right) \times \left( (0,1), \, (1,0) \right)} \quad 
& \text{ and } P = \begin{bmatrix}
    p_{1,1} \\
    p_{2,1} \\
    p_{3,1}\\ 
\end{bmatrix} \in \K^{\left( (6,2) \right) \times \left( (2,2), \, (5,0), \, (5,1) \right)} \text{ such that } \notag
\\
\label{eq:hom_system}
\begin{bmatrix}
    q_{1,1} \\
    q_{2,1}
\end{bmatrix}  = \begin{bmatrix}
    q_{1,1} \\
    q_{2,1}
\end{bmatrix} 
\begin{bmatrix}
    1 \\
\end{bmatrix} & =
\begin{bmatrix}
    1 & 0 & 1\\
    -1 & 1 & 0\\
\end{bmatrix}
\begin{bmatrix}
    p_{1,1} \\
    p_{2,1} \\
    p_{3,1}\\ 
\end{bmatrix} = 
\begin{bmatrix}
    p_{1,1} + p_{3,1} \\
    - p_{1,1} + p_{2,1} \\
\end{bmatrix}
\end{align}

A basis of the solution space is given by the three pairs
\[ \left( \begin{bmatrix}
    1 \\
    -1
\end{bmatrix}, \begin{bmatrix}
    1 \\
    0 \\
    0\\ 
\end{bmatrix}\right)  , \ \left(
 \begin{bmatrix}
    0 \\
    1
\end{bmatrix}, 
\begin{bmatrix}
    0 \\
    1\\
    0\\ 
\end{bmatrix}\right) , \ \left(
 \begin{bmatrix}
    1 \\
    0
\end{bmatrix}, \begin{bmatrix}
    0 \\
    0 \\
    1\\ 
\end{bmatrix} \right) \]
A homotopy is given by a map from the generators of $X$ to the relations of $Y$. The only compatible pair of degrees here is $\{G \leq R'\} = \left\{ (2,2)  \leq \left( (2,2), \, (5,0), \, (5,1) \right) \right\} = \left\{ \left[ (2,2), \, (2,2) \right] \right\}$, so the iteration in \autoref{line:homotopy_components} is carried out once for this pair.

We have $\{G \leq G'\} = \left\{ \left[ (2,2), \, (0,1)  \right], \left[ (2,2), \, (1,0)  \right] \right\}$ and so $d_Q = 2$ and $N^{ (2,2), \, (2,2) }$ is a column of length $2$. Since in both of these pairs the chosen generator in $G$ is the same as the one in $\{G \leq R'\}$, we have
\[ N^{ (2,2), \, (2,2) } = N_{\bullet, (2,2)} =  \begin{bmatrix}
    1 \\
    -1
\end{bmatrix}\]
This column in the matrix $\bar N$ witnesses that there is a potential homotopy $H$ for which $NH$ is exactly this column. We therefore delete all null-homotopic morphisms of presentations by reducing $\left[\bar N \left(Q_j\right)_{j \in J} \right] $:
\[ \left[\bar N \left(Q_j\right)_{j \in J} \right]  = 
\begin{bmatrix}
1 & 1 & 0 & 1 \\
-1 & -1 & 1 & 0 \\
\end{bmatrix} \overset{\text{Column Reduction}}{\Longrightarrow}
\begin{bmatrix}
1 & 0 & 1 & 0 \\
-1 & 0 & 0 & 0 \\
\end{bmatrix} \]
We find that $J' = \{1\}$ and $(Q_j)_{j \in J'} = \begin{bmatrix}
1  \\
 0 \\
\end{bmatrix} $, which indeed induces a basis for $\Hom(X, \, Y) \simeq \K$.

\end{example}

\begin{proposition}
\autoref{alg:direct} returns a basis of $\Hom(X, \, Y)$.
\end{proposition}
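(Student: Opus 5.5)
The proof has two halves. First, every $Q_j$ produced in line~\ref{gauss} induces a homomorphism, and every homomorphism is induced by some combination of them. Second, the column reduction in line~\ref{homotopy_reduction} removes exactly the redundancy described in \autoref{cor:nullhomotopic}.

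\emph{Spanning.} A pair $(Q,P)$ with $QM=NP$ is a morphism of presentations. Since $d_0$ is the cokernel of $M$ and $e_0$ the cokernel of $N$, such a $Q$ descends to a unique $\widetilde Q\colon X\to Y$. Conversely, given $f\in\Hom(X,Y)$, \autoref{lem:fundamental} gives a lift $(f_0,f_1)$, whose matrices $([f_0],[f_1])$ solve $QM=NP$. They are therefore a linear combination of the basis $(Q_j,P_j)_{j\in J}$. The map $Q\mapsto\widetilde Q$ is linear, so $\{\widetilde Q_j\}_{j\in J}$ spans $\Hom(X,Y)$.

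\emph{Interpreting $\bar N$.} We identify $\K^{G'\times G}$ with $\K^{d_Q}$ via the chosen order on $\{G\leq G'\}$. Graded matrices $H\in\K^{R'\times G}$ have the elementary matrices $E_{r',g}$, $(r',g)\in\{G\leq R'\}$, as a basis. The key step is to check that the column $N^{r',g}$ built in the loop of line~\ref{line:homotopy_components} is exactly the vectorisation of $NE_{r',g}$. Indeed, $NE_{r',g}$ is the matrix whose $g$-th column is $N_{\bullet,r'}$ and whose other columns vanish. Its entries outside $\{G\leq G'\}$ are zero by gradedness, because $\deg g'\le\deg r'\le\deg g$ wherever $N_{g',r'}\neq0$. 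Hence the column space of $\bar N$ is precisely $\{NH : H \text{ graded}\}$, the space of null-homotopic $Q$'s.

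\emph{Reduction.} We perform the left-to-right column reduction of $[\bar N\ (Q_j)]$. A column $Q_j$ becomes nonzero exactly when it is not in the span of $\bar N$ together with the earlier $Q_{j'}$. So the surviving indices $J'$ give a subset of $(Q_j)$ that is linearly independent modulo $\Img(\bar N)$ and spans $\operatorname{span}(Q_j)+\Img(\bar N)$ modulo $\Img(\bar N)$. By \autoref{cor:nullhomotopic}, linear independence modulo $\Img(\bar N)=\{NH\}$ is the same as linear independence of the induced homomorphisms. Combined with the spanning step, $\{\widetilde Q_j\}_{j\in J'}$ is a basis.

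The main obstacle is bookkeeping rather than an idea. We must verify the identification of $\bar N$ with $\{NH\}$, including that the restriction to $\{G\leq G'\}$ loses no entries. We must also state precisely which columns the returned $Q_j$ are. For the latter, we return the \emph{original} $Q_j$, not the reduced columns; the standard property of column reduction with pivot tracking guarantees that the original columns at non-zero positions have the stated independence and spanning properties.
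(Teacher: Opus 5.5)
Your proposal is correct and follows essentially the same route as the paper. By \autoref{cor:nullhomotopic}, it suffices that the column reduction of $[\bar N\ (Q_j)]$ discards exactly those $Q_j$ lying in $\Img\bar N$ plus the span of earlier $Q$'s, which is the paper's ``largest index $j_0$ with $\lambda_{j_0}\neq 0$'' argument. You are more explicit than the paper in two places that it leaves implicit: the spanning half (via \autoref{lem:fundamental}) and the check that the columns of $\bar N$ are exactly the vectorisations of $NE_{r',g}$.
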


\begin{proof}
By \autoref{cor:nullhomotopic} we only need to show that there are no $ (\lambda_j)_{j \in J'}$ and no $H \colon A[G] \to A[R']$ and such that the output $(Q_j)_{j \in J'}$ satisfies \autoref{eq:nullhomotopic}. So assume that these did exist. Let $j_0$ be the largest index, such that $\lambda_{j_0} \neq 0$. Then
\[ Q_{j_0} = \frac{1}{\lambda_{j_0}} \sum\limits_{j \leq j_0} - \lambda_j Q_j + \frac{1}{\lambda_{j_0}} \sum\limits_{ \deg(g) \leq \deg(r')}
H_{r', g}\left[ 0 \dots 0 \ \underset{\text{column } g}{N_{\bullet, r'}} \ 0 \dots 0 \right].
\]
Observe that the matrices to the right correspond exactly to the vectors $\bar N_{\bullet, (r', g)}$ and so the reduction in \autoref{homotopy_reduction} would have cleared the column at $j_0$.
\end{proof}

\begin{proposition}
When using Gauss-Elimination to solve \autoref{eq:naive}, \autoref{alg:direct} runs at most in time $\Oc\left( (mn)^2(m^2+n^2) \right)$ and uses $\Oc \left((m+n)^4\right)$ memory.
\end{proposition}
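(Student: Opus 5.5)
We bound the cost of \autoref{alg:direct} step by step. The two expensive steps are the Gauss elimination in \autoref{gauss} and the column reduction in \autoref{homotopy_reduction}; everything else is bookkeeping. Throughout, we use $|G|,|R| \le m$ and $|G'|,|R'| \le n$.

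\emph{Size of the linear system in \autoref{gauss}.} The unknowns are the entries of the graded matrices $Q \in \K^{G' \times G}$ and $P \in \K^{R' \times R}$. Their number is at most
\[ |\{G \le G'\}| + |\{R \le R'\}| \le |G||G'| + |R||R'| = \Oc(mn). \]
The equation $QM = NP$ is an identity of $G' \times R$ matrices. Each entry is a scalar linear equation, because in a graded matrix every entry is a scalar in $\K$. So there are at most $|G'||R| = \Oc(mn)$ equations.

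\emph{Cost of \autoref{gauss}.} Gaussian elimination on an $a \times b$ system costs $\Oc(ab\min(a,b))$. Here it is cheaper to argue via the number of nonzeros per equation. The $(g',r)$ entry of $QM - NP$ is
\[ \sum_{g} Q_{g',g}M_{g,r} - \sum_{r'} N_{g',r'}P_{r',r}, \]
so it involves at most $|G| + |R'| \le m+n$ unknowns. Each elimination step combines rows of length $\Oc(mn)$. A cruder bound is enough, though: $\Oc(mn)$ pivots, each updating $\Oc(mn)$ rows of length $\Oc(mn)$, gives $\Oc((mn)^3)$. That is too weak. The stated bound $(mn)^2(m^2+n^2)$ has to come from the structure of the system. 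Writing out the matrix would take $\Oc((mn)^2)$ memory, which is where the naive approach gets its $\Oc((m+n)^4)$ memory bound.

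\emph{Obtaining the stated time bound.} I would split the system by columns $r \in R$. For a fixed $r$, the equations indexed by $(g', r)$ involve all of $Q$, but only the single column $P_{\bullet,r}$. So we first eliminate the $P$-variables. For each $r$ this is a system with $|G'| \le n$ equations and $|R'| \le n$ local unknowns, which reduces in $\Oc(n^3)$ and leaves roughly $n$ constraints on the $\Oc(mn)$ $Q$-variables. The Schur complement, i.e.\ the resulting constraints on $Q$, then has $\Oc(mn)$ rows and $\Oc(mn)$ columns. Reducing it costs rank times rows times columns, where the rank is bounded by $\min(mn,\ldots)$. I expect this accounting to be the main obstacle. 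The intended bound $(mn)^2(m^2+n^2)$ should arise as (number of unknowns $\Oc(mn)$) $\times$ (number of equations $\Oc(mn)$) $\times$ (per-equation update cost $\Oc(m^2+n^2)$, or the rank bound $\Oc(m^2+n^2)$, since the number of variables is at most $m^2+n^2$). If the Schur-complement route does not give this cleanly, I would fall back to that rank bound: the rank is $\le \min(\#\text{eq},\#\text{var})$ and both sides are $\le (m+n)^2 = \Oc(m^2+n^2)$, so elimination costs $\#\text{eq}\cdot\#\text{var}\cdot\text{rank} = \Oc((mn)^2(m^2+n^2))$. With this, memory is dominated by the dense system matrix, which has $\Oc((m+n)^4)$ entries.

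\emph{Remaining steps.} The basis $(Q_j)_{j\in J}$ has $|J| \le mn$ elements, each a vector of length $d_Q = |\{G\le G'\}| \le mn$. The matrix $\bar N$ has $|\{R'\le G\}| \le mn$ columns of length $d_Q$, and filling it in the loop at \autoref{line:homotopy_components} costs $\Oc(d_Q\cdot mn)=\Oc((mn)^2)$. The column reduction in \autoref{homotopy_reduction} acts on a $d_Q \times (\Oc(mn))$ matrix and costs $\Oc((mn)^3)$. This is again $\Oc((mn)^2(m^2+n^2))$, since $mn \le m^2+n^2$. Its memory is $\Oc((mn)^2)\subseteq\Oc((m+n)^4)$. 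Summing the steps gives the claimed time and memory bounds.
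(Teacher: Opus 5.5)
Your final argument is correct and is essentially the paper's proof. In the fallback, you bound the equations and unknowns of $QM=NP$, and the size of the reduction matrix $[\bar N \ (Q_j)_{j\in J}]$, by $\Oc(mn)$, and charge rows $\times$ columns $\times$ rank for each elimination. The paper does the same, counting at most $m^2+n^2$ unknowns and a reduction matrix with $mn$ rows and $mn+m^2+n^2$ columns.

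However, your remark that $\Oc((mn)^3)$ is ``too weak'' is wrong. Since $2mn\le m^2+n^2$, we have $(mn)^3\le (mn)^2(m^2+n^2)$, an inequality you use yourself in your last paragraph. So the crude count already proves the time bound, and the unfinished Schur-complement detour is unnecessary.
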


\begin{proof}
Since the linear system \autoref{eq:naive} contains $m^2+n^2$ many variables, we know that $|J| \leq m^2 + n^2$. The matrix $\bar N$ has at most $mn$ columns and it follows that the system $[\bar N \ (Q_j)_{j \in J}]$ has at most $mn$ rows and $mn + m^2 + n^2$ columns.
\end{proof}

\subsection{Restriction of Chain Maps and Algorithm A}

\subparagraph{Restricting the Variables in $Q$.}
Solving the linear system \autoref{eq:naive} should be interpreted as follows: For each generator $g \in G$ of $X$ try to map it, via the matrix $Q$, to a linear combination of elements in $A[G']$ which span 
$Y_{\deg(g)}$.

\begin{observation}\label{obs:omega}
The set $\set{ x^{\deg(g)-\deg(g')}\cdot g' | \ g' \in G', \ \deg(g') \leq \deg(g) } \subset A[G']_{\deg(g)}$ generates $Y_{\deg(g)}$, so $G'$ has a subset of size $\dim_\K Y_{\deg(g)}$ whose images at $\deg(g)$ form a basis of $Y_{\deg(g)}$.
\end{observation}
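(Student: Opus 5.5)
The statement to be proved is \autoref{obs:omega}. The plan is to use surjectivity of the cover $e_0 \colon A[G'] \onto Y$ degreewise, together with an explicit description of the graded pieces of a free module.

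First I would describe the graded piece $A[G']_{\deg(g)}$. Since
\[ A[G'] = \bigoplus_{g' \in G'} A[-\deg(g')] \]
and $A[-\deg(g')]_\beta$ is one-dimensional exactly when $\beta \geq \deg(g')$, spanned by the monomial $x^{\beta - \deg(g')}$ times the basis element $g'$, the vector space $A[G']_{\deg(g)}$ has as a $\K$-basis precisely the set
\[ \set{ x^{\deg(g)-\deg(g')} \cdot g' \ | \ g' \in G', \ \deg(g') \leq \deg(g) }. \]

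Next, $e_0$ is a surjective morphism of graded modules. Hence its component $(e_0)_{\deg(g)} \colon A[G']_{\deg(g)} \to Y_{\deg(g)}$ is a surjective $\K$-linear map. It therefore sends a basis of the source onto a generating set of the target, which is the first claim.

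For the second claim, I would use that any finite generating set of a finite-dimensional vector space contains a basis. This can be extracted greedily, or by column reduction of the matrix of images. Such a basis consists of the images of $\dim_\K Y_{\deg(g)}$ of these elements. Distinct basis elements $x^{\deg(g)-\deg(g')} g'$ correspond to distinct $g'$, so the extracted basis is indexed by a subset of $G'$ of size $\dim_\K Y_{\deg(g)}$.

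There is no real obstacle here. The only point needing care is the bijection between the chosen basis elements and elements of $G'$, which holds because each $g'$ contributes at most one monomial in each degree.
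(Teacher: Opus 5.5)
Your proposal is correct and follows essentially the same reasoning the paper leaves implicit: the listed monomials form a basis of $A[G']_{\deg(g)}$, their images span $Y_{\deg(g)}$ because $A[G'] \onto Y$ is surjective in every degree, and a basis indexed by a subset of $G'$ is extracted from these images. The paper extracts this subset concretely in \autoref{alg:cokernel} by taking the rows of $N_{\leq \alpha}$ that contain no pivot.
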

Let $G'_g$ be this subset. Then, 
the vector space $\left\langle \set{ x^{\deg(g)-\deg(g')}\cdot g' | \ g' \in G_g' } \right \rangle$ is a $\{g\}$-restriction system for $A[G'] \onto Y$ (\autoref{def:restriction_system}).

\begin{example}\label{ex:running2}
Consider again the setting of \autoref{ex:running1}. The single generator of the red module $X$ sits at degree $(2,2)$, where the blue module $Y$ has dimension $1$. Although both of the generators of $Y$ have degrees smaller than $(2,2)$, any one of them would already be enough to generate $Y_{(2,2)}$. Therefore we could restrict our search for homomorphisms of presentations to those which map the red generator only to, say, the blue generator at $(0,1)$. 

\begin{figure}[H]
    \centering
    \tdplotsetmaincoords{70}{30}
\begin{tikzpicture}[tdplot_main_coords, scale=1]

  % ---- BLUE MODULE (z = 0) ----
  % base blue shape
  \fill[lightblue, opacity=1]
    (0,1,0) -- (0,4,0) -- (5,4,0) -- (5,0,0) -- (1,0,0) -- (1,1,0) -- cycle;
  \fill[darkblue, opacity=1]
    (1,1,0) -- (1,4,0) -- (1.9,4,0) -- (1.9,1.9,0) -- (5,1.9,0) -- (5,1,0) -- cycle;

    \foreach \yi/\opacity in {4/0.7, 4.3/0.5, 4.6/0.3, 5/0} {
      \pgfmathsetmacro{\yUpper}{ifthenelse(\yi+0.3 > 5, 5, \yi+0.3)}
      \fill[lightblue, opacity=\opacity]
        (0, \yi, 0) -- (0, \yUpper, 0) -- (5, \yUpper, 0) -- (5, \yi, 0) -- cycle;
      \fill[darkblue, opacity=\opacity]
        (1, \yi, 0) -- (1, \yUpper, 0) -- (2, \yUpper, 0) -- (2, \yi, 0) -- cycle;
    }
  
  % ---- RED MODULE (z = 1) ----
  \fill[lightred, opacity=1]
    (2,2,1) -- (2,4,1) -- (6,4,1) -- (6,2,1) -- cycle;

  \foreach \yi/\opacity in {4/0.7, 4.3/0.5, 4.6/0.3, 5/0} {
  \pgfmathsetmacro{\yUpper}{ifthenelse(\yi+0.3 > 5, 5, \yi+0.3)}
  \fill[lightred, opacity=\opacity]
    (2, \yi, 1) -- (2, \yUpper, 1) -- (6, \yUpper, 1) -- (6, \yi, 1) -- cycle;
    }
    
% Vertical homomorphism arrows with bold transparent double stroke
\foreach \x/\y in {2/2, 6/2} {
  % Two parallel lines for the shaft (no arrowheads)
  \draw[black, line width=1.5pt]
    (\x-0.05,\y,0.9) -- (\x-0.05,\y,0.15);
  \draw[black, line width=1.5pt]
    (\x+0.05,\y,0.9) -- (\x+0.05,\y,0.15);
  % Single arrowhead (lower)
  \draw[->, black, line width=1.5pt]
    (\x,\y,0.15) -- (\x,\y,0.05);
}

  % --- GENERATORS & RELATIONS for blue module (z = 0) ---
  \foreach \x/\y in {0/1, 1/0} {
    \fill[blue] (\x,\y,0) circle (2.5pt);
  }
  \foreach \x/\y in {1.9 / 1.9, 5/0, 5/1} {
    \node[rectangle, draw=black, fill=blue, inner sep=1.5pt] at (\x,\y,0) {};
  }

  % --- GENERATORS & RELATIONS for red module (z = 1) ---
  \foreach \x/\y in {2/2} {
    \fill[red] (\x,\y,1) circle (2.5pt);
  }
  \foreach \x/\y in {6/2} {
    \node[rectangle, draw=black, fill=red, inner sep=1.5pt] at (\x,\y,1) {};
  }

    % Axes (x and y only)
  \draw[->, thick] (-0.1,-0.1,0) -- (5.4,-0.1,0) node[below] {$x$};
  \draw[->, thick] (-0.1,-0.1,0) -- (-0.1,5.4,0) node[above] {$y$};

    % target point
  \coordinate (target) at (2,1.6);

  % label node with arrow
  \node[draw=none,anchor=south west] (label) at (2.5, - 2.8) {$\dim_\mathds{K} Y_{(2,2)} =1$};
  \draw[->] (label) -- (target);
  
\end{tikzpicture}
    \label{fig:hom1_repeat2}
\end{figure}

That is, we force in \autoref{eq:hom_system} the variable $q_{2,1}$ to be zero and instead solve

\[  \begin{bmatrix}
    q_{1,1} \\
    0 \\
\end{bmatrix} =
\begin{bmatrix}
    p_{1,1} + p_{3,1} \\
    - p_{1,1} + p_{2,1} \\
\end{bmatrix}
\]
This would cut the number of variables in $Q$ by half. A basis of this solution space is (cf \autoref{ex:running1}) 
\[ \left( \begin{bmatrix}
    1 \\
    0
\end{bmatrix}, \begin{bmatrix}
    1 \\
    1 \\
    0\\ 
\end{bmatrix}\right)  , \ \left(
 \begin{bmatrix}
    1 \\
    0
\end{bmatrix}, \begin{bmatrix}
    0 \\
    0 \\
    1\\ 
\end{bmatrix} \right) \]
After reduction as in \autoref{homotopy_reduction} we would arrive at the same matrix as in \autoref{ex:running1}, although in general, we might end up with a different basis $(Q_j)_{j \in J'}$ for $\Hom(X, \, Y)$.
\end{example}

\ignore{
\subparagraph{Restricting the Variables $P$.}

\begin{observation}
A map $Q\colon F_0 \to F'_0$ descends to a homomorphism $X \to Y$ iff it sends $\Img M$ to $\Img N$. Equivalently, in the algorithm we try to find for each relation $r \in R$ an element $P(r) = \sum_{\deg(r') \leq \deg(r)} x^{\deg(r)-\deg(r')} \{r'\} \in \bigoplus_{r' \in R'} A[-\deg(r')]$ such that $QMr = N P(r) \in \left( \Img N \right)_{\deg(r)}$. Again, we could restrict ourselves to any subset of $R'$ whose image at degree $\deg(r)$ spans $\left( \Img N \right)_{\deg(r)}$. Therefore, we can use the strategy of \autoref{ex:running2} for the module $\Img N$ to reduce the number of variable entries in $P$ in \autoref{eq:naive}.
\end{observation}

\begin{definition}
Let $\dots\oto{d_2} F_1 \oto{d_1} F_0 \oto{d_0} X \to 0$ be a minimal free resolution. For $i \in \N$ the $i$-th \emph{Syzygy}
-module of $X$ is
\[ \Omega^{i}(X) \coloneqq \Img(d_i) = \ker(d_{i-1}).  \]
In particular, there are surjections $\bar d_i \colon F_i \onto \Omega^i(X)$.
\end{definition}

\begin{remark}
    The modules $\Omega^{i}(X)$ are independent of the chosen resolution, since all minimal resolutions of the same module are isomorphic.
\end{remark}

\begin{example}\label{ex:running3}
We continue \autoref{ex:running2}. At $(6,2)$ (red dot in \autoref{fig:omega1_example}) - which is the degree of the single relation of the red module $X$ - the second module in the resolution of $Y$ ("the relations")  has dimension $3$. This number corresponds to the $3$ variable entries of $P$. The module $\Omega^1(Y)$, on the other hand, has dimension $2$ at this degree.
\begin{figure}[H]
\centering
\label{fig:omega1_example}
\tdplotsetmaincoords{70}{30}
\begin{tikzpicture}[tdplot_main_coords, scale=0.6]
% Define horizontal shift between copies
  \def\xshift{8} % adjust spacing as needed
    \def\yshift{4.6}
  % First position (originally third copy)
  \begin{scope}[shift={(0,0,0)}]
    \fill[lightblue, opacity=1]
      (0,1,0) -- (0,4,0) -- (5,4,0) -- (5,0,0) -- (1,0,0) -- (1,1,0) -- cycle;
    \fill[darkblue, opacity=1]
      (1,1,0) -- (1,4,0) -- (1.9,4,0) -- (1.9,1.9,0) -- (5,1.9,0) -- (5,1,0) -- cycle;
    \foreach \yi/\opacity in {4/0.7, 4.3/0.5, 4.6/0.3, 5/0} {
      \pgfmathsetmacro{\yUpper}{ifthenelse(\yi+0.3 > 5, 5, \yi+0.3)}
      \fill[lightblue, opacity=\opacity]
        (0, \yi, 0) -- (0, \yUpper, 0) -- (5, \yUpper, 0) -- (5, \yi, 0) -- cycle;
      \fill[darkblue, opacity=\opacity]
        (1, \yi, 0) -- (1, \yUpper, 0) -- (2, \yUpper, 0) -- (2, \yi, 0) -- cycle;
    }
    \foreach \x/\y in {0/1, 1/0} {
      \fill[blue] (\x,\y,0) circle (2.5pt);
    }
    \foreach \x/\y in {1.9 / 1.9, 5/0, 5/1} {
      \node[rectangle, draw=black, fill=blue, inner sep=1.5pt] at (\x,\y,0) {};
    }
    % Axes labels
    \draw[->, thick] (-0.1,-0.1,0) -- (5.4,-0.1,0) node[below] {$x$};
    \draw[->, thick] (-0.1,-0.1,0) -- (-0.1,5.4,0) node[above] {$y$};
  \end{scope}
  % Second position (originally first copy)
  \begin{scope}[shift={(\xshift ,\yshift,0)}]
    % --- BLUE MODULE (z = 0) ---
    \fill[lightblue, opacity=1]
      (1.9, 1.9, 0) -- (1.9, 4 ,0) -- (5,4,0) -- (5.0, 0, 0) -- (7.0, 0, 0) -- (7.0, 1.9, 0) --  (5, 1.9, 0) --cycle;
      \fill[darkerblue, opacity=1]
      (5, 1.9, 0) -- (5,4,0) -- (7,4,0) -- (7,1.9,0) -- cycle;
      \fill[darkblue, opacity = 1 ]
      (5.0, 1.9, 0) -- (5.0, 1, 0) -- (7.0, 1, 0) -- (7.0, 1.9, 0) -- cycle;
    \foreach \yi/\opacity in {4/0.7, 4.3/0.5, 4.6/0.3, 5/0} {
      \pgfmathsetmacro{\yUpper}{ifthenelse(\yi+0.3 > 5, 5, \yi+0.3)}
      \fill[lightblue, opacity=\opacity]
        (1.9, \yi, 0) -- (1.9, \yUpper, 0) -- (7, \yUpper, 0) -- (7, \yi, 0) -- cycle;
      \fill[darkerblue, opacity=\opacity]
        (5, \yi, 0) -- (5, \yUpper, 0) -- (7, \yUpper, 0) -- (7, \yi, 0) -- cycle;  
    }
    % Generators and relations
    \foreach \x/\y in {1.9 / 1.9, 5/0, 5/1.0} {
      \fill[blue] (\x,\y,0) circle (2.5pt);
    }

        \node[rectangle, draw = black, fill = lightred, inner sep = 1.5pt] at (6,2.1,0) {};
        
    % Axes labels
    \draw[->, thick] (-0.1,-0.1,0) -- (5.4,-0.1,0) node[below] {$x$};
    \draw[->, thick] (-0.1,-0.1,0) -- (-0.1,5.4,0) node[above] {$y$};
  \end{scope}
  % Third position (originally second copy)
  \begin{scope}[shift={(2*\xshift,2*\yshift,0)}]
        % --- BLUE MODULE (z = 0) ---
    \fill[lightblue, opacity=1]
      (1.9, 1.9, 0) -- (1.9, 4 ,0) -- (5,4,0) -- (5.0, 0, 0) -- (7.0, 0, 0) -- (7.0, 1.9, 0) --  (5, 1.9, 0) --cycle;

      \fill[darkblue, opacity = 1 ]
      (5.0, 4, 0) -- (5.0, 1, 0) -- (7.0, 1, 0) -- (7.0, 4, 0) -- cycle;
    \foreach \yi/\opacity in {4/0.7, 4.3/0.5, 4.6/0.3, 5/0} {
      \pgfmathsetmacro{\yUpper}{ifthenelse(\yi+0.3 > 5, 5, \yi+0.3)}
      \fill[lightblue, opacity=\opacity]
        (1.9, \yi, 0) -- (1.9, \yUpper, 0) -- (7, \yUpper, 0) -- (7, \yi, 0) -- cycle;
      \fill[darkblue, opacity=\opacity]
        (5, \yi, 0) -- (5, \yUpper, 0) -- (7, \yUpper, 0) -- (7, \yi, 0) -- cycle;  
    }
    % Generators and relations
    \foreach \x/\y in {1.9 / 1.9, 5/0, 5/1.0} {
        \fill[blue] (\x,\y,0) circle (2.5pt);
      
    }
    \foreach \x/\y in {5/1.9} {
        \node[rectangle, draw=black, fill=blue, inner sep=1.5pt] at (\x,\y,0) {};
    }

    \node[rectangle, draw = black, fill = lightred, inner sep = 1.5pt] at (6,2.1,0) {};
    
    % Axes labels
    \draw[->, thick] (-0.1,-0.1,0) -- (5.4,-0.1,0) node[below] {$x$};
    \draw[->, thick] (-0.1,-0.1,0) -- (-0.1,5.4,0) node[above] {$y$};
  \end{scope}

  % Arrows between copies  

% From copy 2 to 3 - lightblue arrow  
\draw[thick, lightblue]  
(2*\xshift - 1, 2*\yshift - 0.6, -1 + 0.03) -- (2*\xshift + 1, 2*\yshift + 0.6, -1 + 0.03);  
\draw[thick, lightblue]  
(2*\xshift - 1, 2*\yshift - 0.6, -1 - 0.03) -- (2*\xshift + 1, 2*\yshift + 0.6, -1 - 0.03);  
% Single arrowhead  
\draw[->, thick, lightblue]  
(2*\xshift + 1.02, 2*\yshift + 0.612, -1) -- (2*\xshift + 1.05, 2*\yshift + 0.63, -1);  
\end{tikzpicture}
\caption{From left to right: \quad $Y$, \quad \quad \quad $P_1 $ \quad $\Rightarrow$ \quad $\Omega^1(Y)$}
\end{figure}

A set of generators that span $\Omega^1(Y)_{(6,2)}$ is given, for example, by the relations of $Y$ at $(2,2)$ and $(5,0)$. Consequently, we could also force the variable $p_{3,1}$ in $P$ to be zero. This will again reduce the number of variables of the linear system \autoref{eq:hom_system}. Its solution is now spanned by the single vector
\[ \left( \begin{bmatrix}
    1 \\
    0
\end{bmatrix}, \begin{bmatrix}
    1 \\
    1 \\
    0\\ 
\end{bmatrix}\right). \]
This morphism of presentations already induces a basis for $\Hom(X, \, Y)$, so the reduction in \autoref{homotopy_reduction} would not be necessary. We will show that this is always the case in \autoref{prop:nullhomotopic}.
\end{example}

\subparagraph{WHAT WE SHOULD DO}

}

\subparagraph{Computing Restriction Systems.}
\autoref{obs:omega} tells us exactly how to find
the subsets $(G')^{b}_i$. For each $b \in B_i$, let $N_{\leq \deg(b)}$ be the submatrix of $N$ containing all generators and relations of degree $\leq b$. If we forget the grading, then it presents the \emph{vector space} $Y_{\deg(b)}$ and so a basis for this vector space can be found by computing the kernel of $N_{\leq \deg(b)}^t$. We have written pseudocode for \autoref{alg:cokernel} in the appendix \autoref{sec:algs}. Its input is $N$ and a degree $\alpha = \deg(b)$ and it returns both the set $(G')^{\alpha}_i$ and the cokernel of $N_{\leq \alpha}$ as a matrix $d_\alpha \colon \K[G'_{\leq \alpha}] \to Y_{\deg(b)}$.

\begin{proposition}
\autoref{alg:cokernel} returns the correct result.
\end{proposition}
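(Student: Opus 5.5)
The plan is to prove correctness in two parts. First, the ungraded matrix $N_{\leq \alpha}$, with rows $G'_{\leq\alpha}$ and columns $R'_{\leq\alpha}$, presents the vector space $Y_\alpha$. Second, the linear algebra done by \autoref{alg:cokernel} extracts from it a basis of $Y_\alpha$ indexed by a subset $(G')^{\alpha}_i \subset G'_{\leq \alpha}$, together with the quotient map $d_\alpha$.

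For the first part, evaluate the exact sequence $A[R'] \oto{N} A[G'] \to Y \to 0$ at degree $\alpha$. Evaluation at a point of $\Z^d$ is exact, because kernels and cokernels of persistence modules are computed pointwise. Hence $Y_\alpha \cong \coker\bigl(A[R']_\alpha \to A[G']_\alpha\bigr)$. Now $A[G']_\alpha = \bigoplus_{g'} A[-\deg(g')]_\alpha$ has basis $\{x^{\alpha-\deg(g')} g' \mid \deg(g') \leq \alpha\}$, so it is naturally $\K^{G'_{\leq\alpha}}$, and likewise $A[R']_\alpha \cong \K^{R'_{\leq\alpha}}$. Under these identifications the map is multiplication by the scalar submatrix $N_{\leq\alpha}$. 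Indeed, the entry $N_{g',r'}$ sends $x^{\alpha-\deg(r')}r'$ to $N_{g',r'}x^{\alpha-\deg(g')}g'$, using $x^{\alpha-\deg(r')}x^{\deg(r')-\deg(g')} = x^{\alpha-\deg(g')}$. Entries with $\deg(g') \not\leq \deg(r')$ vanish by \autoref{def:graded_matrix}, so no spurious contributions appear. This gives $Y_\alpha \cong \K^{G'_{\leq\alpha}}/\Img N_{\leq\alpha}$, which is the content of \autoref{obs:omega} made precise.

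For the second part, I rely on the duality $\coker(N_{\leq\alpha})^* \cong \ker(N_{\leq\alpha}^t)$. Let the columns of a matrix $K$ form a basis of $\ker N_{\leq\alpha}^t$. Then $d_\alpha \coloneqq K^t \colon \K^{G'_{\leq\alpha}} \to \K^{\dim Y_\alpha}$ is surjective with kernel $(\ker N^t_{\leq\alpha})^\perp = \Img N_{\leq\alpha}$, so it is the cokernel map. The algorithm then column-reduces $K^t$, or equivalently row-reduces $K$, and reads off pivot columns. These pivot columns index a subset $(G')^\alpha \subset G'_{\leq\alpha}$ of size $\operatorname{rank} K^t = \dim Y_\alpha$, whose images under $d_\alpha$ are linearly independent and therefore form a basis of $Y_\alpha$. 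Thus $\langle x^{\alpha-\deg(g')}g' \mid g' \in (G')^\alpha\rangle$ maps isomorphically onto $Y_\alpha$, i.e. it is a sharp $\{b\}$-restriction system in the sense of \autoref{def:restriction_system}.

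I expect the main obstacle to be bookkeeping rather than mathematics. The pivot selection must be phrased so that the chosen generators really are independent modulo $\Img N_{\leq\alpha}$: pivots of $K^t$ work, whereas non-pivots of a reduction of $N_{\leq\alpha}$ also work but need a separate justification. I will match this to whatever reduction the pseudocode in \autoref{sec:algs} performs. The same argument applied to the presentation of $\Omega^i(Y)$ handles the higher $i$.
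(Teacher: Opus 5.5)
Your handling of $d_\alpha$ is correct and matches the paper's. Since $d_\alpha N_{\leq\alpha}=0$ and the rows of $d_\alpha$ span $\ker N^t_{\leq\alpha}$, you get $\ker d_\alpha = (\ker N^t_{\leq\alpha})^\perp = \Img N_{\leq\alpha}$. Your Part 1, which checks that $N_{\leq\alpha}$ presents $Y_\alpha$, makes explicit what the paper only asserts before the algorithm.

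The gap is in the second output. \autoref{alg:cokernel} does not reduce $K^t$ and read off pivot columns. It puts $g$ into $G_\alpha$ exactly when the row $(N_{\leq\alpha})_{g,\bullet}$ contains \emph{no} pivot of the Gaussian elimination performed while solving $N^t_{\leq\alpha}x=0$. That is precisely the case you set aside as needing ``a separate justification'', and it is the actual content of the proposition. As written, you prove correctness of a different selection rule, not of the algorithm.

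The missing step is short and uses only what you already have. Suppose $\sum_{g\in G_\alpha}\lambda_g\,(d_\alpha)_{\bullet,g}=0$, and let $v\in\K^{\{G'\leq\alpha\}}$ be $(\lambda_g)$ extended by zero. Then $d_\alpha v=0$, so $v\in\Img N_{\leq\alpha}$, and $v$ is supported on $G_\alpha$.

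The nonzero rows of the echelon form of $N^t_{\leq\alpha}$ form a basis of $\Img N_{\leq\alpha}$ with pairwise distinct pivot positions. In any nonzero linear combination of them, the entry at the pivot of the first row with nonzero coefficient survives, because all later rows vanish there. So every nonzero vector of $\Img N_{\leq\alpha}$ is nonzero at some pivot position. Pivot positions are excluded from $G_\alpha$ by construction, hence $v=0$.

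Your Part 1 gives $|G_\alpha| = |\{G'\leq\alpha\}| - \operatorname{rank} N_{\leq\alpha} = \dim_\K Y_\alpha$, so the independent family $d_\alpha(G_\alpha)$ is a basis of $Y_\alpha$. This is the paper's argument with the counting step made explicit. Equivalently, $G_\alpha$ is the set of free variables of $N^t_{\leq\alpha}x=0$, so for the standard kernel basis $(d_\alpha)_{\bullet,G_\alpha}$ is simply the identity.
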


\begin{proof}
After the computation in \autoref{solve_transpose} it holds that $d_\alpha \cdot N_{\leq \alpha}= 0$ and so each column of $N_{\leq \alpha}$ defines a linear combination of columns of $d_\alpha$ which is zero. But $d^t_\alpha$ also spans $\ker M^t_{\leq \alpha}$ and so there is no column-vector outside of the column-span of $M_{\leq \alpha}$ with that property. Therefore, if there is a linear combination of the columns of $\left(d_\alpha\right)_{\bullet, G_\alpha}$ which evaluates to zero, it must be a linear combination of the columns of $N_{\leq \alpha}$. Each of these will be non-zero for some pivot-element which is, per construction, not in $G_\alpha$.
\end{proof}

\subparagraph{Proving \autoref{thm:main}}

We first show formally that we can always lift (uniquely) into a (sharp) restriction system.

\begin{lemma}\label{lem:lifting}
Let $(V^g)_{g \in G}$ be a $G$-restriction system for $p \colon Y \to X$. \\
\begin{minipage}{0.69\textwidth}
Any map $f\colon A[G] \to X$ has a lift $\widetilde f$ along $p$ such that $f(g) \in V^g$ and if the system is sharp, then this lift is unique.
\end{minipage}
\begin{minipage}{0.29\textwidth}
% https://q.uiver.app/#q=WzAsMyxbMCwxLCJBIFtHXSJdLFsxLDAsIlkiXSxbMSwxLCJYIl0sWzEsMiwicCIsMCx7InN0eWxlIjp7ImhlYWQiOnsibmFtZSI6ImVwaSJ9fX1dLFswLDIsImYiXSxbMCwxLCJcXHdpZGV0aWxkZSBmIiwwLHsiY3VydmUiOi0xLCJzdHlsZSI6eyJib2R5Ijp7Im5hbWUiOiJkb3R0ZWQifX19XV0=
\[\begin{tikzcd}
	& Y \\
	{A [G]} & X
	\arrow["p", two heads, from=1-2, to=2-2]
	\arrow["{\widetilde f}", curve={height=-6pt}, dotted, from=2-1, to=1-2]
	\arrow["f", from=2-1, to=2-2]
\end{tikzcd}\]
\end{minipage}
\end{lemma}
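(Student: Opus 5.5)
Since $A[G]=\bigoplus_{g\in G}A[-\deg(g)]$ is free on $G$, a module map out of $A[G]$ is the same thing as a choice, for each basis element $g$, of an element of the target in degree $\deg(g)$. Concretely, $\Hom(A[G],Z)\cong\prod_{g\in G}Z_{\deg(g)}$ via $h\mapsto (h(g))_{g\in G}$. Here $g$ denotes the basis element $1\in A[-\deg(g)]$. The plan is therefore to reduce both existence and uniqueness to pointwise linear algebra at the degrees $\deg(g)$. The condition to be met is the one intended in the statement, namely $\widetilde f(g)\in V^g$ together with $p\circ\widetilde f=f$.

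\emph{Existence.} Fix $g\in G$ and consider $f(g)\in X_{\deg(g)}$. By definition of a $G$-restriction system, $(p_{\deg(g)})_{|V^g}\colon V^g\to X_{\deg(g)}$ is surjective. Hence we can choose $v_g\in V^g\subset Y_{\deg(g)}$ with $p_{\deg(g)}(v_g)=f(g)$. By the universal property of the free module, there is a unique graded $A$-linear map $\widetilde f\colon A[G]\to Y$ with $\widetilde f(g)=v_g$, namely $\widetilde f\bigl(\sum_g a_g g\bigr)=\sum_g a_g v_g$. Then $p\circ\widetilde f$ and $f$ are two $A$-linear maps out of $A[G]$ that agree on the basis $G$, so they are equal. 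By construction $\widetilde f(g)=v_g\in V^g$.

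\emph{Uniqueness in the sharp case.} Let $\widetilde f,\widetilde f'$ be two lifts with $\widetilde f(g),\widetilde f'(g)\in V^g$ for all $g$. Then $\widetilde f(g)-\widetilde f'(g)\in V^g$, and $p_{\deg(g)}$ maps this difference to $f(g)-f(g)=0$. Sharpness says $(p_{\deg(g)})_{|V^g}$ is injective, so $\widetilde f(g)=\widetilde f'(g)$ for every $g$. Since both maps are $A$-linear and agree on a basis of the free module, $\widetilde f=\widetilde f'$.

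There is no real obstacle in this argument. The only point needing care is the identification of a map from $A[G]$ with its values on the basis elements in their prescribed degrees. This is what allows the degreewise conditions on $V^g$ to control the whole module map, and it is where gradedness enters: $v_g$ must lie in $Y_{\deg(g)}$, and $V^g\subset Y_{\deg(g)}$ guarantees this.
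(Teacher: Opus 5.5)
Your proof is correct and follows the paper's argument: a map out of $A[G]$ is a $G$-indexed tuple of elements in degrees $\deg(g)$, surjectivity of $(p_{\deg(g)})_{|V^g}$ gives preimages in $V^g$, and injectivity in the sharp case makes them unique. You also correctly read the condition as $\widetilde f(g)\in V^g$, which is what the statement intends.
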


\begin{proof}
A map from $A[G]$ to any module is just a $G$-indexed tuple of homogenous elements of degree $\deg(g)$. If $(p_{\deg(g)})_{|V^g}$ is surjective, then we can choose the preimages to be in its domain. If its an isomorphism, then this preimage has to be unique.
\end{proof}

\subparagraph{Thickness and Sparsity.}
Before constructing \autoref{alg:restricted}, we use \autoref{lem:lifting} to show that every module of low thickness has a sparse minimal presentation.
\begin{proof}[Proof of \autoref{prop:thickness_sparsity}]
By induction, we can assume that $X$ is presented by a matrix $[M \ N] \in \K^{G \times ( R \ \cup \ R')}$ with $M$ already satisfying the condition and $R' = (\omega, \dots, \omega)$ for an $\omega \in \Z^d$ with $r < \omega$ for each $r \in R$. 
These matrices induce a short exact sequence
$ 0 \to A^k[-\omega] \oto{f} \coker M \to X \to 0$.
It follows that $\dim_\K (\coker M)_\omega$ = $\dim_\K X_\omega + k$.

By \autoref{obs:omega} we can find a subset $G_\omega \subset G$ of size $\dim_\K X_\omega + k$ that spans a sharp $(\bigcup_{j = 1}^k \omega)$-restriction system for the map $A[G] \onto \coker M$. By \autoref{lem:lifting}, $f$ has a lift $\widetilde N \colon A^k[-\omega] \to A[G]$ which factors through $A[{G_\omega}]$. $\widetilde N$ is a graded matrix of size $(\dim_\K X_\omega + k)\times k$, with all columns of the same degree. Bringing it into reduced column echelon form by changing the basis of $A^k[-\omega]$, we construct a graded matrix $\bar N \colon A^k[-\omega] \to A[{G_\omega}] \into A[G]$ where every column has at most $\dim_\K X_\omega+1$ non-zero entries. By
\cite[Proposition 4.14]{djk_arXiv}, $[M \ \bar N]$ is a presentation for $X$.
\end{proof}

This proof also implies a way to compute this sparse matrix, but it would require using \autoref{alg:cokernel} for every set of relations of the same degree, which puts the runtime in $\Oc(n^{4})$. For $d=2$, the minimisation procedure described in \cite{lw-computing, mpfree} brings a presentation in this form using column-reduction in $\Oc(n^3)$ time.

\begin{tcolorbox}[thmbox]
    \mainthm*
\end{tcolorbox}

\begin{proof}
We can assume that the resolution $(A[C_i], e_i)$ is minimal by choosing appropriate subsets of $C_i$.
Recall that each $e_i$ induces a map onto its image $\bar e_i \colon A[C_i] \to \Omega^{i}(Y)$.

With \autoref{obs:omega}, we find the subsets $C_i^b$ of size $\dim_\K (\Img \bar e_i)_b = \dim_\K \Omega^{i}(Y)_{\deg(b)}$ for each $b \in B_i$. By construction, $(\bar e_i)_{\deg(b}$ restricts to an isomorphism on the sub vector space \\
$\K \{ x^{\deg(b)-\deg(c)}c \ | \ c \in C^b_i \} \subset A[C^b_i]_{\deg(b)}$. These are sharp restriction systems for each $\bar e_i$, so by \autoref{lem:lifting} we can lift any $f\colon X \to Y$ uniquely into these.
\end{proof}

\ignore{
We prove the first part of \autoref{thm:main} which says that one can always lift maps into a restriction system.

\begin{proposition}\label{lem:restriction}
Let $F_\bullet \to X, \, F'_\bullet \to Y$ be minimal free resolutions and $B_* \subset F_*$ a basis.
If $\{V^b_*\}_{b \in B_*}$ is a restriction system for $(F_*, F'_*)$, 
then every homomorphism $f \colon X \to Y$ has a lift $f_* \colon F_* \to F'_*$ 
which satisfies, for each $b \in B_i$ and each $i \in \N$ ,
\[ f_i(b) \in V^b_i.\]
\end{proposition}

\begin{proof}
Let $\widetilde f_*$ be any lift of $f$. Assume that the condition is already true for all $i < j$ for some $j \in \N$. For each $b \in B_{j}$, $e_j \circ \widetilde f_j(b) \in \Img e_j \eqqcolon \Omega^j(Y)$. Since the map ${e_j}_{| V_j^b} \colon V^b_j \to \Omega^{j}(Y)$ is surjective at $\deg(b)$ we can find $f_b \in (V^b_j)_{\deg(b)}$ such that $e_j (f_b) = e_j \circ \widetilde f_j(b)$. Then $B_j$ is a basis, so the $f_b$ assemble to a map $f_j \colon F_j \to F'_j$.

It follows that $e_j \circ ( f_j - \widetilde  f_j) = 0 \ (\ast)$ and so by exactness of $F_*'$ there is a map $h_j \colon F_j \to F'_{j+1}$ such that $ f_j - \widetilde  f_j = e_{j+1} \circ h_j \ (\ast \ast)$. We define $f_{j+1} \coloneqq \widetilde f_{j+1} + h_j \circ d_{j+1}$ and $f_i \coloneqq \widetilde f_i$ for all $i > j+1$ and $i < j$. $f_*$ is a chain map:
\begin{align*} 
e_j \circ f_j \overset{(\ast)}{=}& e_j \circ \widetilde f_j = \widetilde f_{j-1} \circ d_{j} = f_{j-1} \circ d_{j} \\
e_{j+1} \circ f_{j+1} =& e_{j+1} \circ \left( \widetilde f_{j+1} + h_j \circ d_{j+1} \right) \overset{(\ast \ast)}{=} f_j \circ d_{j+1} \\
e_{j+2} \circ f_{j+2} =& \widetilde f_{j+1} \circ d_{j+2} = \left( f_{j+1} - h_j \circ d_{j+1} \right) \circ d_{j+2}  = f_{j+1} \circ d_{j+2}.
\end{align*}
$f_*$ is a lift of $f$ and satisfies the condition for all $i \leq j$. 
\end{proof}

We now show that if the restriction system is sharp, every null-homotopic lift into the system is $0$. This will imply that every 
linearly independent set of graded matrices $(Q_j)_{j \in J} \in \Hom\left( A[G], A[G']\right)$ which satisfies the condition of \autoref{thm:main} already forms a basis and the reduction step in \autoref{homotopy_reduction} of \autoref{alg:direct} becomes obsolete.

\begin{proposition}\label{prop:nullhomotopic}
If $f$ satisfies the condition of \autoref{lem:restriction} and the restriction system is sharp, then $f_*$ is null-homotopic if and only if $f_* = 0$.
\end{proposition}

\begin{proof}
Let $h_* \colon f_* \Rightarrow 0$ be a null-homotopy, i.e. $h_i\colon F_i \to F'_{i+1}$ and $\forall i \in \N \colon f_i = e_{i+1} \circ h_{i} + h_{i-1} \circ d_i$. 
Let $b \in B_i$, then by assumption $f_i(b) \in V_i^b$ and $( e_i)_{\deg(b)}$ is injective when restricted to $V_i^b$. Therefore, it suffices to prove that $\forall i \in \N, e_i \circ f_i = 0$. We prove that $f_*=0$ by induction.

For the induction start, recall that $e_0 \colon F'_0 \to Y$ denotes the first part of the free resolution. It holds that $e_0 \circ f_0 = e_0 \circ e_1 \circ h_0= 0$. We conclude that $f_0 = 0$ and $e_1 \circ h_0 = 0$, too.

Assume now that for $i \in \N$ it holds that
$f_{i-1} = 0$.
It follows that
$e_i \circ f_i = e_i \circ ( e_{i+1} \circ h_i + h_{i-1} \circ d_i) = - h_{i-2} \circ d_{i-1} \circ d_i  = 0.$
\end{proof}

}

\subparagraph{Modules with low $\thick{\Omega^1(X)}$.}
\begin{example}
Consider the following modules $X$ (red) and $Y$ (blue). At the degree $(4,4)$ where the single relation of $Y$ lives, the free module of minimal relations of $Y$ has dimension $6$. Therefore, in the system $QM = NP$, the matrix $P$ could contain $6$ non-zero entries which become variables in this system. The module $\Omega^1(Y)$ is, however, only of dimension $2$ at $(4,4)$. 
\begin{figure}[H]
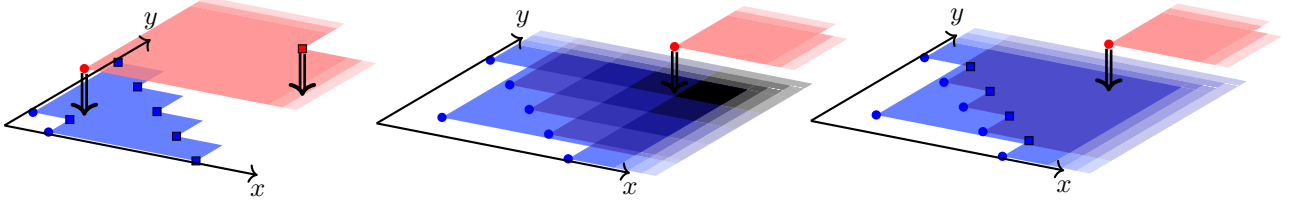

\centering
\label{fig:omega1}
\include{include/fig_omega1}
\caption{From left to right: \quad  $X$, $Y$ \quad $X_1$, $Y_1$ \quad $\Omega^1(X)$, $\Omega^1(Y)$}
\end{figure}
\end{example}

\subparagraph{The Restricted Computation of Homomorphisms of Presentations} \hspace{1em} \newline

\begin{algorithm}[H]
\renewcommand{\thealgocf}{A} % override label
\caption{Restricted Computation of Homomorphisms of Presentations.}
\label{alg:restricted}
\DontPrintSemicolon
\KwIn{Presentation matrices $M \in \K^{G \times R}$, $N \in \K^{G' \times R'}$}
\KwOut{Graded matrices $\{Q_i\}_{i \in I} \in \K^{G' \times G}$ which descend to a basis of $\Hom(X, \, Y)$}

\For{$\alpha \in \deg(G)$}{
    $d_\alpha, \, G'_\alpha   \gets$ \autoref{alg:cokernel}$(N, \alpha)$\label{line:cokernel}\;
    \For{$g \in G$ with $\deg(g) = \alpha$}{
    $Q_{g', g} \gets \begin{cases}
        \text{a variable } q_{g',g} &\text{ if } g' \in G'_\alpha \\
        0 &\text{ otherwise}
    \end{cases}$
    }
} 
$O \in \K^{R' \times S} \gets \ker N$\label{eq:kernel} \;
\For{$\alpha \in \deg(R)$}{
    $e_\alpha, \, R'_\alpha  \gets$ \autoref{alg:cokernel}$(O, \alpha)$\label{line:cokernel2}\;
    \For{$r \in R$ with $\deg(r) = \alpha$}{
    $P_{r', r} \gets \begin{cases}
        \text{a variable } p_{r',r} &\text{ if } r' \in R'_\alpha \\
        0 &\text{ otherwise}
    \end{cases}$
    }
}
Calculate a basis $(Q_j, \, P_j)_{j \in J} \in \K^{G' \times G}, \K^{R' \times R}$ of the solution space of the system $Q M = N P$ \label{line:restricted2}\;
Treat the tuple $(Q_j)_{j \in J}$ as a matrix in $\K^{ d_Q \times J}$ and column-reduce it\label{line:reduction}\;
$J' \gets \{j \in J \, | \ Q_j \neq 0 \}$\;
\KwRet{$\left(Q_j\right)_{j \in J'}$}
\end{algorithm}

\begin{proposition}\label{prop:A_runtime}
\autoref{alg:restricted} runs in
$\Oc( m^3 n \left( \thick{Y} + \thick{  \Omega^{1}(Y) } \right)^2 + mn^3 + T_{\ker}(d, n))$ time where $T_{\ker}$ is the time to compute kernels of $d$-parameter graded matrices.
\end{proposition}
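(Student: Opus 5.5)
We bound the cost of each line of \autoref{alg:restricted} separately, with $m = |G|+|R|$ and $n = |G'|+|R'|$ as before. Write $t \coloneqq \thick{Y}$ and $t_1 \coloneqq \thick{\Omega^1(Y)}$.

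\textbf{Cokernel calls.} There are at most $m$ distinct degrees in $\deg(G) \cup \deg(R)$, so \autoref{alg:cokernel} is called at most $m$ times in \autoref{line:cokernel} and \autoref{line:cokernel2}. Each call on $N$ reduces the transpose of a submatrix $N_{\leq \alpha}$ of size at most $n \times n$, which costs $\Oc(n^3)$. For the calls on $O$ we must control the size of $O$. Since $N$ is minimal, the kernel of the map between free modules of rank at most $n$ is computed once in \autoref{eq:kernel} in time $T_{\ker}(d,n)$. I expect to argue that the restricted submatrices $O_{\leq \alpha}$ can also be handled in $\Oc(n^3)$, perhaps after minimising $O$ inside the $T_{\ker}$ budget. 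Together this gives $\Oc(mn^3 + T_{\ker}(d,n))$.

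\textbf{Size of the restricted system.} By construction, and by \autoref{thm:main} with \autoref{obs:omega}, every $g \in G$ has exactly $|G'_{\deg(g)}| = \dim Y_{\deg(g)} \leq t$ variables in its column of $Q$. Likewise, every $r \in R$ has $|R'_{\deg(r)}| = \dim \Omega^1(Y)_{\deg(r)} \leq t_1$ variables in $P$. So the system $QM = NP$ has at most $V \coloneqq m(t+t_1)$ unknowns.

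The number of equations is the number of entries of $QM - NP \in \K^{G' \times R}$ that are not forced to vanish, so at most $|G'|\cdot|R| \leq mn$. Gaussian elimination on an $E \times V$ system costs $\Oc(E V \min(E,V)) \leq \Oc(E V^2)$. This gives $\Oc\bigl(mn \cdot m^2 (t+t_1)^2\bigr) = \Oc(m^3 n (t+t_1)^2)$ for \autoref{line:restricted2}.

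\textbf{Assembling the system and the final reduction.} Building the system means evaluating $QM$ and $NP$ symbolically. Each variable $q_{g',g}$ contributes to at most $|R|$ equations, and each variable $p_{r',r}$ to at most $|G'|$ equations. Assembly therefore costs $\Oc(V(m+n))$, which is dominated. The final column reduction in \autoref{line:reduction} acts on at most $|J| \leq V$ columns of length at most $mt$. This costs $\Oc(m t \cdot V^2)$, which is again within the bound since $t \leq n$ may be assumed.

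\textbf{Main obstacle.} The delicate step is bounding the work involving $O = \ker N$. Its number of columns is a second Betti number, and I must check that it is at most polynomially bounded by $n$, so that the $m$ cokernel calls on $O$ fit into $\Oc(mn^3)$. My first attempt is to note that for the relevant $d$ (and in general via Hilbert's syzygy theorem with the minimal kernel) the kernel basis has $\Oc(n)$ elements. If this fails in general, I would absorb the extra cost into $T_{\ker}(d,n)$ by defining that quantity to include the size of the output.
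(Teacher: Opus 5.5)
Your accounting is the same as the paper's: at most $m$ calls of \autoref{alg:cokernel} at $\Oc(n^3)$ each, and one kernel computation in time $T_{\ker}(d,n)$. The system then has at most $m(\thick{Y}+\thick{\Omega^1(Y)})$ unknowns and $mn$ equations, so it is solved in $\Oc(m^3n(\thick{Y}+\thick{\Omega^1(Y)})^2)$. You also account for assembling the system and for the reduction in \autoref{line:reduction}, which the paper skips. The step you single out is a real hole, but the paper's proof has it too. It only asserts that each call of \autoref{alg:cokernel} reduces an $\Oc(n)\times\Oc(n)$ matrix. For the calls on $O=\ker N$ in \autoref{line:cokernel2}, that presupposes $|S|=\Oc(n)$.

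Neither of your patches closes it.

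\emph{Hilbert's syzygy theorem.} It bounds the length of a resolution, not the number of generators of $\ker N$. For $d\geq 3$ this module need not be free. Already for a monomial ideal in four variables with $n$ generators it can need of order $n^2$ generators. To see this, take exponent vectors from a $4$-dimensional realiser of $K_{n/2,n/2}$. Then no third generator divides the lcm of two generators from opposite sides, so each of these $n^2/4$ pairs gives a minimal syzygy.

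\emph{Folding the output size into $T_{\ker}(d,n)$.} This does not help either. After $O$ is computed, the algorithm still reduces $O_{\le\alpha}$, which has up to $n$ rows and $|S|$ columns, for each of up to $m$ degrees $\alpha\in\deg(R)$. The total cost is up to order $mn^2|S|$, a factor $mn$ more than the size of $O$.

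Two arguments do work.
\begin{itemize}
\item For $d=2$: $\ker N$ is projective because $\K[x_1,x_2]$ has global dimension $2$, hence free. Being a submodule of $A[R']$, its rank is at most $|R'|$; compare dimensions at a degree above all its generators. So a basis $S$ has $|S|\le n$.
\item For arbitrary $d$: you can bypass $O$ altogether. $\Omega^1(Y)_\alpha=(\Img N)_\alpha$ is the column space of the scalar matrix $N_{\le\alpha}$. The columns surviving a column reduction of $N_{\le\alpha}$ therefore form a set $R'_\alpha$ of size $\dim_\K\Omega^1(Y)_\alpha$ on which $(\bar e_1)_\alpha$ is an isomorphism, at cost $\Oc(n^3)$ per degree. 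This gives the $mn^3$ term independently of $|S|$. It also shows that the kernel in \autoref{eq:kernel} is not needed to restrict $P$.
\end{itemize}
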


\begin{proof}
The computations in \autoref{line:cokernel} and \autoref{line:cokernel2} reduce an $\Oc(n) \times \Oc(n)$-matrix at most $m$ times and \autoref{eq:kernel} adds $T_{\ker}(n)$ to the runtime.

The sets $\{G'_\alpha\}$ and $\{R'_\alpha\}$ contain exactly $\dim_\K Y_\alpha$ and $\dim_\K \Omega^{1}Y_\alpha$ elements and span the first two parts of a sharp restriction system. The linear system in \autoref{line:restricted2} contains at most $m\left( \thick{Y} + \thick{ \Omega^{1}(Y)} \right)$ variables and $mn$ equations. 
\end{proof}

\begin{proof}[Proof of \autoref{thm:alg_a}]
By \autoref{thm:main}, every homomorphism has a lift to the restriction systems calculated with \autoref{alg:cokernel} and since the lift is unique the solution space to the system $QM = NP$ descends to a basis of $\Hom(X,Y)$. For the runtime, set $m=n$ in \autoref{prop:A_runtime}.
\end{proof}

\subparagraph{Restricting only $Q$}
Since the computation of the kernel in \autoref{eq:kernel} may be costly in practice - especially for higher-dimensional modules - and $\max \dim \Omega^1(Y)$ may be large - even for pointwise low-dimensional modules - we propose another algorithm that mixes the two approaches.
It is this algorithm which we have already implemented and used in \texttt{AIDA}, \cite[3.2 "Fast Hom-Computation" and 8, Table 1]{djk}. Pseudocode can be found in the appendix \autoref{sec:algs}.

\ignore{

\section{Algorithm A, issac version}

We review some basic homological algebra to explain how to compute a \emph{basis} for $\Hom(X,Y)$ from \autoref{eq:naive}.

\begin{lemma}[Fundamental Lemma of Homological Algebra]\label{lem:fundamental}
\cite[III. Theorem 6.1]{maclane} 
Let $f \colon X \to Y$ be any homomorphism of persistence modules and $(F_*, d_*) \to X, \ (F'_*, e_*) \to Y$ free resolutions.
Then there exists a chain map $\{f_i\}_{i \in \N}$ which lifts $f$:

% https://q.uiver.app/#q=WzAsMTAsWzMsMCwiWCJdLFszLDEsIlkiXSxbMiwwLCJYXzAiXSxbMiwxLCJZXzAiXSxbMSwwLCJYXzEiXSxbMSwxLCJZXzEiXSxbMCwwLCJcXGRvdHMiXSxbMCwxLCJcXGRvdHMiXSxbNCwwLCIwIl0sWzQsMSwiMCJdLFs2LDQsImRfMiJdLFs3LDUsImVfMiJdLFs0LDIsImRfMSJdLFs1LDMsImVfMSJdLFsyLDAsImRfMCJdLFszLDEsImVfMCJdLFswLDEsImYiXSxbMiwzLCJmXzAiXSxbNCw1LCJmXzEiXSxbMCw4XSxbMSw5XSxbNiw3LCJcXGRvdHMiLDEseyJzdHlsZSI6eyJib2R5Ijp7Im5hbWUiOiJub25lIn0sImhlYWQiOnsibmFtZSI6Im5vbmUifX19XV0=
\[\begin{tikzcd}[ampersand replacement=\&]
	\dots \& {F_1} \& {F_0} \& X \& 0 \\
	\dots \& {F'_1} \& {F'_0} \& Y \& 0
	\arrow["{d_2}", from=1-1, to=1-2]
	\arrow["\dots"{description}, draw=none, from=1-1, to=2-1]
	\arrow["{d_1}", from=1-2, to=1-3]
	\arrow["{f_1}", from=1-2, to=2-2]
	\arrow["{d_0}", from=1-3, to=1-4]
	\arrow["{f_0}", from=1-3, to=2-3]
	\arrow[from=1-4, to=1-5]
	\arrow["f", from=1-4, to=2-4]
	\arrow["{e_2}", from=2-1, to=2-2]
	\arrow["{e_1}", from=2-2, to=2-3]
	\arrow["{e_0}", from=2-3, to=2-4]
	\arrow[from=2-4, to=2-5]
\end{tikzcd}\]

If $f_i$ and $\widetilde f_i$ are chain maps which both lift $f$ then they are homotopic. That is there are maps $h_i \colon F_i \to F'_{i+1}$ satisfying
\[ f_i - \widetilde f_i = e_{i+1} \circ h_i + h_{i-1} \circ d_i.\]
\end{lemma}

The first part of \autoref{lem:fundamental} guarantees the existence of the set of matrices $\{Q_j\}_{j \in J}\colon \K^{G' \times G}$ which we require.
If we want it to form a basis, we can use the second part of \autoref{lem:fundamental}.

\begin{corollary}\label{cor:nullhomotopic}
    A tuple of graded matrices $\{Q_j\}_{j \in J}$ in $\K^{G' \times G}$ descends to a linearly dependent set of homomorphisms $X \to Y$ if and only if there are $\lambda_j \in \K$, which are not all zero, and a graded matrix $H \colon A[G] \to A[R']$ such that
\begin{align}\label{eq:nullhomotopic}
\sum\limits_{j \in J} \lambda_j Q_j = N H.
\end{align}
\end{corollary}

\begin{proof}
The induced set of homomorphisms $\{\widetilde Q_i\}$ is linearly dependent iff a linear combination of any choice of lifts induces the zero map. This is equivalent to the existence of the homotopy $H$.  
\end{proof}

\begin{algorithm}
\caption{Computation from $\Hom(M,N)$.}
\label{alg:direct}
\DontPrintSemicolon
\KwIn{Graded matrices $M \in \K^{G \times R}$, $N \in \K^{G' \times R'}$}
\KwOut{Graded matrices $\{Q_i\}_{i \in I} \in \K^{G' \times G}$ which descend to a basis of $\Hom(X, \, Y)$}
\label{gauss} Calculate a basis $(Q_j, \, P_j)_{j \in J}$ for the solution of  \autoref{eq:naive} \;
Impose any order on $\set{G \leq G' }$ and set $d_Q \gets |\set{G \leq G' }|$\;
Treat the tuple $(Q_j)_{j \in J}$ as a matrix in $\K^{ d_Q \times J}$ \;
Initialise a matrix $\bar N \in \K^{d_Q \times \set{R' \leq G}}$ \;
\For{$(r', g) \in \set{R' \leq G}$}{ \label{line:homotopy_components}
    Initialise $N^{r',g} \in \K^{d_Q}$\;
    \For{$(\bar g, g') \in \set{G \leq G'}$}{
        \If{$\bar g = g$ }{
            $\left( N^{r',g}\right)_{g',\bar g} \coloneqq N_{g',r'}$\;
        } \Else {
            $\left( N^{r',g}\right)_{g',\bar g} \coloneqq 0$ \;
        }
    }
    $\bar N_{\bullet, (r',g)} \gets N^{r',g}$\;
}

Column-reduce the matrix $\left[ \bar N \ \ (Q_j) \right] \in \K^{d_Q \times (\{G \leq R'\} \, \cup \, J )}$\; \label{homotopy_reduction} 
$J' \gets \{j \in J \, | \ Q_j \neq 0 \}$\;
\KwRet{$\left(Q_j\right)_{j \in J'}$}
\end{algorithm}

\ignore{

\begin{algorithm}[H]
\renewcommand{\thealgocf}{B} % override label
\caption{Computation via Hom-Exactness ISSAC}
\label{alg:hom_exact_issac}
\DontPrintSemicolon
\KwIn{Presentation matrices $M \in \K^{G \times R}$, $N \in \K^{G' \times R'}$}
\KwOut{Graded matrices $\{Q_i\}_{i \in I} \in \K^{G' \times G}$ which descend to a basis of $\Hom(X, \, Y)$}
\For{$\alpha \in \deg(G') \cup \deg(R')$}{
    $e_\alpha, \, G'_\alpha   \gets$ \autoref{alg:cokernel}$(N, \alpha)$\label{line:cokernel3}\;
} 
Initialise $\vec Y \in \K^{ \left( \sum_R \dim Y_{\deg(r)} \right) \times  \left( \sum_G \dim Y_{\deg(g)} \right)}$ \;
\For{$(g,r) \in \{G \leq R\}$ }{
        ${\vec Y}_{r,g} \coloneqq \left(e_{\deg(r)}\right)_{|G'_{\deg(g)}}$\;
}
$M^t \otimes {\vec Y} \gets \left(M_{g,r} \cdot {\vec Y}_{r,g}\right)_{{G \leq R}} \in \K^{ \left( \sum_R \dim Y_{\deg(r)} \right) \times  \left( \sum_G \dim Y_{\deg(g)} \right)}$\;
$K \in \K^{\sum_G \dim Y_{\deg(g)} \times J } \gets \ker M^t \otimes {\vec Y}$
\label{line:kernel_exact}\;
\For{ $j \in J$}{
    Initialise $Q_j \in \K^{G' \times G}$\;
    \For{$g \in G$}{
        \For{$g' \in G'$}{
            \If{$\deg(g') \geq \deg(g)$}{
                $(Q_j)_{g', g} \gets K_{j, (g, g')}$\;
            }
            \Else{ $(Q_j)_{g', g} \gets 0$
            }
        }     
    }
}
\Return ${\left(  Q_j \right)_{j \in J} }$\;
\end{algorithm}

}

\begin{proposition}
\autoref{alg:direct} returns a basis of $\Hom(X, \, Y)$.
\end{proposition}

\begin{proof}
By \autoref{cor:nullhomotopic} we only need to show that there are no $ (\lambda_j)_{j \in J'}$ and no $H \colon A[G] \to A[R']$ such that the output $(Q_j)_{j \in J'}$ satisfies \autoref{eq:nullhomotopic}. So assume that these did exist. Let $j_0$ be the largest index, such that $\lambda_{j_0} \neq 0$. Then
\[ Q_{j_0} = \frac{1}{\lambda_{j_0}} \sum\limits_{j \leq j_0} - \lambda_j Q_j + \frac{1}{\lambda_{j_0}} \sum\limits_{ \deg(g) \leq \deg(r')}
H_{r', g}\left[ 0 \dots 0 \ \underset{\text{column } g}{N_{\bullet, r'}} \ 0 \dots 0 \right].
\]
Observe that the matrices to the right correspond exactly to the vectors $\bar N_{\bullet, (r', g)}$ and so the reduction in \autoref{homotopy_reduction} would have cleared the column at $j_0$.
\end{proof}

\begin{proposition}
With standard matrix reduction, \autoref{alg:direct} needs $\Oc\left( (mn)^2(m+n)^2 \right)$ time.
\end{proposition}

\begin{proof}

Since the linear system \autoref{eq:naive} contains at most $m^2+n^2$ many variables, we know that also $|J| \leq m^2 + n^2$. The matrix $\bar N$ has at most $mn$ columns and it follows that the system $[\bar N \ (Q_j)_{j \in J}]$ has at most $mn$ rows and $mn + m^2 + n^2$ columns.
\end{proof}

}

\ignore{
\section{Restriction of Chain Maps and Algorithm A ISSAC}

Solving the linear system \autoref{eq:naive} should be interpreted as follows: For each generator $g \in G$ of $X$, try to map it, via the matrix $Q$, to a $\K$-linear combination of elements in $\set{G'_{\leq \deg(g)}}$, the subset of generators of $Y$ of degree compatible ($\leq$) with $g$.

\begin{observation}\label{obs:omega}
Since the set $\set{ x^{\deg(g)-\deg(g')}\cdot g' | \ g' \in G'_{\leq \deg(g)} } \subset \bigoplus_{g' \in G'} A[-\deg(g')]_{\deg(g)}$ generates $Y_{\deg(g)}$, it has a subset of size $\dim_\K Y_{\deg(g)}$ which maps to a basis.
\end{observation}
Let $G_g \subset G'_{\leq \deg(g)}$ be the chosen generators from this subset. In the language of \autoref{def:restriction_system} these generators span the space $V_0^g$. We use this formulation to avoid a concrete choice of basis.

The proof of \autoref{thm:main} will be split in two steps.
First, we will prove that given a restriction system $ \{V^b_*\}_{b \in B_*}$ we can force the elements in $B_i$ to be mapped to $V^b_i$. Then, we show that if the restriction system is sharp, every null-homotopic lift into the system is $0$. This will imply that every 
linearly independent set of graded matrices $(Q_j)_{j \in J} \in \Hom\left( A[G], A[G']\right)$ which satisfies the condition of \autoref{thm:main} already forms a basis and the reduction step in \autoref{homotopy_reduction} of \autoref{alg:direct} becomes obsolete.

\begin{proof}[Proof of \autoref{thm:main}]
$f_i(b) \in V^b_i$:

Let $\widetilde f_*$ be any lift of $f$. Assume that the condition is already true for all $i < j$ for some $j \in \N$. For each $b \in B_{j}$, $e_j \circ \widetilde f_j(b) \in \Img e_j \eqqcolon \Omega^j(Y)$. Since the map $V^b_j \to \Omega^{j}(Y)$ is surjective at $\deg(b)$ we can find $f_b \in (V^b_j)_{\deg(b)}$ such that $e_j (f_b) = e_j \circ \widetilde f_j(b)$. Then $B_j$ is a basis, so the $f_b$ assemble to a map $f_j \colon F_j \to F'_j$.

It follows that $e_j \circ ( f_j - \widetilde  f_j) = 0 \ (\ast)$ and so by exactness of $F_*'$ there is a map $h_j \colon F_j \to F'_{j+1}$ such that $ f_j - \widetilde  f_j = e_{j+1} \circ h_j \ (\ast \ast)$. We define $f_{j+1} \coloneqq \widetilde f_{j+1} + h_j \circ d_{j+1}$ and $f_i \coloneqq \widetilde f_i$ for all $i > j+1$ and $i < j$. $f_*$ is a chain map:
\begin{align*} 
e_j \circ f_j \overset{(\ast)}{=}& e_j \circ \widetilde f_j = \widetilde f_{j-1} \circ d_{j} = f_{j-1} \circ d_{j} \\
e_{j+1} \circ f_{j+1} =& e_{j+1} \circ \left( \widetilde f_{j+1} + h_j \circ d_{j+1} \right) \overset{(\ast \ast)}{=} f_j \circ d_{j+1} \\
e_{j+2} \circ f_{j+2} =& \widetilde f_{j+1} \circ d_{j+2} = \left( f_{j+1} - h_j \circ d_{j+1} \right) \circ d_{j+2}  = f_{j+1} \circ d_{j+2}.
\end{align*}
$f_*$ is a lift of $f$ and satisfies the condition for all $i \leq j$.

If, in addition, $f_*$ is nullhomotopic then $f_*=0$:

Let $h_* \colon f_* \Rightarrow 0$ be a null-homotopy, i.e. $h_i\colon F_i \to F'_{i+1}$ and $\forall i \in \N \colon f_i = e_{i+1} \circ h_{i} + h_{i-1} \circ d_i$. 
Let $b \in B_i$, then by assumption $f_i(b) \in V_i^b$ and $( e_i)_{\deg(b)}$ is injective when restricted to $V_i^b$. Therefore, it suffices to prove that $\forall i \in \N, e_i \circ f_i = 0$. We prove that $f_*=0$ by induction.

For the induction start, recall that $e_0 \colon F'_0 \to Y$ denotes the first part of the free resolution. It holds that $e_0 \circ f_0 = e_0 \circ e_1 \circ h_0= 0$. We conclude that $f_0 = 0$ and $e_1 \circ h_0 = 0$, too.

Assume now that for $i \in \N$ it holds that
$f_{i-1} = 0$.
It follows that
$e_i \circ f_i = e_i \circ ( e_{i+1} \circ h_i + h_{i-1} \circ d_i) = - h_{i-2} \circ d_{i-1} \circ d_i  = 0.$

\end{proof}

\subparagraph{The Restricted Computation of Homomorphisms of Presentations}
\autoref{obs:omega} also implies a strategy to find a basis for the subspaces $V_i^g$: Restrict the presentation $N$ of $Y$ to a presentation $N_{\leq \deg(g)}$ of $Y_{\deg(g)}$. Then reduce this matrix to find the set $G'_g \subset G'$ such that $A[G'_g] \into A[G'] \oto{\coker N_{\leq \deg(g)}} Y$ is an isomorphism at $\deg(g)$. The details are found in the appendix in \autoref{alg:cokernel}, which returns the pair $\left( \coker N_{\leq \deg(g)}, G'_g \right)$.

\begin{algorithm}[ht]
\renewcommand{\thealgocf}{A} % override label
\caption{Restricted Computation of Momorphisms of Presentations.}
\label{alg:restricted}
\DontPrintSemicolon
\KwIn{Presentation matrices $M \in \K^{G \times R}$, $N \in \K^{G' \times R'}$}
\KwOut{Graded matrices $\{Q_i\}_{i \in I} \in \K^{G' \times G}$ which descend to a basis of $\Hom(X, \, Y)$}

\For{$\alpha \in \deg(G)$}{
    $d_\alpha, \, G'_\alpha   \gets$ \autoref{alg:cokernel}$(N, \alpha)$\label{line:cokernel}\;
    \For{$g \in G$ with $\deg(g) = \alpha$}{
    $Q_{g', g} \gets \begin{cases}
        \text{a variable } q_{g',g} &\text{ if } g' \in G'_\alpha \\
        0 &\text{ otherwise}
    \end{cases}$
    }
} 
$O \in \K^{R' \times S} \gets \ker N$\label{eq:kernel} \;
\For{$\alpha \in \deg(R)$}{
    $e_\alpha, \, R'_\alpha  \gets$ \autoref{alg:cokernel}$(O, \alpha)$\label{line:cokernel2}\;
    \For{$r \in R$ with $\deg(r) = \alpha$}{
    $P_{r', r} \gets \begin{cases}
        \text{a variable } p_{r',r} &\text{ if } r' \in R'_\alpha \\
        0 &\text{ otherwise}
    \end{cases}$
    }
}
Calculate a basis $(Q_j, \, P_j)_{j \in J}$ of the solution of $Q M = N P$ \label{line:restricted2}\;
\KwRet{$\left(Q_j\right)_{j \in J}$}
\end{algorithm}

\begin{proposition}\label{prop:A_runtime}
\autoref{alg:restricted} computes a basis of $\Hom(X,Y)$ in
$\Oc( m^3 n \left( \thick{Y} + \thick{  \Omega^{1}(Y) } \right)^2 + mn^3 + T_{\ker}(d))$ time where $T_{\ker}$ is the time to compute kernels of $d$-parameter graded matrices.
\end{proposition}

\begin{proof}
The computations in \autoref{line:cokernel} and \autoref{line:cokernel2} reduce an $\Oc(n) \times \Oc(n)$-matrix at most $m$ times and \autoref{eq:kernel} adds $T_{\ker}(n)$ to the runtime.

The sets $\{G'_\alpha\}$ and $\{R'_\alpha\}$ contain exactly $\dim_\K Y_\alpha$ and $\dim_\K \Omega^{1}Y_\alpha$ elements and span the first two parts of a sharp restriction system. The linear system in \autoref{line:restricted2} contains at most $m\left( \thick{Y} + \thick{ \Omega^{1}(Y)} \right)$ variables and $mn$ equations. By \autoref{thm:main} the solution space to this system descends to a basis of $\Hom(X,Y)$.
\end{proof}

\begin{corollary}\label{cor:hom_size}
    \[\dim_K \Hom(X,Y) \leq b_0(X) \thick{Y}.\]
\end{corollary}

\begin{proof}[Proof of \autoref{thm:alg_a}]
Follows from \autoref{prop:A_runtime} and that kernels can be computed in $\Oc(n^3)$ time for $2$ parameters.
\end{proof}

\subparagraph{Modules with low $\thick{\Omega^1(X)}$.}
\begin{example}
Consider the following modules $X$ (red) and $Y$ (blue). The minimal presentation of $X$ has a relation of degree $(4,4)$. The free module induced by the relations of $Y$ has dimension $6$ at $(4,4)$. In the system $QM = NP$, the matrix $P$ is a single column with $6$ variable entries. However, $\dim_\K\Omega^1(Y)_{(4,4)}=2$.

\begin{figure}[ht]
\centering
\include{include/fig_omega1}
\caption{From left to right: \quad  $X$, $Y$ \quad $X_1$, $Y_1$ \quad $\Omega^1(X)$, $\Omega^1(Y)$}
\label{fig:omega1}
\end{figure}
\end{example}

\subparagraph{Restricting only $Q$}
Since the computation of the kernel in \autoref{eq:kernel} may be costly in practice -- especially for more than $2$ parameters -- and $\thick{ \Omega^1(Y)}$ may be large even for low-thickness modules (see e.g. \autoref{fig:omega1}) we propose another algorithm that mixes the two approaches.
It is this algorithm, which has already been tested \cite[3.2 "Fast Hom-Computation" and 8, Table 1]{djk}. Since for every $Q$ there may now be more than one $P$ that solves \autoref{line:semirestricted} we need to reduce the result $(Q_j)$, but the costly reduction step in \autoref{homotopy_reduction} is unnecessary thanks to \autoref{thm:main}.

\begin{algorithm}[ht]
\renewcommand{\thealgocf}{A-$1/2$}
\caption{Mixed Computation of Homomorphisms from Presentations.}
\label{alg:mixed}
\DontPrintSemicolon
\KwIn{Presentation matrices $M \in \K^{G \times R}$, $N \in \K^{G' \times R'}$}
\KwOut{Graded matrices $\{Q_i\}_{i \in I} \in \K^{G' \times G}$ which descend to a basis of $\Hom(X, \, Y)$}

\For{$\alpha \in \deg(G)$}{
    $d_\alpha, \, G'_\alpha   \gets$ \autoref{alg:cokernel}$(N, \alpha)$\;
    \For{$g \in G$ with $\deg(g) = \alpha$}{
    $Q_{g', g} \gets \begin{cases}
        \text{a variable } q_{g',g} &\text{ if } g' \in G'_\alpha \\
        0 &\text{ otherwise}
    \end{cases}$
    }
} 
Calculate a basis $(Q_j, \, P_j)_{j \in J}$ of the solution of $Q M = N P$ \label{line:semirestricted}\;
Treat $(Q_j)_{j\in J}$ as a matrix and reduce it\;
\KwRet{$\left( Q_j\ \ | Q_j \neq 0 \right)_{j \in J}$}
\end{algorithm}

}

\section{Computing Homomorphisms using Exactness}

The second algorithm follows the approach of \cite{ghs01}.
Since $\Hom(-,Y)$ is left exact, it induces an exact sequence of persistence modules

\begin{align}\label{eq:exact} 0 \lra \Hom(X, \, Y) \oto{d_0^*} \Hom(F_0, \, Y) \oto{d_1^*} \Hom(F_1, \, Y).\end{align}

The vector space $\Hom(X,Y)$ can then be computed as the kernel of $d_1^*$. 
In \cite{ghs01}, the authors actually use a dual strategy, which we quickly review. 

\begin{definition}\label{def:matlis}
Let $I(0)$ be the injective module co-generated at $0$ (\autoref{fig:free}). \\
The \emph{Matlis duality} functor is the equivalence $\D \coloneqq \Hom_A\left(-, \, I(0) \right) \colon \grA^{\text{op}} \to \grA$, where we are using the $\grA$-valued $\Hom$-functor (\autoref{def:enrichment}). Denoting vector space duals by $(-)^*$, the Matlis dual can also be defined by
\[(\D X)_{\alpha} \coloneqq X_{-\alpha}^* \quad (\D X)_{\alpha \to \beta } \coloneqq X_{-\beta \to - \alpha}^* \quad \forall \alpha \leq \beta \in \R^d.\]
\end{definition}

\begin{remark}\label{rem:hom_duality}
Since $Y$ is finitely generated, there is an isomorphism of persistence modules 
\[
\D (X \otimes \D Y) = \Hom(X \otimes \Hom(Y, I(0)), I(0)) \iso \Hom(X, \Hom(\Hom(Y, I(0)), I(0))) \iso \Hom(X,Y).
\]
\end{remark}

 Therefore, one can just as well apply the functor $- \otimes \D Y$ to the presentation $M$ of $X$. Then by right-exactness of the tensor product, the \emph{module} $\D \Hom(X,Y)$ is isomorphic to $\coker \left( d_1 \otimes \D Y \right)$. The degree-$0$ part of this sequence is then the vector space dual of the sequence \autoref{eq:exact}.

\subparagraph{Computing $\ker d_1^*$.}
We need to describe $d_1^*$ and $d_0^*$ in the given bases $G, R$: The map $d_1$ becomes a graded matrix $M \colon A[R] \to A[G]$ and $G, R$ also induce isomorphisms
\[ \varphi \colon \Hom(A[G], \, Y )_0 \iso \bigoplus_{g \in G} Y_{\deg(g)} \ \text{ and } \  \Hom(A[R], \, Y )_0 \iso \bigoplus_{r \in R} Y_{\deg(r)}.\]

The vector space $\Hom(X, \, Y)$ is then isomorphic to the kernel of

\begin{align}\label{eq:exact_matrix} M^t \otimes Y \colon \ \bigoplus_{g \in G} Y_{\deg(g)} \oto{ \left(M^*_{r,g}\right)_{r \in R, g \in G}} \bigoplus_{r \in R} Y_{\deg(r)}.\end{align}

Afterwards, for every $(x_g)_{g \in G}$ in the kernel of $M^t \otimes Y$, we only need to find coordinates of each $x_g$ in the generators $G'$ of $Y$.

\begin{proposition}\label{prop:M_star}
$ (M^t \otimes Y)_{r,g} = M_{g,r} \cdot Y_{\deg(g) \to \deg(r)}
$ for all $(g,r) $ $\in \{G \leq R\}$,
where $Y_{\deg(g) \to \deg(r)}$ is the structure map of $Y$.
\end{proposition}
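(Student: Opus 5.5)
The plan is to unwind the identification $\varphi$ and compute $d_1^* = \Hom(d_1, Y)_0$ one basis element at a time. A degree-$0$ homomorphism $\psi \colon A[G] \to Y$ is determined by the tuple $(\psi(g))_{g \in G}$ with $\psi(g) \in Y_{\deg(g)}$; this is exactly the isomorphism $\varphi$, and the analogous statement holds for $A[R]$. So I compute $d_1^*(\psi) = \psi \circ d_1$ on each basis element $r \in R$ and read off its component in $Y_{\deg(r)}$ as a function of the components $\psi(g)$.

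First I expand $d_1(r)$ in the basis $G$ using the graded matrix $M$. The column of $M$ indexed by $r$ records
\[ d_1(r) = \sum_{g \in G,\ \deg(g) \leq \deg(r)} M_{g,r}\, x^{\deg(r)-\deg(g)} \cdot g, \]
where the restriction to $\deg(g) \leq \deg(r)$ is precisely the vanishing condition of \autoref{def:graded_matrix}; the monomial $x^{\deg(r)-\deg(g)}$ appears because $r$ is homogeneous of degree $\deg(r)$. Applying the $A$-linear map $\psi$ then gives
\[ (\psi \circ d_1)(r) = \sum_{\deg(g) \leq \deg(r)} M_{g,r}\, x^{\deg(r)-\deg(g)} \cdot \psi(g). \]

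Next I identify multiplication by the monomial $x^{\deg(r)-\deg(g)}$ on $Y_{\deg(g)}$ with the structure map $Y_{\deg(g) \to \deg(r)}$. This is the standard dictionary between $\Z^d$-graded $A$-modules and functors $\Z^d \to \vect_\K$. The $r$-component of $d_1^*(\psi)$ is therefore
\[ \sum_{g} M_{g,r}\, Y_{\deg(g)\to\deg(r)}(\psi(g)). \]
Reading this as a block matrix from $\bigoplus_g Y_{\deg(g)}$ to $\bigoplus_r Y_{\deg(r)}$ gives block $(r,g)$ equal to $M_{g,r}\cdot Y_{\deg(g)\to\deg(r)}$ when $g \leq r$, and $0$ otherwise. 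Note the transposition of indices, which justifies the notation $M^t \otimes Y$.

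There is no real obstacle here. The only point needing care is the convention check: the entry $M_{g,r}$ is the scalar coefficient, and the monomial is determined by the degrees. This holds because the paper's graded matrices store only scalars, with degrees carried by the row and column labels. I would state this explicitly.
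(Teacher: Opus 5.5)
Your proposal is correct and is essentially the paper's computation. The paper evaluates $\varphi^{-1}(\bar y)\circ M$ at $r$ for a vector $\bar y$ supported at a single generator $g$, whereas you treat a general $\psi$ and read off the blocks by linearity; both rest on expanding $M(r)$ with the monomials $x^{\deg(r)-\deg(g)}$ and identifying their action with the structure map $Y_{\deg(g)\to\deg(r)}$.
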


\begin{proof}

Let $g \in G, r \in R$, $y \in Y_{\deg(g)}$, and $\bar y = (0, \dots, y,  \dots, 0) \in \bigoplus_{h \in G} Y_{\deg(h)}$. The vector $\bar y$ corresponds to the map $\varphi^{-1}(\bar y) \colon A[G] \to Y$ that sends $g \in A[G]$ to $y$ and the other elements of $G$ to $0$. 
We describe the map $M^* (\varphi^{-1}(\bar y)) \colon A[R] \to Y$ at $r \in A[R]$:

\begin{align*} \varphi^{-1}(\bar y) \circ M (r) =  \varphi^{-1}(\bar y)\left( \sum_{h \in G} M_{h, r} x^{\deg(r) - \deg(h)} \cdot h \right) \\
= \sum_{h \in G} M_{h, r} x^{\deg(r) - \deg(h)} \cdot  \varphi^{-1}(\bar y)(h)
=  M_{g, r} x^{\deg(r) - \deg(g)} \cdot y  \\
= M_{g, r} \cdot Y_{\deg(g) \to \deg(r)} \left( y \right) .\end{align*}

If $\deg(r) \ngeq \deg(g)$ we set $(M^t \otimes Y)_{r,g}=0$.
\end{proof}

\subparagraph{Computing the structure maps $Y_\alpha \to Y_\beta$.}
Since, for any $\alpha \in \R^d$, \autoref{alg:cokernel} supplies us with the map $A[G']_\alpha \onto Y_\alpha$ we can use it to compute the structure maps of $Y$. For details, we refer to
Paragraph 6.3 in \cite{djk_arXiv}, where this problem is the special case of considering the degree-$0$ part of the map $(x^{\beta - \alpha} \cdot -) \colon Y[\alpha] \to Y[\beta]$ for any $\alpha \leq \beta$.

\autoref{alg:hom_exact} then assembles these to compute the matrix from \autoref{prop:exact_correct}. The full pseudocode can be found in the appendix \autoref{sec:algs}.

\begin{proposition}\label{prop:exact_correct}
 \autoref{alg:hom_exact} is correct and has a run time in $\Oc(m^3 \thickb{Y}^3+ mn^3)$.
\end{proposition}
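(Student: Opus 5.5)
Correctness and running time are handled separately. Correctness follows from left exactness of $\Hom(-,Y)$ applied to the presentation $A[R]\oto{M}A[G]\to X\to 0$, which gives the exact sequence \autoref{eq:exact}. After choosing the bases $G,R$, the map $d_1^*$ in degree $0$ is the block matrix $M^t\otimes Y$, whose blocks are computed in \autoref{prop:M_star}. Hence $\Hom(X,Y)\cong\ker(M^t\otimes Y)$ as vector spaces.

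To build this matrix, the algorithm fixes for every $\alpha\in\deg(G)\cup\deg(R)$ the basis of $Y_\alpha$ returned by \autoref{alg:cokernel}. This basis is given by the subset $G'_\alpha$ together with the cokernel map $d_\alpha\colon \K[G'_{\leq\alpha}]\onto Y_\alpha$. For each pair $(g,r)\in\{G\leq R\}$ the algorithm writes the block $M_{g,r}\cdot Y_{\deg(g)\to\deg(r)}$ in these coordinates. Each column of a basis of the kernel is a tuple $(x_g)_{g\in G}$ with $x_g\in Y_{\deg(g)}$ given in the basis $G'_{\deg(g)}$. Such a tuple defines a map $A[G]\to Y$ killing $\Img M$, and so a homomorphism $X\to Y$. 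Its lift to $Q\colon A[G]\to A[G']$ is obtained by placing the coordinates of $x_g$ in the rows $G'_{\deg(g)}$ of column $g$. This is exactly the unique lift of \autoref{lem:lifting} into the sharp restriction system of \autoref{obs:omega}. Because $d_0^*$ is injective, linearly independent kernel vectors give linearly independent homomorphisms, so the output is a basis and no homotopy reduction is needed, unlike in \autoref{alg:direct}.

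For the running time, the algorithm first calls \autoref{alg:cokernel} once per distinct degree in $\deg(G)\cup\deg(R)$. That is $\Oc(m)$ reductions of matrices of size $\Oc(n)\times\Oc(n)$, costing $\Oc(mn^3)$ in total. Next it computes the structure maps $Y_{\deg(g)\to\deg(r)}$, each as a $\thickb{Y}\times\thickb{Y}$ matrix. Following Paragraph 6.3 of \cite{djk_arXiv}, one maps the basis vectors in $G'_{\deg(g)}$ to degree $\deg(r)$ inside $A[G']$ and applies $d_{\deg(r)}$. I expect the main obstacle to be keeping this step within budget: there are up to $m^2$ pairs $(g,r)$, so each structure map must be obtained from the precomputed $d_{\deg(r)}$ by selecting $\thickb{Y}$ columns rather than by a fresh reduction. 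The plan is therefore to precompute $d_\alpha$ once per degree and, for each pair, only read off the columns indexed by $G'_{\deg(g)}$. Since $G'_{\deg(g)}\subset G'_{\leq\deg(r)}$, this is a restriction of $d_{\deg(r)}$ and costs $\Oc(\thickb{Y}^2)$ per pair, which is dominated by the kernel step below.

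The assembled matrix $M^t\otimes Y$ has at most $\sum_{r}\dim Y_{\deg(r)}\leq m\thickb{Y}$ rows and at most $m\thickb{Y}$ columns. Gaussian elimination on it takes $\Oc((m\thickb{Y})^3)=\Oc(m^3\thickb{Y}^3)$. Writing the output matrices $Q_j$ costs at most $\dim\Hom(X,Y)\cdot m\thickb{Y}$, which by \autoref{cor:hom_size} is $\Oc(m^2\thickb{Y}^2)$ and is again dominated. Summing the steps gives $\Oc(m^3\thickb{Y}^3+mn^3)$.
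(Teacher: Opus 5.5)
Your proposal is correct and follows the paper's proof essentially step for step. Correctness comes from \autoref{prop:M_star}; you spell out the left-exactness and lifting argument more explicitly than the paper's ``follows directly''. The cost is $\Oc(mn^3)$ for the at most $m$ calls to \autoref{alg:cokernel}, $\Oc(m^2\thickb{Y}^2)$ for reading off each structure map as the $G'_{\deg(g)}$-columns of $d_{\deg(r)}$, $\Oc(m^3\thickb{Y}^3)$ for the kernel, and $\Oc(m^2\thickb{Y}^2)$ for writing out the $Q_j$ via \autoref{cor:hom_size}.
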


\begin{proof}
Correctness follows directly from \autoref{prop:M_star}. 
The matrix $M^t \otimes N$ is at most of size $m \thickb{Y} \times m \thickb{Y}$ and so computing the kernel will need less than $m^3 \thickb{Y}^3$ operations.

For the computation of $Y_{\alpha \to \beta}$, we are reducing an $\Oc(n) \times \Oc(n)$ matrix with \autoref{alg:cokernel} at most $m$ times, adding the $mn^3$-term. This implicitly produces the subsets $G'_\alpha \subset G$ which form a basis of $Y_{\alpha}$. For each pair $(\alpha \leq \beta)$, we can use this to compute the matrix $Y_{\alpha \to \beta}$ with respect to these bases. This matrix is of size at most $\thickb{Y}^2$ and consists exactly of the columns indexed by $G'_{\alpha}$ of $\coker N_{\leq \beta}$, so we only need to copy the entries and this assembly takes $\Oc(m^2 \thickb{Y}^2)$ time.
 $|J| = \dim_\K \Hom(X,Y) \leq m \thick{Y}$ by \autoref{cor:hom_size} and the matrices $Q_j$ have size $\Oc(n) \times \Oc(m)$, but at most $m \thick{Y}$ entries, so the computation of the inverses under $\varphi$ takes at most $m^2\thickb{Y}^2$ time, too.
\end{proof}

\begin{remark}\label{rem:fast_mm}
When using fast matrix multiplication to solve the systems in \autoref{alg:hom_exact} its runtime is $\Oc( m^\omega \thickb{Y}^\omega + mn^\omega)$.
\end{remark}

In fact $\thick{Y}$ isn't the sharpest bound here in \autoref{alg:hom_exact} or in \autoref{alg:restricted}, because the vector spaces $Y_{\deg(g)}$ and $Y_{\deg(r)}$ only sit at the locations of generators and relations.

\begin{corollary}\label{cor:thickness_area}
In \autoref{prop:exact_correct} we can replace $\thick{Y}$ by $\max_{\alpha \in b_0(X) \cup b_1(X)} \dim_\K Y_\alpha$ and in \autoref{prop:A_runtime} we can replace $\thick{Y}$ by $\max_{\alpha \in b_0(X)} \dim_\K X_\alpha$ and $\thick{\Omega^{-1}Y}$ by $\max_{\alpha \in b_1(X)} \dim_\K \left(\Omega^{-1}Y\right)_\alpha$.
\end{corollary}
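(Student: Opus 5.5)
The plan is to re-run the counting argument from the proof of \autoref{prop:A_runtime} and never replace a local pointwise dimension by its global maximum. Thickness enters the bound only through the number of free variables in the linear system in \autoref{line:restricted2} of \autoref{alg:restricted}. Everything else is independent of $\thick{Y}$ and stays as it is: the $m$ calls to \autoref{alg:cokernel} on $\Oc(n)\times\Oc(n)$ matrices, the kernel computation $T_{\ker}(d,n)$, and the $mn$ equations.

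First, the variable count, split into two kinds.

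\emph{Variables in $Q$.} For each generator $g\in G$ there is one variable $q_{g',g}$ for each $g'$ in the set $G'_{\deg(g)}$ returned by \autoref{alg:cokernel}. So the column of $Q$ indexed by $g$ has a number of variables equal to the size of the sharp restriction system at $\deg(g)$ from the proof of \autoref{thm:main}. The relevant maximum is therefore over the degrees $\alpha\in b_0(X)$ only.

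\emph{Variables in $P$.} For each relation $r\in R$ the number of variables is $|R'_{\deg(r)}|$. This is the dimension at $\deg(r)$ of the module onto which $A[R']$ surjects via the restriction system, so the maximum is over $\alpha\in b_1(X)$ only.

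Second, I substitute these local maxima for the global thickness terms. The variable count becomes $m$ times the sum of the two local maxima. The Gaussian elimination in \autoref{line:restricted2} then costs $\Oc(m^3n)$ times the square of that sum. This reproduces the first term of \autoref{prop:A_runtime} with the local quantities in place of $\thick{Y}$ and $\thick{\Omega^1Y}$.

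The last step, which I expect to be the main obstacle, is identifying these local quantities with the expressions exactly as the corollary words them: $\max_{\alpha\in b_0(X)}\dim_\K X_\alpha$ and $\max_{\alpha\in b_1(X)}\dim_\K(\Omega^{-1}Y)_\alpha$. What the counting directly gives are the sizes of the sharp restriction systems, namely $\dim_\K Y_{\deg(g)}$ and $\dim_\K(\Omega^1Y)_{\deg(r)}$.
\begin{itemize}
\item For the $P$-term, I would first try to match $(\Omega^1Y)_{\deg(r)}$ with the written $(\Omega^{-1}Y)_\alpha$ by unwinding the notation and the indexing convention for syzygies.
\item For the $Q$-term, I would try to bound $\dim_\K Y_{\deg(g)}$ by $\dim_\K X_{\deg(g)}$. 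A natural attempt is to restrict the unknowns so that each generator is mapped only into the image of $X_{\deg(g)}$ under a homomorphism, rather than into all of $Y_{\deg(g)}$.
\end{itemize}
Before relying on either identification, I would test both on \autoref{ex:running1}. If the $Q$-bound fails there, the provable statement is the one with the target module's pointwise dimension $\dim_\K Y_\alpha$. In that case I would report that the argument establishes this version rather than the literal wording.
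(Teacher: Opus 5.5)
Your localized count for \autoref{alg:restricted} is correct and matches the paper's one-line justification (the remark before the corollary that the relevant spaces only sit at the degrees of generators and relations of $X$). The $Q$-unknowns number $\sum_{g\in G}\dim_\K Y_{\deg(g)}$, the $P$-unknowns number $\sum_{r\in R}\dim_\K(\Omega^1Y)_{\deg(r)}$, and elimination against $mn$ equations gives the localized first term of \autoref{prop:A_runtime}.

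\textbf{Missing half of the statement.} The first claim of the corollary, about \autoref{prop:exact_correct} and \autoref{alg:hom_exact}, is never addressed. There $\thick{Y}$ enters in three places, and each must be localized:
\begin{itemize}
\item By \autoref{prop:M_star}, the matrix $M^t\otimes Y$ has size $\sum_{r\in R}\dim_\K Y_{\deg(r)}\times\sum_{g\in G}\dim_\K Y_{\deg(g)}$. This is at most $mt\times mt$ with $t=\max_{\alpha\in b_0(X)\cup b_1(X)}\dim_\K Y_\alpha$. This is why the relation degrees of $X$ enter the bound here.
\item Each block $Y_{\deg(g)\to\deg(r)}$ to be assembled has size at most $t^2$.
\item When converting kernel vectors back to matrices $Q_j$, you need $|J|\le\sum_{g}\dim_\K Y_{\deg(g)}\le mt$. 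This is the local form of \autoref{cor:hom_size}, and it is immediate because the kernel lies in $\bigoplus_{g}Y_{\deg(g)}$.
\end{itemize}

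\textbf{The flagged obstacle is a misprint, and your plan for it would fail.} The $X_\alpha$ and $\Omega^{-1}$ in the statement are typos for $Y_\alpha$ and $\Omega^1$. \autoref{prop:A_runtime} contains $\thick{\Omega^1(Y)}$. By contrast, $\Omega^{-1}$ is a cosyzygy taken from an injective resolution and appears only for the dual \autoref{alg:restricted_dual}. It is a different module, and no re-indexing identifies it with $\Omega^1$.

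Your planned reduction of the $Q$-unknowns to $\dim_\K X_{\deg(g)}$ per generator cannot work. Mapping each generator ``only into the image of $X_{\deg(g)}$ under a homomorphism'' is circular, since the homomorphism is the unknown. More decisively, take $X=F(0)$ and $Y=F(0)^{\oplus k}$. Then $\dim_\K\Hom(X,Y)=k$ while $\dim_\K X_0=1$, and there are no relations, so any valid system needs at least $k$ unknowns in $Q$. Your proposed sanity check would not catch this, because in \autoref{ex:running1} both $X_{(2,2)}$ and $Y_{(2,2)}$ are one-dimensional.

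State the result from the start with $\max_{\alpha\in b_0(X)}\dim_\K Y_\alpha$ and $\max_{\alpha\in b_1(X)}\dim_\K(\Omega^1Y)_\alpha$, and add the argument for \autoref{alg:hom_exact}.
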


\subparagraph{Computing the Module of Homomorphisms.}

\begin{definition}\label{def:enrichment}
The category of persistence modules, $\grA$, is enriched over itself: $\Hom(X,Y)$ becomes the degree-$0$ part of a graded $A$-module by setting $\Hom(X, Y)_\alpha \coloneqq \Hom(X, Y[\alpha])$.
\end{definition}

With only a little more work, we can compute a presentation of the \emph{module} $\Hom(X,Y)$ with the ideas from this section. The enriched version of the exact sequence \autoref{eq:exact} yields

\begin{align}\label{eq:module} 0 \lra \Hom(X, \, Y) \to \bigoplus_{g \in G} Y[\deg(g)] \oto{ \left(M^*_{r,g}\right)_{r \in R, g \in G}} \bigoplus_{r \in R} Y[\deg(r)].\end{align}
where now $\left(M^*_{r,g}\right)_{r \in R, g \in G}$ is a map of persistence modules. After extending a presentation of $Y$ to presentations $N', N''$ of the direct products and lifting $M^*_{r,g}$ to a graded matrix $Q$, a cover for $\Hom(X, \, Y)$ is given by $\ker Q \oto{i} \bigoplus_g A[G'-\deg(g)]$ and a presentation $P$ by $\ker [ -i \ N']$. We remind ourselves again that this can be done efficiently (so far) only for $2$ adn $3$ parameters.

Alternatively, using the dual short exact sequence, we can compute a presentation as a cokernel using duality of projective and injective resolutions \cite{Miller2001, BLL23} and Proposition 4.14. in \cite{djk_arXiv}. See also the next chapter.

\begin{question}\label{qu:enriched}
Can the approach of \autoref{alg:restricted} be extended to compute the module $\Hom(X,Y)$?
\end{question}

\ignore{
\section{Exactness Issac version}

The second algorithm follows the approach of \cite{ghs01}.
Since $\Hom(-,Y)$ is left exact, it induces an exact sequence of vector spaces
\begin{align}\label{eq:exact} 0 \lra \Hom(X, \, Y) \oto{d_0^*} \Hom(X_0, \, Y) \oto{d_1^*} \Hom(X_1, \, Y).\end{align}
 
In \cite{ghs01}, the authors actually use a dual strategy. 

\begin{definition}\label{def:matlis}
Let $I(0)$ be the injective module co-generated at $0$ (\autoref{def:}). \\
The \emph{Matlis duality} functor is \\ $\D \coloneqq \Hom_A\left(-, \, I(0) \right) \colon \grA^{\text{op}} \to \grA$ . Note that we are using the $\grA$-valued $\Hom$-functor here. Denoting vector space duals by $(-)^*$, the Matlis dual can also be defined by
\[(\D X)_{\alpha} \coloneqq X_{-\alpha}^* \quad (\D X)_{\alpha \to \beta } \coloneqq X_{-\beta \to - \alpha}^* \quad \forall \alpha \leq \beta \in \R^d.\]
\end{definition}

\begin{remark}\label{rem:hom_duality}
If $Y$ is finitely generated, there is an isomorphism 
\begin{align*}
\D(X \otimes \D Y) = \Hom(X \otimes \Hom(Y, I(0)), I(0)) \\ \iso \Hom(X, \Hom(\Hom(Y, I(0)), I(0))) \iso \Hom(X,Y).
\end{align*}
\end{remark}

  By right-exactness the \emph{module} $\D\Hom(X,Y)$ is then isomorphic to $\coker \left( d_1 \otimes \D Y \right)$. The degree-$0$ part of the resulting sequence is the vector space dual of \autoref{eq:exact}.

We need to describe $d_1^*$ and $d_0^*$ in the given bases $G, R$: The map $d_1$ becomes the graded matrix $M \colon A[R] \to A[G]$ and $G, R$ also induce 
\[ \varphi \colon \Hom(A[G], \, Y ) \iso \bigoplus_{g \in G} Y_{\deg(g)} \ \text{ and } \  \Hom(A[R], \, Y ) \iso \bigoplus_{r \in R} Y_{\deg(r)}.\]

 We can therefore compute $\Hom(X, \, Y)$ as the kernel of

\begin{align}\label{eq:exact_matrix} M^t \otimes Y \colon \ \bigoplus_{g \in G} Y_{\deg(g)} \oto{ \left(M^*_{r,g}\right)_{r \in R, g \in G}} \bigoplus_{r \in R} Y_{\deg(r)}.\end{align}

Afterwards, for every $(x_g)_{g \in G}$ in the kernel of $M^t \otimes Y$, we only need to find coordinates of each $x_g$ in the generators $G'$ of $Y$.

\begin{proposition}\label{prop:M_star}
$ (M^t \otimes Y)_{r,g} = M_{g,r} \cdot Y_{\deg(g) \to \deg(r)}
$ for all $(g,r) $ $\in \{G \leq R\}$,
where $Y_{\deg(g) \to \deg(r)}$ is the structure map of $Y$.
\end{proposition}

\begin{proof}

Let $g \in G, r \in R$, $y \in Y_{\deg(g)}$, and $\bar y = (0, \dots, y,  \dots, 0) \in \bigoplus_{h \in G} Y_{\deg(h)}$. The vector $\bar y$ corresponds to the map $\varphi^{-1}(\bar y) \colon A[G] \to Y$ that sends $g \in A[G]$ to $y$ and the other elements of $G$ to $0$. 
We describe the map $M^* (\varphi^{-1}(\bar y)) \colon A[R] \to Y$ at $r \in A[R]$:

\begin{align*} \varphi^{-1}(\bar y) \circ M (r) =  \varphi^{-1}(\bar y)\left( \sum_{h \in G} M_{h, r} x^{\deg(r) - \deg(h)} \cdot h \right) \\
= \sum_{h \in G} M_{h, r} x^{\deg(r) - \deg(h)} \cdot  \varphi^{-1}(\bar y)(h)
=  M_{g, r} x^{\deg(r) - \deg(g)} \cdot y  \\
= M_{g, r} \cdot Y_{\deg(g) \to \deg(r)} \left( y \right) .\end{align*}

If $\deg(r) \ngeq \deg(g)$ we set $(M^t \otimes Y)_{r,g}=0$.
\end{proof}

\subparagraph{Computing the structure maps $Y_\alpha \to Y_\beta$.}
Since, for any $\alpha \in b_0(X)$, \autoref{alg:cokernel} can supply us with the map $A[G']_\alpha \onto Y_\alpha$ we can use it to compute the structure maps of $Y$. For details, we refer to
Paragraph 6.3 in \cite{djk_arXiv}, where this problem is the special case of considering the degree-$0$ part of the map $(x^{\beta - \alpha} \cdot -) \colon Y[\alpha] \to Y[\beta]$ for any $\alpha \leq \beta$.

We have put a detailed description of \autoref{alg:hom_exact} in the Appendix. It computes the matrix $M^t \otimes Y$, a basis for its kernel, and then uses the inverse of $\varphi$ to construct the lifts $Q_j$.

\begin{proposition}\label{prop:exact_correct}
 \autoref{alg:hom_exact} computes a basis for $\Hom(X,Y)$ and has a run time in $\Oc(m^3 \thickb{Y}^3+ mn^3)$.
\end{proposition}

\begin{proof}
Correctness follows directly from \autoref{prop:M_star}. 
The matrix $M^t \otimes N$ is at most of size $m \thickb{Y} \times m \thickb{Y}$ and so computing the kernel will need less than $m^3 \thickb{Y}^3$ operations.

For the computation of $Y_{\alpha \to \beta}$, we are reducing an $\Oc(n) \times \Oc(n)$ matrix with \autoref{alg:cokernel} at most $m$ times. For each pair $(\alpha \leq \beta)$, we then need to compute the matrix $Y_{\alpha \to \beta}$ of size at most $\thick{Y}^2$. It is a submatrix of $\coker N_{\leq \alpha}$, so this assembly takes $\Oc(m^2 \thick{Y}^2)$ time.
 $|J| = \dim_\K \Hom(X,Y) \leq m \thick{Y}$ by \autoref{cor:hom_size} and the matrices $Q_j$ have size $\Oc(n) \times \Oc(m)$, but at most $m \thick{Y}$ entries, so the computation of the inverses under $\varphi$ takes at most $m^2\thick{Y}^2$ time, too.
\end{proof}

\subparagraph{Computing the module of homomorphisms.}

\begin{definition}\label{def:enrichment}
The category of persistence modules, $\grA$, is enriched over itself: $\Hom(X,Y)$ becomes the degree-$0$ part of a graded $A$-module by setting $\Hom(X, Y)_\alpha \coloneqq \Hom(X, Y[\alpha])$.
\end{definition}

With only a little more work, we can compute a presentation of the \emph{module} $\Hom(X,Y)$ with the ideas from this section. The enriched version of the exact sequence \autoref{eq:exact} yields

\begin{align}\label{eq:module} 0 \lra \Hom(X, \, Y) \to \bigoplus_{g \in G} Y[\deg(g)] \oto{ \left(M^*_{r,g}\right)_{r \in R, g \in G}} \bigoplus_{r \in R} Y[\deg(r)].\end{align}
where now $\left(M^*_{r,g}\right)_{r \in R, g \in G}$ is a map of persistence modules. After extending a presentation of $Y$ to presentations $N', N''$ of the direct products and lifting $M^*_{r,g}$ to a graded matrix $Q$, a cover for $\Hom(X, \, Y)$ is given by $\ker Q \oto{i} \bigoplus_g A[G'-\deg(g)]$ and a presentation $P$ by $\ker [ -i \ N']$. We remind ourselves again that this can be done efficiently (so far) only for $2$ and $3$ parameters.

Alternatively, using the dual short exact sequence, we can compute a presentation as a cokernel using duality of free and injective resolutions \cite{Miller2001, BLL23} and Proposition 4.14. in \cite{djk_arXiv}.

\textbf{Open Question:}
Can the approach of \autoref{alg:direct} (or even \autoref{alg:restricted}) be extended to compute the module $\Hom(X,Y)$?
}

\section{Duality and Homomorphisms}

The preceding algorithms exploited the low-dimensionality of the target module $Y$. Assume now that, instead, 
the domain $X$ is low-dimensional compared to $Y$.

Since Matlis duality is a contravariant equivalence, we have
$\Hom(X, \, Y) \simeq \Hom(\D Y, \, \D X)$
and so we could just compute the latter with our algorithms to exploit the low-dimensionality of $X$. Equivalently, this amounts to reformulating \autoref{alg:restricted} and \autoref{alg:hom_exact} for \emph{injective} resolutions, by the following standard result (E.g. \cite[Proposition 3.5]{miller00})

\begin{proposition}\label{prop:proj_inj}
If $P_\bullet \to X$ is a projective resolution, then $\D X \to \D P_\bullet$ is an injective resolution.
\end{proposition}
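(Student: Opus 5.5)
Matlis duality $\D = \Hom_A(-, I(0))$ is exact, and it sends free modules to injective ones. The proof of the statement rests on these two facts.

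\textbf{Step 1: each $\D P_i$ is injective.} Since $P_i$ is projective, it is free by the graded Kaplansky remark. Hence
\[
P_i \cong \bigoplus_{j} A[-\alpha_j].
\]
In the finitely generated case $\D$ commutes with finite direct sums, so it suffices to compute $\D A[-\alpha]$. By the degreewise formula in \autoref{def:matlis},
\[
(\D A[-\alpha])_\beta = (A_{-\beta-\alpha})^*,
\]
which is one-dimensional exactly when $\beta \leq -\alpha$. The structure maps are duals of multiplication by monomials, which are isomorphisms. This identifies $\D A[-\alpha] \cong I(-\alpha)$, the co-free module cogenerated at $-\alpha$.

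Injectivity of $I(-\alpha)$ follows from the adjunction
\[
\Hom(X, I(\gamma)) \cong (X_\gamma)^*.
\]
The right-hand side is exact in $X$, because taking the degree-$\gamma$ part is exact and so is vector-space duality. So $\Hom(-, I(\gamma))$ is exact, i.e.\ $I(\gamma)$ is injective. A finite direct sum of injectives is injective.

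For infinitely generated $P_i$, $\D$ turns the direct sum into a product of the $I(-\alpha_j)$, and a product of injectives is injective. Either way $\D P_i$ is injective.

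\textbf{Step 2: $\D$ is exact, so the resolution dualises.} By the degreewise formula, $\D$ is computed pointwise by $\K$-linear duality, reindexed by $\alpha \mapsto -\alpha$. Exactness of a sequence of persistence modules is checked degreewise, and $(-)^*$ is exact on vector spaces. Applying $\D$ to the exact sequence
\[
\cdots \to P_1 \to P_0 \to X \to 0
\]
therefore gives the exact sequence
\[
0 \to \D X \to \D P_0 \to \D P_1 \to \cdots,
\]
with the arrows reversed because $\D$ is contravariant. By Step 1 every term after $\D X$ is injective, so this is an injective resolution of $\D X$ in the sense of \autoref{def:injective}.

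\textbf{Main obstacle.} The only step that needs care is identifying $\D$ of a free module with a co-free module and verifying that co-free modules are injective. I would handle this cleanly through the adjunction $\Hom(X, I(\gamma)) \cong X_\gamma^*$, which gives injectivity directly. The remaining bookkeeping is the sign convention $\deg \mapsto -\deg$, which only relabels where each summand is cogenerated.
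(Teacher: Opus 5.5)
Your proof is correct. The paper gives no argument of its own here: it treats the proposition as standard and simply cites Miller's Proposition 3.5, so your proposal supplies a self-contained proof where the paper defers to the literature.

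Your route is explicit and computational:
\begin{itemize}
\item reduce to free modules via the graded Kaplansky remark;
\item compute $\D A[-\alpha] \cong I(-\alpha)$ degreewise;
\item obtain injectivity of co-free modules from the natural isomorphism $\Hom(X, I(\gamma)) \cong (X_\gamma)^*$;
\item pass to products to handle infinite rank.
\end{itemize}

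The usual textbook argument is shorter and slightly more general. By tensor--hom adjunction, $\Hom(M, \D P) \cong \Hom(M \otimes_A P, I(0))$ naturally in $M$. This is exact in $M$ because $P$ is flat and $I(0)$ is injective (your case $\gamma = 0$). That version needs neither the Kaplansky reduction, nor the identification of $\D$ on free summands, nor the finite/infinite case split. It also shows directly that $\D$ sends every flat module, not just every projective one, to an injective module.

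What your computation buys in exchange is concreteness. It identifies each $\D P_i$ as a product of co-free modules $I(-\alpha_j)$ cogenerated at the negated Betti degrees. In the finitely generated case, the differentials are given by the transposed graded matrices, which is exactly the bookkeeping the dual algorithms rely on.
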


We use cohomological grading for chain complexes from this point on. That is, a projective resolution of a module will sit in negative degrees and an injective resolution in positive degrees.
We remind the reader that $D$ reverses the homological grading of a chain complex.

\begin{itemize}
\item Dual of \autoref{alg:restricted}: Instead of computing the \emph{lifts} of a homomorphism $X \to Y$ to presentations, we compute the \emph{extensions} to injective co-presentations.
\item Dual of \autoref{alg:hom_exact}: Instead of using left-exactness of $\Hom(-,Y)$, we use right-exactness of $\Hom(X,-)$ and an injective co-presentation of $Y$.
\end{itemize} 

Fortunately, we know how to compute injective resolutions from projective presentations thanks to the work of Miller \cite{miller00, Miller2001} and a recent re-discovery of these ideas in \cite{BLL23}.
We will give a short overview of these results

\subparagraph{Equivalence of Injective and Free Resolutions.}

\begin{theorem}[{\cite[Theorem 5.3]{Miller2001}, \cite[Theorem 10]{BLL23}}]\label{thm:local_duality}
Let $C_\bullet$ be a chain complex of free modules whose homology has bounded support. 
There is an isomorphism in the derived category 
\[ C_\bullet \iso \Sigma^{-d} \D \Hom(C_\bullet, \, A) [-\vec{1}]. \]
\end{theorem}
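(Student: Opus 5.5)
The plan is to reduce \autoref{thm:local_duality} to the case of a single free module of rank one, and then assemble the general statement from that building block. I would proceed in three steps.

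\textbf{Step 1: the rank-one computation.} Take $C_\bullet = A[-\alpha]$ concentrated in one degree. Then $\Hom(A[-\alpha],A)\simeq A[\alpha]$, so $\D\Hom(C,A)[-\vec 1]$ is the injective hull $I(\alpha-\vec 1)$ up to a shift. The comparison between $F(\alpha)$ and $I(\alpha - \vec 1)$ is realised by the Čech (or Koszul) complex on $x_1,\dots,x_d$. I would use the stable Koszul/Čech complex $\check C^\bullet$, with terms $\bigoplus_{|S|=k} A[x_S^{-1}]$. Tensoring $A[-\alpha]$ with $\check C^\bullet$ gives a complex whose only cohomology is $H^d_{\mathfrak m}(A[-\alpha]) \simeq \D(A[\alpha])[-\vec 1]$. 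This is the $\Z^d$-graded local duality computation of Miller.

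\textbf{Step 2: the natural comparison map.} For a general bounded complex $C_\bullet$ of finitely generated free modules, I would form the double complex $C_\bullet\otimes \check C^\bullet$. There are two natural maps:
\[
C_\bullet \longleftarrow \mathrm{Tot}(C_\bullet\otimes\check C^\bullet) \longrightarrow \Sigma^{-d}\D\Hom(C_\bullet,A)[-\vec 1].
\]
The left map comes from the inclusion of $\check C^0=A$; more precisely, there is a map $C_\bullet \to \mathrm{Tot}$, and we use its cone. The right map comes from the canonical isomorphism of Step 1 applied termwise, which is natural in free modules, so it commutes with the differentials.

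\textbf{Step 3: quasi-isomorphisms.} The right arrow is a quasi-isomorphism by Step 1 and a spectral sequence argument on the bounded double complex. The left arrow is the main obstacle. Its cone is built from the terms $C\otimes A[x_S^{-1}]$ with $S\neq\emptyset$. These are localisations, and localisation is exact, so their cohomology is $H(C)[x_S^{-1}]$. Since $H(C)$ has bounded support, every element is killed by a power of each $x_i$, hence $H(C)[x_S^{-1}]=0$. The filtration spectral sequence by $|S|$ then shows that the cone is acyclic.

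Putting this together gives the roof of quasi-isomorphisms, i.e.\ the claimed isomorphism in the derived category. The remaining care is in the bookkeeping of the shift $\Sigma^{-d}$ and the degree twist $[-\vec 1]$, which are fixed by checking the rank-one case.
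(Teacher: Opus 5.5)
The paper does not prove this statement; it imports it from Miller and from Bauer--Lenzen--Lesnick. Your proposal is the standard \v{C}ech-complex proof of local duality, specialised to complexes with $\mathfrak m$-torsion homology, and it is correct in substance.

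What it gives beyond the citation is clarity about where each hypothesis is used:
\begin{itemize}
\item The map $\mathrm{Tot}(C_\bullet\otimes\check C^\bullet)\to\Sigma^{-d}H^d_{\mathfrak m}(C_\bullet)\cong\Sigma^{-d}\D\Hom(C_\bullet,A)[\pm\vec 1]$ is a quasi-isomorphism for every complex of finitely generated free modules.
\item Bounded support of the homology is used only to make the augmentation to $C_\bullet$ a quasi-isomorphism.
\item Your restriction to finitely generated terms is necessary, not a convenience. For $F=\bigoplus_{\N}A$, the module $\D\Hom(F,A)$ is the dual of a graded product and is much larger than $F\otimes H^d_{\mathfrak m}(A)$. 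The statement itself fails, e.g.\ for $C_\bullet=(\bigoplus_{\N}A[-1]\oto{x}\bigoplus_{\N}A)$ with $d=1$. So the paper's ``free modules'' must be read as finitely generated ones.
\item Boundedness of $C_\bullet$ is not needed. $\check C^\bullet$ has only $d+1$ columns, and both of your acyclicity arguments work for double complexes with finitely many columns.
\end{itemize}

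Two points need fixing.

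First, the prose of Step 2 reverses the map. $A=\check C^0\hookrightarrow\check C^\bullet$ is not a chain map, because the \v{C}ech differential out of $\check C^0$ is nonzero. So the inclusion of $\check C^0$ does not give a map $C_\bullet\to\mathrm{Tot}$. The natural map is the augmentation $\check C^\bullet\to\check C^0=A$, which is a chain map because $\check C^{\ge 1}$ is a subcomplex. It induces $\mathrm{Tot}(C_\bullet\otimes\check C^\bullet)\to C_\bullet$ as in your display. This map is degreewise split surjective with kernel $C_\bullet\otimes\check C^{\ge 1}$, so its cone is quasi-isomorphic to $\Sigma(C_\bullet\otimes\check C^{\ge 1})$. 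That is exactly the complex your Step 3 shows to be acyclic.

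Second, the twist in Step 1 has the wrong sign for the paper's convention $X[\alpha]_\beta=X_{\alpha+\beta}$.
\begin{itemize}
\item $H^d_{\mathfrak m}(A)$ is spanned by the classes of $x^\beta$ with $\beta\le-\vec 1$, while $\D A$ lives in degrees $\le 0$.
\item Hence $H^d_{\mathfrak m}(A[-\alpha])\cong\D(A[\alpha])[\vec 1]\cong I(\alpha-\vec 1)$, whereas $\D(A[\alpha])[-\vec 1]\cong I(\alpha+\vec 1)$.
\item Carried through, your argument proves $C_\bullet\simeq\Sigma^{-d}\D\Hom(C_\bullet,A)[\vec 1]$ in the paper's convention. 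The $[-\vec 1]$ in the statement corresponds to the opposite convention $X[\alpha]_\beta=X_{\beta-\alpha}$.
\item The example $d=1$, $C_\bullet=(A[-1]\oto{x}A)$ confirms this. $H_0(C_\bullet)=\K$ sits in grade $0$, while the homology of $\Sigma^{-1}\D\Hom(C_\bullet,A)$ is $\K$ in grade $+1$.
\end{itemize}
Since you leave the twist to ``checking the rank-one case'', actually do that check rather than taking the sign from the statement.
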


\begin{corollary}\label{cor:resolution_duality}
If $X$ has bounded support and $\dots P_{-n} \to \dots \to P_{-1} \to P_0 \to X$ is a projective resolution of $X$, then 
\[ X \to \D \Hom(P_n, \, A) [-\vec{1}] \to \D \Hom(P_{n-1}, \, A) [-\vec{1}] \to \dots \D \Hom(P_0, \, A) [-\vec{1}] \to 0 \] 
 is an injective resolution of $X$.
\end{corollary}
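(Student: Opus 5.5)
The plan is to apply Matlis duality $\D$ to the projective resolution $P_\bullet \to X$ and then use \autoref{thm:local_duality} to identify the resulting complex with the one in the statement. Concretely, I would start from the augmented resolution $\dots \to P_{-1} \to P_0 \to X \to 0$, which is exact. Since $\D$ is an exact contravariant equivalence ($(\D X)_\alpha = X_{-\alpha}^*$ and vector space duality is exact), $0 \to \D X \to \D P_0 \to \D P_{-1} \to \dots$ is exact. Each $\D P_i$ is injective: for a free module $A[-\alpha]$ one computes $\D A[-\alpha] \simeq I(-\alpha)$, and a finite direct sum of injectives is injective. This is exactly \autoref{prop:proj_inj}.

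The substance is to obtain an injective resolution of $X$ itself rather than of $\D X$. Here I would use \autoref{thm:local_duality} on the complex $C_\bullet = \Hom(P_\bullet, A)$. This complex is again a complex of finitely generated free modules, since $\Hom(A[-\alpha],A) \simeq A[\alpha]$. Its cohomology is $\Ext^\ast(X, A)$. Because $X$ has bounded support (and, being finitely generated, finite length), these Ext-modules vanish below degree $d$ and are Matlis/local-duality dual to $X$. In particular they have bounded support, so the hypothesis of \autoref{thm:local_duality} is satisfied. The theorem then gives $\Hom(P_\bullet, A) \simeq \Sigma^{-d}\D\Hom(\Hom(P_\bullet,A),A)[-\vec 1]$. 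Taking instead $C_\bullet = P_\bullet$, which is quasi-isomorphic to $X$ and so has bounded homology, gives directly
\[ X \simeq P_\bullet \iso \Sigma^{-d} \D \Hom(P_\bullet, A)[-\vec 1] \]
in the derived category. I would use this second form.

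With that isomorphism in hand, I would argue as follows. The complex $\D\Hom(P_\bullet,A)[-\vec 1]$ consists of injectives, because $\Hom(P_i,A)$ is free and $\D$ of a free module is injective. It is bounded below in cohomological degree once we reindex: $\D$ reverses the grading, and $\Sigma^{-d}$ shifts so that $P_{-n}$ lands in degree $0$. Since it is isomorphic to $X$ in the derived category, its cohomology is concentrated in a single degree and equals $X$ there. The complex is therefore an injective resolution of $X$, and the augmentation $X \to \D\Hom(P_n,A)[-\vec 1]$ is the induced map from $X = H^0$.

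The main obstacle is the bookkeeping of degrees. First, I need to check that the shift by $\Sigma^{-d}$ exactly places the $n$-th term in degree $0$; this tacitly requires the resolution to have length $n = d$, which Hilbert's syzygy theorem guarantees for a minimal resolution. Second, I need to check that all cohomology outside degree $0$ vanishes. For that I would use that $\Ext^i(X,A) = 0$ for $i < d$ when $X$ has finite length, the depth/grade argument, or simply the derived isomorphism. I would first try to read this vanishing directly off the derived isomorphism rather than proving it separately.
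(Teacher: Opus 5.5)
Your argument is correct and is essentially the paper's: apply \autoref{thm:local_duality} directly to $C_\bullet = P_\bullet$, note that $\D$ turns the free modules $\Hom(P_i, A)$ into injectives, and read exactness of the augmented complex off the quasi-isomorphism with $X$. The detours through \autoref{prop:proj_inj} and through $C_\bullet = \Hom(P_\bullet, A)$ are not needed.

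One small correction: Hilbert's syzygy theorem only gives length $\leq d$ for a minimal resolution. The reverse inequality comes from the derived isomorphism itself, since for length $n < d$ the shifted complex would vanish in cohomological degree $0$ and so could not have $H^0 \cong X \neq 0$.
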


This means that we get injective resolutions 
basically for free if we compute the whole projective resolution, since the boundary matrices in the injective resolutions are, as graded matrices, the same as the ones in the projective resolution.

In practice, the modules $X$, $Y$ will not have bounded support, but this is not actually an obstacle. We can truncate both modules without changing the homomorphisms between them as long as their Betti numbers are bounded. This is always the case if they are finitely presented.

\begin{notation}
Let $\omega \in \Z^d$. We denote by $\iota \colon \downarrow \omega \into \Z^d$ the inclusion of the lower set generated by $\omega$ and by $\iota^* \dashv \iota_* \colon \grA \to \fun(\downarrow \omega, \vect_\K)$ the restriction and right Kan-extension.
\end{notation}

\begin{proposition}\label{prop:fully_faithful}
Let $X, \ Y \in \grA$ have their Betti numbers bounded by $\omega$. Then \[
\Hom(X, Y) \simeq \Hom(\iota_* \iota^* X, \iota_* \iota^* Y)
\] 
\end{proposition}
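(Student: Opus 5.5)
The plan is to reduce the statement to a comparison of $\Hom(X,Y)$ with homomorphisms of the restricted diagrams over $\downarrow \omega$. Throughout, I read ``Betti numbers bounded by $\omega$'' as: every generator and every relation of the minimal presentations has degree $\leq \omega$. For $X$ this will be the only input the argument uses.

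\textbf{Step 1: compute $\iota_*$ and show it is fully faithful.} I would first compute $\iota_*$ explicitly from the pointwise limit formula for right Kan extensions,
\[ (\iota_* Z)_\alpha = \lim_{\beta \in \downarrow\omega,\ \alpha \leq \beta} Z_\beta . \]
If $\alpha \leq \omega$, the indexing category has initial object $\alpha$, so the limit is $Z_\alpha$. If $\alpha \not\leq \omega$, the indexing category is empty, so the limit is $0$. Hence $\iota_*\iota^* X$ is simply $X$ truncated to $\downarrow\omega$ and extended by zero. Since $\iota$ is the inclusion of a full subposet, the counit $\iota^*\iota_* \Rightarrow \Id$ is an isomorphism, and so $\iota_*$ is fully faithful. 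This can also be checked directly from the explicit formula above. It gives
\[ \Hom(\iota_*\iota^*X, \iota_*\iota^*Y) \simeq \Hom_{\fun(\downarrow\omega,\vect_\K)}(\iota^*X, \iota^*Y). \]
It therefore suffices to show that the restriction map $\iota^* \colon \Hom(X,Y) \to \Hom(\iota^*X,\iota^*Y)$ is a bijection.

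\textbf{Step 2: injectivity of restriction.} This uses the presentation $A[R] \oto{M} A[G] \to X$. A homomorphism $f \colon X \to Y$ is determined by the elements $f(g) \in Y_{\deg(g)}$ for $g \in G$, which is exactly the description of maps out of $A[G]$ used in the proof of \autoref{lem:lifting}. All $\deg(g) \leq \omega$, so these elements are visible in $\iota^* f$. Consequently $\iota^* f = 0$ forces $f = 0$.

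\textbf{Step 3: surjectivity of restriction.} Given $h \colon \iota^*X \to \iota^*Y$, I define $\widetilde f \colon A[G] \to Y$ by $g \mapsto h(d_0(g)) \in Y_{\deg(g)}$.
\begin{itemize}
\item \emph{Descent to $X$.} For each relation $r \in R$, the element $d_0(Mr) = 0$ lives in $X_{\deg(r)}$ with $\deg(r) \leq \omega$. Its image is
\[ \widetilde f(Mr) = \sum_g M_{g,r}\, Y_{\deg(g)\to\deg(r)}\bigl(h(d_0 g)\bigr). \]
Because $h$ is natural on $\downarrow\omega$ and all degrees involved lie in $\downarrow\omega$, this equals $h\bigl(d_0(Mr)\bigr) = 0$. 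So $\widetilde f$ kills $\Img M$ and descends to some $f \colon X \to Y$.
\item \emph{Agreement with $h$.} The maps $\iota^* f$ and $h$ agree on the images of the generators. These images generate $\iota^*X$, because at $\alpha \leq \omega$ the space $X_\alpha$ is spanned by $x^{\alpha-\deg(g)} d_0(g)$ with $\deg(g) \leq \alpha$, and all such $\deg(g)$ lie in $\downarrow\omega$. Hence $\iota^* f = h$.
\end{itemize}

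I expect Step 3 to be the main point. The other steps are formal, while Step 3 is where the Betti bound on $X$ is genuinely used: both the generators and the relations of $X$ must lie in $\downarrow\omega$ for the relation check to take place inside the domain of $h$. The only other place needing care is the identification of $\iota_*$ in Step 1. If one instead takes $\downarrow\omega$ to mean something coarser (for instance bounded-below truncation), the limit formula has to be redone.
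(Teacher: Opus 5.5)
Your proof is correct, and its outer structure matches the paper's. The paper also passes from $\Hom(\iota_*\iota^*X,\iota_*\iota^*Y)$ to $\Hom_{\fun(\downarrow\omega,\vect_\K)}(\iota^*X,\iota^*Y)$ using the adjunction $\iota^*\dashv\iota_*$ and the fact that the counit $\iota^*\iota_*\to\Id$ is an isomorphism. The difference is the final identification with $\Hom(X,Y)$. The paper simply cites \cite[Proposition 4.5]{djk_arXiv}, which says $\iota^*$ is an equivalence on modules with Betti numbers bounded by $\omega$. Your Steps 2 and 3 instead prove the needed full faithfulness directly from a presentation of $X$.

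Your route buys three things. First, it is self-contained. Second, it shows that only $b_0(X),b_1(X)\leq\omega$ is used, with no hypothesis on $Y$. Conceptually, $\Hom(\iota^*X,\iota^*Y)\cong\Hom(\iota_!\iota^*X,Y)$, and the counit $\iota_!\iota^*X\to X$ is an isomorphism once the generators and relations of $X$ lie in $\downarrow\omega$. Third, your explicit computation of $\iota_*$ as extension by zero is something the paper leaves implicit. It also justifies the paper's subsequent remark that $\iota_*\iota^*X$ is presented by adding relations that kill each generator outside $\downarrow\omega$.

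The paper's route is shorter, and the cited result gives a full equivalence of categories, including essential surjectivity. That extra information is not needed for this statement.
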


\begin{proof}
By \cite[Proposition 4.5]{djk_arXiv} the functor $\iota^*$ is an equivalence on the subcategory of modules with Betti-numbers bounded by $\omega$. Furthermore, the counit $\iota^*\iota_* \to \Id$ is an isomorphism since $\downarrow \omega$ is downward closed. We deduce:
\begin{align*} 
    & \Hom(\iota_* \iota^* X, \iota_* \iota^* Y)  
    \iso \Hom_{\fun(\downarrow \omega, \vect_\K)}(\iota^* \iota_* \iota^* X, \iota^* Y) \\
    \iso & \Hom_{\fun(\downarrow \omega, \vect_\K)}(\iota^* X, \iota^* Y) 
    \iso \Hom(X, Y).
\end{align*}
\end{proof}

We can construct a presentation of $\iota_* \iota^* X$ from $X$ by introducing, for each generator $g \in G$ and each $i \in [d]$, a relation $g = 0$ at $\omega_i \in \Z$. 
Moreover, we essentially compute the $\Ext^d(-,\, -)$ groups for free, but we have to be careful: \autoref{prop:fully_faithful} does not work for higher $\Hom$s, so we will compute $\Ext^d(-,\, -)$ of the truncated modules and not of the unbounded ones.

\begin{proposition}\label{prop:duality}
Let $X, Y \in \Der(\grA)$ where $X$ is a perfect complex such that its homology has bounded support. There is a natural equivalence of functors
\[ \Hom_{\Der(\grA)}(Y,\,X) \iso \Sigma^{-d} \D \left( \Hom_{\Der(\grA)}(X, \, Y[-\vec{1}] ) \right)\]
\end{proposition}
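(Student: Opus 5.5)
We reduce the statement to \autoref{thm:local_duality} together with a derived tensor--hom adjunction and the exactness of Matlis duality. Since $X$ is perfect, we may replace it by a bounded complex $C_\bullet$ of finitely generated free modules. Write $C^\vee \coloneqq \Hom_A(C_\bullet, A)$ for its $A$-dual, which is again a perfect complex. Because $C_\bullet$ consists of finitely generated free modules, the internal Hom out of $C_\bullet$ is computed termwise. This gives a natural isomorphism of complexes
\[ \underline{\Hom}(C_\bullet, Y) \cong C^\vee \otimes_A Y, \]
with no derived correction needed on either side.

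The argument then proceeds in three steps.

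\emph{Step 1.} Apply \autoref{thm:local_duality} to $X \simeq C_\bullet$. This is allowed because the homology of $C_\bullet$ has bounded support. It yields $X \simeq \Sigma^{-d}\D(C^\vee)[-\vec 1]$. Then
\[ \Hom_{\Der}(Y, X) \cong \Hom_{\Der}\bigl(Y, \Sigma^{-d}\D(C^\vee[\vec 1])\bigr). \]

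\emph{Step 2.} Use that $\D = \underline{\Hom}(-, I(0))$ with $I(0)$ injective. Then $\D$ is exact and no derived functor is needed. The tensor--hom adjunction, as in \autoref{rem:hom_duality}, now gives
\[ \Hom_{\Der}\bigl(Y, \D(C^\vee[\vec 1])\bigr) \cong \Hom_{\Der}\bigl(Y \otimes^{L} C^\vee[\vec 1], I(0)\bigr). \]
The derived tensor is just the ordinary tensor here, since $C^\vee$ is free termwise. The right-hand side is the degree-$0$ part of $\D$ applied to the complex $Y \otimes C^\vee[\vec 1]$, taken in homological degree $0$. Because $I(0)$ is injective, Hom into it commutes with taking homology.

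\emph{Step 3.} Identify $Y \otimes C^\vee[\vec 1]$ with $\underline{\Hom}(C_\bullet, Y)[\vec 1] = \underline{\Hom}(X, Y[\vec 1])$ in the derived sense. Matching the shift conventions then turns the previous expression into $\Sigma^{-d}\D\bigl(\Hom_{\Der}(X, Y[-\vec 1])\bigr)$. The sign of the shift depends on whether $C^\vee[\vec 1]$ or $C^\vee[-\vec 1]$ appears; this is just bookkeeping with \autoref{thm:local_duality}. Naturality in $X$ and $Y$ follows because every isomorphism used (local duality, adjunction, evaluation of the dual of a free module) is natural.

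The main obstacle is the bookkeeping, not any individual isomorphism. We must check that $\D$ on a $\grA$-valued graded Hom complex, followed by extracting degree $0$ and taking homology, really produces the vector-space-level $\Sigma^{-d}\D(\cdots)$ as stated. We must also ensure that the homological shift $\Sigma^{-d}$ and the grading shift $[-\vec 1]$ land on the correct side. I would first verify the formula on the test case $X = \K$ placed at degree $0$ with $Y = A$. There the Koszul complex makes both sides explicit: $\Ext^d(\K, A)$ is concentrated in degree $\vec 1$. Fixing the conventions on this example first, and only then writing the general chain of isomorphisms, should pin down every sign and shift.
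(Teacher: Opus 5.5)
Your route is the same as the paper's. You rewrite $X$ via \autoref{thm:local_duality}, then use $\Hom(Y,\D(-))\cong\Hom(Y\otimes^{L}-,\,I(0))$. Next you use dualizability of the perfect complex, $\Hom(C_\bullet,A)\otimes Y\cong\Hom(C_\bullet,Y)$, and finally the definition of $\D$ again. The one step to correct is the shift in Step 1, which is where your unresolved sign comes from.

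\textbf{The sign error.} In \autoref{thm:local_duality} the grading shift sits inside $\D$:
\[ X\simeq\Sigma^{-d}\D\Hom(C_\bullet,A[-\vec1])=\Sigma^{-d}\D(C^\vee[-\vec1])=\Sigma^{-d}(\D C^\vee)[\vec1]. \]
This is how the paper's proof uses it. You read it as $(\D C^\vee)[-\vec1]=\D(C^\vee[\vec1])$, which is off by a sign. Your own test case $X=\K$ at degree $0$ shows this. $C^\vee$ has cohomology $\Ext^d(\K,A)$, concentrated at $-\vec1$, so $\Sigma^{-d}(\D C^\vee)[-\vec1]$ is $\K$ placed at $2\vec1$, not at $0$. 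Followed literally, your chain ends at $\Sigma^{-d}\D\Hom(X,Y[\vec1])$. No choice of conventions later in the argument turns that into the stated $Y[-\vec1]$.

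\textbf{With the correct reading.} Step 2 gives $\Sigma^{-d}\D\bigl(Y\otimes C^\vee[-\vec1]\bigr)$. Step 3 then uses $C^\vee[-\vec1]\otimes Y\cong\Hom(C_\bullet,Y[-\vec1])$, and the stated formula comes out directly with no further bookkeeping.

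\textbf{The remaining bookkeeping.} Work throughout with the derived internal $\Hom$, an object of $\Der(\grA)$, as the paper does, rather than with degree-$0$ morphism sets. Then $\Sigma^{-d}$ and the exact functor $\D$ pull out of each isomorphism. You do not need to extract degree-$0$ parts and reassemble them. Your chain-level justifications are fine: $C_\bullet$ is a bounded complex of finitely generated frees, and $\D(C^\vee[-\vec1])$ is a bounded complex of injectives. So every $\Hom$ you write down already computes the derived one.
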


\begin{proof}
\begin{align*}
    & \Hom_{\Der(\grA)}(Y,\,X) \\
    \overset{\makebox[3cm]{$X$ is perfect + \autoref{thm:local_duality}}}{\iso}
    & \Hom_{\Der(\grA)}(Y,\,  \Sigma^{-d} \D \Hom_{\Der(\grA)}( X, \, A[- \vec{1}]) ) \\
     \overset{\makebox[3cm]{Def. of $\D$}}{\iso}
     & \Sigma^{-d} \Hom_{\Der(\grA)}(\Hom_{\Der(\grA)}( X, \, A[- \vec{1}]), \Hom_{\Der(\grA)}(Y, I(0)) ) \\
      \overset{\makebox[3cm]{$\otimes \, \dashv \,  \Hom$}}{\iso}
      & \Sigma^{-d} \Hom_{\Der(\grA)}(\Hom_{\Der(\grA)}( X, \, A[-\vec{1}]) \otimes^L Y, I(0) ) \\
      \overset{\makebox[3cm]{$X$ is dualizable}}{\iso}
      & \Sigma^{-d} \Hom_{\Der(\grA)}(\Hom_{\Der(\grA)}( X, \, Y[-\vec{1}]), I(0) ) \\
      \overset{\makebox[3cm]{Def. of $\D$}}{\iso}
      &\Sigma^{-d} \D \left( \Hom_{\Der(\grA)}( X, \, Y[-\vec{1}]) \right)
\end{align*}
\end{proof}

\begin{corollary}\label{cor:duality}
Let $X, Y \in \grA$ be finitely generated and either $X$ or $Y$ have bounded support. There is a natural equivalence
\[\Ext^i(X, \, Y) \iso D \Ext^{N-i}(Y, \, X)[-1].\]
\end{corollary}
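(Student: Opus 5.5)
The plan is to derive the corollary from \autoref{prop:duality} by specialising to modules concentrated in a single homological degree. Here $N$ should be read as the number of parameters $d$, and $[-1]$ as the shift by $-\vec{1}$. The statement is symmetric under exchanging the roles of $X$ and $Y$ together with $i \leftrightarrow d-i$, since $\D$ is an involution on finitely generated or bounded modules. It therefore suffices to treat the case where $X$ has bounded support.

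The first step is to check the hypotheses of \autoref{prop:duality}. A finitely generated $X$ over $A=\K[x_1,\dots,x_d]$ has a finite free resolution of length at most $d$ by Hilbert's syzygy theorem. Viewed in $\Der(\grA)$ via this resolution, $X$ is therefore a perfect complex, and its homology (namely $X$ in degree $0$) has bounded support by assumption. We then apply \autoref{prop:duality} to the pair $(X, Y[\vec 1])$, or directly with the shifted target, to obtain a natural isomorphism of graded complexes
\[\Hom_{\Der}(Y,X) \iso \Sigma^{-d}\D\,\Hom_{\Der}(X, Y[-\vec1]).\]

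The second step is to take cohomology in degree $i$. The left side gives $\Ext^i(Y,X)$. On the right, $\D$ is exact on graded vector spaces, since it is degreewise vector space duality, so it commutes with taking cohomology while reversing the homological degree. Together with $\Sigma^{-d}$ this sends degree $i$ to degree $d-i$. We thus obtain $\Ext^i(Y,X)\cong \D\Ext^{d-i}(X,Y[-\vec1])$, and since $\Ext^{j}(X,Y[-\vec1]) = \Ext^j(X,Y)[-\vec1]$, this becomes $\D(\Ext^{d-i}(X,Y)[-\vec1])$. Applying $\D$ once more, using $\D\D\cong \Id$ on modules that are finite-dimensional in each degree, and using $\D(Z[\alpha]) = (\D Z)[-\alpha]$, we rename indices ($i\mapsto d-i$) and move the shift across $\D$. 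This yields $\Ext^i(X,Y)\cong \D\Ext^{d-i}(Y,X)[-\vec1]$, up to the sign convention of the shift, which I would fix by comparing with the $i=d$, $X=Y=I$-type example $\Ext^d(\K,A)=\K[\vec1]$.

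The main obstacle is this bookkeeping of gradings. The cohomological-versus-homological direction of $\Sigma^{-d}$, the sign of the $\vec1$ twist after dualising, and the legitimacy of $\D\D\cong\Id$ all need care. The last requires the Ext modules to be degreewise finite-dimensional, which follows from finite generation of $X$ and $Y$. I would first verify the index convention on the case $X=Y=\K$, where $\Ext^i(\K,\K)$ is an exterior-algebra piece. This makes it possible to pin down whether the twist is $[-\vec1]$ or $[\vec1]$ and whether the case where $Y$ (rather than $X$) is bounded needs the symmetric argument with the roles exchanged.
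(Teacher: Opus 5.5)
Your proposal is correct and follows the route the paper intends: treat the finitely generated modules as perfect complexes via finite free resolutions, apply \autoref{prop:duality}, take cohomology using exactness of $\D$, and cover the other boundedness case with $\D\D\cong\Id$ on the degreewise finite-dimensional Ext modules. Done carefully, your bookkeeping gives $\Ext^i(X,Y)\cong \D\bigl(\Ext^{d-i}(Y,X)[-\vec{1}]\bigr) = \bigl(\D\Ext^{d-i}(Y,X)\bigr)[\vec{1}]$, so the shift in the statement must be read inside $\D$; your proposed test case $\Ext^1(\K,A)=\K[\vec{1}]$ for $d=1$ confirms this reading.
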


\subparagraph{Output of the dual Algorithms.}
If we compute $\Hom(X,Y)$ via an injective resolution, then we will get a representative of a homomorphism not in terms of the generators of $X$ and $Y$, but in terms of their \emph{co}generators, that is, in a basis of the injective envelopes of $X$ and $Y$. By \autoref{thm:local_duality}, this is equivalently a set of generators for $DX$ and $DY$ and so they exist generally only after truncating or have to be assumed to lie at $\infty$ in some coordinates. The latter can be seen as passing to a compactification $\fun( \Z \ \cup \{\infty, - \infty\}, \vect_\K)$ of $\grA$.

Depending on the application, one can continue working with the injective envelopes. If instead we need to work with the presentations, this leaves the question of how to compute the lifts to the projective covers. Let now $f \in \Hom(X,Y)$ be such a map and $f^0 \colon I_0 \to I'_0$ its extension to envelopes, the output from either of the dual algorithms. 

One possibility is to compute iterative extensions of $f^0$ to get a chain map $f^* \colon I_* \to I'_*$ between the whole injective resolutions. Then, using \autoref{thm:local_duality} again, the desired lift to covers $f_0 \colon F_0 \to F_0'$ is just $D\Hom(f^d, A)$, as in the following diagram.

% https://q.uiver.app/#q=WzAsMTQsWzIsMCwiWCJdLFsyLDEsIlkiXSxbMywwLCJJXzAiXSxbNCwwLCJJXzEiXSxbNSwwLCJcXGRvdHMiXSxbNiwwLCJJX2QiXSxbMywxLCJJXzAnIl0sWzQsMSwiSV8xJyJdLFs2LDEsIklfZCciXSxbNSwxLCJcXGRvdHMiXSxbMSwwLCJEXFxIb20oSV9kLCBBKSJdLFsxLDEsIkRcXEhvbShJX2QnLCBBKSJdLFswLDAsIlxcZG90cyJdLFswLDEsIlxcZG90cyJdLFswLDIsImReMCJdLFsxLDYsImVeMCJdLFsyLDMsImReMSJdLFs2LDcsImVeMSJdLFszLDQsImReMiJdLFs3LDksImVeMiJdLFs0LDVdLFs5LDhdLFsyLDYsImZeMCJdLFszLDcsImZeMSJdLFs1LDgsImZeZCJdLFswLDEsImYiXSxbMTAsMCwiZF8wIl0sWzExLDEsImVfMCJdLFsxMCwxMSwiRFxcSG9tKGZeZCwgQSkiLDJdLFsxMiwxMF0sWzEzLDExXV0=
\[\begin{tikzcd}[ampersand replacement=\&]
	\dots \& {D\Hom(I_d, A)} \& X \& {I_0} \& {I_1} \& \dots \& {I_d} \\
	\dots \& {D\Hom(I_d', A)} \& Y \& {I_0'} \& {I_1'} \& \dots \& {I_d'}
	\arrow[from=1-1, to=1-2]
	\arrow["{d_0}", from=1-2, to=1-3]
	\arrow["{D\Hom(f^d, A)}"', from=1-2, to=2-2]
	\arrow["{d^0}", from=1-3, to=1-4]
	\arrow["f", from=1-3, to=2-3]
	\arrow["{d^1}", from=1-4, to=1-5]
	\arrow["{f^0}", from=1-4, to=2-4]
	\arrow["{d^2}", from=1-5, to=1-6]
	\arrow["{f^1}", from=1-5, to=2-5]
	\arrow[from=1-6, to=1-7]
	\arrow["{f^d}", from=1-7, to=2-7]
	\arrow[from=2-1, to=2-2]
	\arrow["{e_0}", from=2-2, to=2-3]
	\arrow["{e^0}", from=2-3, to=2-4]
	\arrow["{e^1}", from=2-4, to=2-5]
	\arrow["{e^2}", from=2-5, to=2-6]
	\arrow[from=2-6, to=2-7]
\end{tikzcd}\]

If $f^d$ is a graded matrix after choosing bases, then both duality functors just transpose this matrix, so as a graded matrix, $f^d$ is already the correct result. Still, doing this would involve solving the linear system $f^{i} \circ d^{i-1} = e^{i-1} \circ f^{i-1}$ for each $i \in 1 \dots d$, for every computed $f^0 \in \Hom(X,Y)$. If instead we knew the maps $d^0 \circ d_0$ and $e^0 \circ e_0$, which can also be represented by graded matrices, we would only have to solve one linear system to compute $f^d$: $ (e^0 \circ e_0) \circ D\Hom(f^d,A) = f^0 \circ d^0 \circ d_0 $. These composites are 
\emph{flat-injective presentations} of $X$ and $Y$ \cite{Miller2001}. Thankfully, Lenzen recently showed how to compute these in $\Oc(n^3)$/$\Oc(m^3)$ time \cite[Theorem 5.7]{lenzen24} \cite[Corollary 3.8.3]{Lenzen_diss}, and also provides an implementation.

\subsection{Duality and Algorithm A}

Given a minimal injective resolution $X \oto{d^0} I_0 \oto{d^1} \dots$ there is a dual version of the syzygy module: $\Omega^{-i}(X) \coloneqq \ker d^{i}$. It follows that there is an equivalent notion of a restriction systems for injective resolutions. Using Matlis duality, the dual of \autoref{thm:main} then says that any homomorphism can be extended uniquely to to an injective restriction system.

\begin{algorithm}[H]
\renewcommand{\thealgocf}{A*}
\caption{Dual Restricted Computation of Homomorphisms of Presentations.}
\label{alg:restricted_dual}
\DontPrintSemicolon
\KwIn{Presentation matrices $M \in \K^{G \times R}$, $N \in \K^{G' \times R'}$ of \emph{bounded} modules.}
\KwOut{Co-generators $S, S'$ of $\coker M, \coker N$ and graded matrices $\{Q_i\}_{i \in I} \in \K^{S' \times S}$ which represent a basis of $\Hom(X, \, Y)$ coordinatised in $S, S'$.}

Complete $M, N$ to projective resolutions with last terms $O \colon F^S \to F^T$ and $P \colon F^{S'} \to F^{T'}$ ;

$\{Q^t_i\}_{i \in I} \in \K^{S' \times S} \coloneqq $ \autoref{alg:restricted}( $P^t[(1, \dots ,1)], \, O^t[(1, \dots ,1)]$ ) ;

return $\{Q_i\}_{i \in I}$ ;

\end{algorithm}

\begin{proposition}
\autoref{alg:restricted_dual} is correct and for $2$-parameter modules, using \emph{column-reduction}, needs 
$\Oc( m^3 n \left( \thick{X} + \thick{  \Omega^{-1}(X) } \right)^2 + nm^3)$ time and 
$\Oc \left( n^2 m \left( \thickb{X} + \thick{ \Omega^{-1}(X)} \right)  +nm^2\right)$ memory.
\end{proposition}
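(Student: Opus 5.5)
We split the claim into correctness and the complexity bound, and deduce both from the corresponding statements for \autoref{alg:restricted} applied to the dual modules. The guiding identity is $\Hom(X,Y)\simeq\Hom(\D Y,\D X)$, since Matlis duality is a contravariant equivalence.

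\emph{Correctness.} Since $X$ and $Y$ have bounded support (after truncation, which does not change $\Hom$ by \autoref{prop:fully_faithful}), \autoref{cor:resolution_duality} applies. It shows that $\D\Hom(F_\bullet,A)[-\vec 1]$ is an injective resolution of $X$, with the boundary matrices of the free resolution transposed. Applying $\D$ once more, the complex $\Hom(F_\bullet,A)[\vec 1]$, whose differentials are the transposed graded matrices, is a free resolution of $\D X$ up to the shift by $\vec 1$ (\autoref{prop:proj_inj}). The resolution has length at most $d$ by Hilbert's syzygy theorem. Its first two terms are therefore $O^t[\vec 1]\colon A[T]\to A[S]$, so $O^t[\vec 1]$ presents $\D X$ with generators $S$; likewise $P^t[\vec 1]$ presents $\D Y$ with generators $S'$. \autoref{thm:alg_a} then guarantees that \autoref{alg:restricted}$(P^t[\vec1],O^t[\vec1])$ returns graded matrices $Q^t_i\in\K^{S\times S'}$ descending to a basis of $\Hom(\D Y,\D X)\simeq\Hom(X,Y)$. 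Transposing back interprets them as maps between the injective hulls $\D A[S]$ and $\D A[S']$, that is, in cogenerator coordinates. One checks that $\D$ sends the lift $A[S']\to A[S]$ to the extension on injective hulls, by naturality of the isomorphism in \autoref{thm:local_duality}.

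\emph{Runtime.} We substitute into \autoref{prop:A_runtime}, with the roles of the two inputs swapped, since now the target module is $\D X$.
\begin{itemize}
\item The resulting bound is $\Oc(|P|^3|O|(\thick{\D X}+\thick{\Omega^1\D X})^2+|P||O|^3+T_{\ker})$.
\item Pointwise duality gives $\thick{\D X}=\thick X$. Likewise $\Omega^1(\D X)=\D\,\Omega^{-1}(X)$ by \autoref{prop:proj_inj}, so $\thick{\Omega^1\D X}=\thick{\Omega^{-1}X}$.
\item For $d=2$, completing the resolution is one kernel computation per input, costing $\Oc(n^3)$ and $\Oc(m^3)$ by column reduction (LWKR).
\item The sizes satisfy $|O|=\Oc(m)$ and $|P|=\Oc(n)$. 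This is because in two parameters the second syzygy is free with rank at most the number of relations, by the Euler characteristic of the resolution.
\end{itemize}
Writing the bound in the statement's variables and absorbing the kernel term gives the claim. The memory bound comes from the same count: the linear system has at most $\Oc(n^2m\cdot(\ldots))$ nonzero entries, one equation block per generator pair times the restricted number of variables. Here $O(n)$ comes from the restriction system size in the dual roles.

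\emph{Main obstacle.} The delicate step is the bookkeeping of which of $m$ and $n$ appears cubed once the roles are swapped, together with the identification $\Omega^1\D X\cong\D\Omega^{-1}X$ including the shifts $[\vec 1]$. I would verify these by writing out \autoref{cor:resolution_duality} for a length-2 resolution explicitly. I would also check that truncation at $\omega$ adds only $\Oc(m)$ and $\Oc(n)$ generators and relations, so the size estimates survive.
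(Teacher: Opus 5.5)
Your approach is the paper's. Its proof is a single line: \autoref{thm:alg_a} plus \autoref{thm:local_duality}, together with $\Omega^1(\D X)\simeq \D\,\Omega^{-1}(X)$. Your correctness argument fills that line in soundly: the transposed resolution is a minimal free resolution of $\D X$, $\Hom(X,Y)\simeq\Hom(\D Y,\D X)$, $\D$ preserves thickness, and in two parameters $b_2\le b_1$, which gives $|O|=\Oc(m)$.

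The gap is in the step you deferred as the ``main obstacle''. That step does not close, and asserting that it ``gives the claim'' is wrong. In the call \autoref{alg:restricted}$(P^t[\vec 1],O^t[\vec 1])$ the domain is $\D Y$, of size $\Oc(n)$, and the target is $\D X$, of size $\Oc(m)$. The linear system therefore has at most $n(\thick{X}+\thick{\Omega^{-1}(X)})$ variables and at most $nm$ equations. Column reduction then costs $\Oc\bigl(n^3m(\thick{X}+\thick{\Omega^{-1}(X)})^2\bigr)$, which is exactly your own $|P|^3|O|$. This is not bounded by the printed $m^3n(\cdots)^2+nm^3$: fix $m$ and let $n\to\infty$. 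The same holds for the $\Oc(n^3)$ cost of completing the resolution of $N$, which you say is ``absorbed''.

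What your argument actually proves is the bound with $m$ and $n$ interchanged in the first term, $\Oc\bigl(n^3m(\thick{X}+\thick{\Omega^{-1}(X)})^2+nm^3\bigr)$. The paper's proof, which only cites \autoref{prop:A_runtime}, supports the same bound. It is also the only reading consistent with the memory bound $\Oc\bigl(n^2m(\cdots)+nm^2\bigr)$, which you correctly read off from the same $nm\times n(\cdots)$ system. You should state this corrected bound explicitly and note that the printed first term appears to have $m$ and $n$ transposed, rather than claim the literal statement.
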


\begin{proof}
Follows from \autoref{thm:alg_a} and \autoref{thm:local_duality}. For the run-time and memory observe that $\Omega^1(DX) \simeq D\Omega^{-1}(X)$.
\end{proof}

\subsection{Duality and Algorithm B}
To not only appeal to abstract duality, we will describe the linear system that one would compute.
Let $0 \to Y \to I^S \oto{P} I^R$ be an injective co-presentation. Applying $\Hom(X, - )$ yields the exact sequence
\[0 \to \Hom(X, Y) \to \Hom(X, I^S)  \oto{P_*} \Hom(X, I^R). \]

\begin{proposition}\label{prop:dual_description}
The map $\bigoplus\limits_{s \in S} X_s \iso \Hom(X, I^S)  \oto{P_*} \Hom(X, I^R) \iso \bigoplus\limits_{r \in R} X_r $ is given by
\[ \left( P_{r, s} \cdot X^t_{\deg(r) \to \deg(s)} \right)_{r \in R,s \in S} .\]
\end{proposition}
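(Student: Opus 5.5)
This is the mirror image of \autoref{prop:M_star}, now with cofree modules in place of free ones. We first fix the identification $\Hom(X, I^S) \cong \bigoplus_{s \in S} X_s$ (with $X_s$ meaning $X_{\deg(s)}$) and then evaluate $P_*$ on elements supported in a single summand.

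\textbf{The identification.} Here $I^S = \bigoplus_{s \in S} I(\deg(s))$, and $I(\sigma)$ is the module that is $\K$ on the downset $\downarrow \sigma$, with identity structure maps there. For finite $S$ the direct sum equals the product, so $\Hom(X, I^S) \cong \prod_s \Hom(X, I(\deg(s)))$. Each factor is handled by the dual Yoneda isomorphism
\[ \Hom(X, I(\sigma)) \cong X_\sigma^*. \]
The forward direction sends $\phi$ to $\phi_\sigma$. The inverse sends $\lambda \in X_\sigma^*$ to the map whose component at $\alpha \le \sigma$ is $\lambda \circ X_{\alpha \to \sigma}$ and which is zero elsewhere. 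So componentwise the summand is naturally $X_{\deg(s)}^*$. After choosing bases it is identified with $X_s$, and linear maps on it are written as transposes; this is exactly why $X^t$ appears in the formula.

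\textbf{Description of $P$.} $P$ is a graded matrix between cofree modules. Its entry $P_{r,s}$ is a scalar multiple of the unique, up to scalar, map $I(\deg(s)) \to I(\deg(r))$. Such a map is nonzero only if $\deg(r) \le \deg(s)$, and then it is the identity on $\downarrow \deg(r)$.

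\textbf{Evaluating $P_*$.} Take $\lambda \in X_{\deg(s)}^*$ placed in the $s$-th summand, and let $\phi_\lambda$ be the corresponding map. We compose with $P$ and read off the $r$-th component at degree $\deg(r)$. At $\deg(r) \le \deg(s)$ the map $\phi_\lambda$ is $\lambda \circ X_{\deg(r) \to \deg(s)}$. Composing with $P_{r,s}\cdot(\text{canonical map})$, which is the identity at $\deg(r)$, gives
\[ P_{r,s}\, \lambda \circ X_{\deg(r)\to\deg(s)} = P_{r,s}\,(X_{\deg(r)\to\deg(s)})^t(\lambda). \]
Summing over $s$ gives the matrix $\left(P_{r,s} X^t_{\deg(r)\to\deg(s)}\right)$. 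Entries with $\deg(r) \not\le \deg(s)$ are zero, as in \autoref{prop:M_star}.

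\textbf{Main obstacle.} The delicate step is keeping the dualisation conventions straight: which index carries which degree, the direction of the order relation, and the fact that $\Hom(X, I(\sigma))$ is the dual $X_\sigma^*$ rather than $X_\sigma$. The identification with $\bigoplus_s X_s$ has to be made compatibly so that the transposed structure map appears. As a cross-check, I would verify the result against Matlis duality: $\D$ turns $P$ into a free presentation matrix $P^t$ of $\D Y$ and turns $X$ into $\D X$. Applying \autoref{prop:M_star} to $\Hom(\D Y, \D X)$ then yields the same formula, since $(\D X)_{-\beta \to -\alpha} = X^*_{\alpha\to\beta}$.
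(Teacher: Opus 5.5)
Your proposal is correct and takes the paper's route. The paper's proof consists only of the remark that the statement is dual to \autoref{prop:M_star}. Your explicit computation via the dual Yoneda isomorphism $\Hom(X, I(\sigma)) \cong X_\sigma^*$, together with your Matlis-duality cross-check through $\Hom(\D Y, \D X)$, is exactly that dualisation written out.
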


\begin{proof}
Dual to \autoref{prop:M_star}.
\end{proof}

\begin{algorithm}[H]
\renewcommand{\thealgocf}{B*}
\caption{Dual Restricted Computation of Homomorphisms of Presentations.}
\label{alg:exact_dual}
\DontPrintSemicolon
\KwIn{Presentation matrices $M \in \K^{G \times R}$, $N \in \K^{G' \times R'}$ of \emph{bounded} modules.}
\KwOut{Co-generators $S, S'$ of $\coker M, \coker N$ and graded matrices $\{Q_i\}_{i \in I} \in \K^{S' \times S}$ which represent a basis of $\Hom(X, \, Y)$ coordinatised in $S, S'$.}

Complete $M, N$ to projective resolutions with last terms $O \colon F^S \to F^T$ and $P \colon F^{S'} \to F^{T'}$ ;

$\{Q^t_i\}_{i \in I} \in \K^{S' \times S} \coloneqq $ \autoref{alg:hom_exact}( $P^t[(1, \dots ,1)], \, O^t[(1, \dots ,1)]$ ) ;

return $\{Q_i\}_{i \in I}$ ;

\end{algorithm}

\begin{proposition}\label{prop:dual_exact_correct}
\autoref{alg:exact_dual} is correct and has a worst-case runtime in $\Oc(n^3 \thickb{X}^3 + nm^3)$.
\end{proposition}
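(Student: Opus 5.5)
The plan is to reduce the statement to \autoref{prop:exact_correct}, applied to the Matlis duals, in the same way the dual of \autoref{alg:restricted} was handled. By \autoref{prop:fully_faithful}, truncation (which \autoref{alg:exact_dual} assumes has already been done) does not change $\Hom(X,Y)$, so we may work with bounded modules. Since $\D$ is a contravariant equivalence, $\Hom(X,Y)\simeq \Hom(\D Y,\D X)$. Completing $M$ and $N$ to full free resolutions and applying \autoref{cor:resolution_duality} (equivalently \autoref{thm:local_duality}) gives injective resolutions of $X$ and $Y$. Their last free terms $O$ and $P$, transposed and shifted by $(1,\dots,1)$, are therefore presentation matrices of $\D X$ and $\D Y$. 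This is because $\D\Hom(-,A)$ acts on graded matrices by transposition, up to the shift.

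With this identification, \autoref{alg:hom_exact} run on $(P^t[\vec 1], O^t[\vec 1])$ computes a basis of $\Hom(\D Y,\D X)$. The output is given as graded matrices between the generators of $\D Y$ and $\D X$, that is, between the cogenerators $S',S$. Correctness then follows from \autoref{prop:exact_correct}. Alternatively, one can use the concrete description in \autoref{prop:dual_description}, which identifies the kernel computed with the kernel of $P_*$ in the left-exact sequence $0\to\Hom(X,Y)\to\Hom(X,I^S)\to\Hom(X,I^R)$.

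For the runtime, I substitute into the bound $\Oc(m'^3\thickb{Y'}^3+m'n'^3)$ of \autoref{prop:exact_correct}, where the domain is $Y'=\D Y$ and the target is $\D X$. The relevant thickness is that of the target, and $\thickb{\D X}=\thickb{X}$ because $(\D X)_\alpha=X_{-\alpha}^*$. The roles of $m$ and $n$ swap, which gives $\Oc(n^3\thickb{X}^3+nm^3)$, matching the claim.

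The main obstacle is bounding the sizes of $O$ and $P$ by $\Oc(m)$ and $\Oc(n)$. The last terms of the resolutions have ranks given by the Betti numbers $b_d$ of the truncated modules. I would argue this in one of two ways. One is to cite Lenzen's flat-injective presentation result, which computes the cogenerators directly in cubic time. The other is to note that for $d=2$ the resolution has length $2$, and the kernel rank is bounded by the number of relations. The cost of completing the resolutions must also be checked: it should be $\Oc(n^3+m^3)$ via LWKR/kernel computation, and so it is absorbed into the stated bound.
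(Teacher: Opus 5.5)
Your reduction is the same as the paper's. Its proof is a one-line appeal to \autoref{prop:exact_correct} (or \autoref{prop:dual_description}) together with \autoref{thm:local_duality}: run \autoref{alg:hom_exact} with domain $\D Y$ and target $\D X$, use $\thickb{\D X}=\thickb{X}$, and swap the roles of $m$ and $n$. You go further than the paper by naming what this substitution silently assumes. It assumes that the dual presentations $O^t[\vec 1]$ and $P^t[\vec 1]$ have size $\Oc(m)$ and $\Oc(n)$, and that the cost of completing the resolutions is absorbed into the bound.

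Your justification of these assumptions only covers $d=2$. Your first route also does not work as phrased: a cubic running time for computing a flat-injective presentation bounds the number of cogenerators only cubically, not linearly.

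What does work for $d=2$ is a rank count. The truncated presentation $[M \mid T]$ of $X$ has $|G|$ rows and $|R|+2|G|$ columns. Its kernel is free, and since the truncated module has bounded support it is torsion, so this kernel has rank $|R|+2|G|-|G|=m$. Hence $|S|\le m$ and $O^t[\vec 1]$ has size at most $3m$; likewise $P^t[\vec 1]$ has size at most $3n$. The kernels are computed in $\Oc(m^3+n^3)$.

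For $d\ge 3$ neither ingredient is available. The cost of kernel computations is open, and the top Betti number of the truncation need not be linear in $m$. For generic Artinian monomial ideals in $d\ge 4$ variables it can grow like $m^{\lfloor d/2\rfloor}$. In that case the substitution only gives the bound with $m,n$ replaced by the sizes of the dual presentations. The proposition should therefore be read for $d=2$, as is stated explicitly for \autoref{alg:restricted_dual}; the paper's proof does not address this either.

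A minor point: $\D\Hom(-,A)$ leaves graded matrices unchanged. The transpose in $O^t$ comes from applying $\D$ to the injective copresentation of $X$.
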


\begin{proof}
Follows from \autoref{prop:dual_description} or \autoref{prop:exact_correct} and \autoref{thm:local_duality}.
\end{proof}

\section{Experiments}

We will use the "$1.5$mm" data set from \cite{Vipond21} containing the locations of different immune cells (CD8, CD68, FoxP3) in histological samples of a tumour. We have computed density-Delaunay bifiltrations \cite{akll-delaunay} and both $H_0$ and $H_1$ of these $2$-dimensional point sets. Our code is publicly available \footnote{https://github.com/JanJend/Persistence-Algebra}

\subparagraph{Layer-Thickness and Thickness}
We have computed the layer-thickness of every module from the data set for \autoref{fig:layer_thick}. Observe that $H_0$ produces relatively thick indecomposables because all generators appear at scale $0$, whereas every edges has a non-zero length and thus appears later. $H_1$-modules have rather thin layers.

\begin{figure}[H]
\centering
\includegraphics[width=0.8\linewidth]{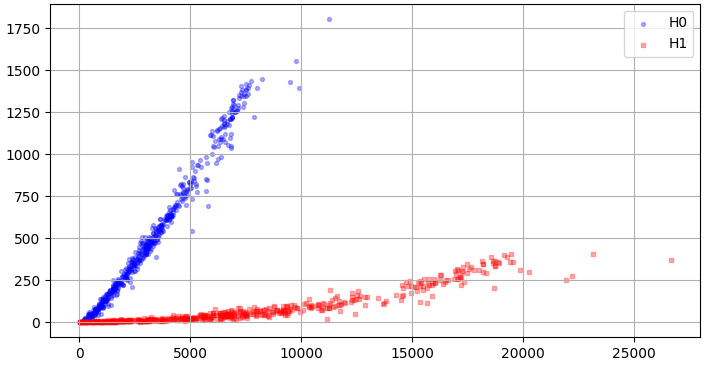}
\caption{Layer-thickness (y) and $|b_0+b_1|$ (x) for $H_0$ and $H_1$.}
\label{fig:layer_thick}
\end{figure}

The thickness of indecomposable summands also correlates strongly with the cardinality of their graded Betti-numbers in this dataset as visible in \autoref{fig:thick_distribution}.

\subparagraph{Runtimes.}
We have tested every non-dual algorithm by computing a basis of $\End(X)$ for each non-interval indecomposable summand $X$ appearing in the $H_1$-modules from the $1.5$mm dataset. 
A logarithmic plot, \autoref{fig:runtime_log} is found in the appendix.

For this comparison, we also included a version, "No Reduction", of \autoref{alg:direct} \emph{without} the reduction step in \autoref{homotopy_reduction}. Surprisingly, "No Reduction" is generally faster than \autoref{alg:hom_exact} in this experiment (\autoref{fig:runtime_log}). Because it is difficult to see how the algorithms fare on average on this dataset, we have the functions $n^{c_i}$ that best approximate the run-times.  We show these curves, overlaid on the non-scaled runtimes2, in the next \autoref{fig:regression}. 

\begin{figure}[H]
\includegraphics[width=0.9\linewidth]{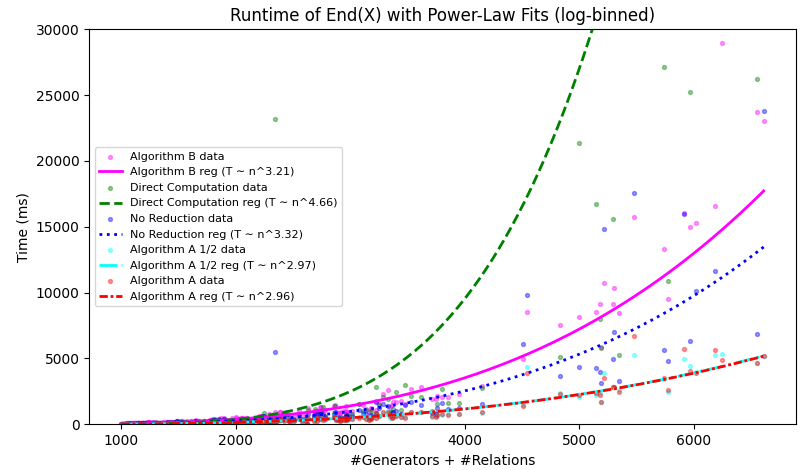}
\caption{Estimates for exponents of runtime on this dataset. Some runtimes of \autoref{alg:direct} are too high to be visible in this plot.}
\label{fig:regression}
\end{figure}

The $c_i$ are between $3$ for \autoref{alg:restricted} and \autoref{alg:mixed} and $4.7$ for \autoref{alg:direct}. Since \autoref{fig:thick_distribution} shows a roughly linear correlation between $\thick{X}$ and size, we suspect that these low exponents come from the sparsity of the input matrices (\autoref{prop:thickness_sparsity}).

\subparagraph{Multicover Bifiltrations.}
We consider modules coming from the multicover bifiltration and a $3d$-mesh filtered by $x$- and $y$-coordinate because these are qualitatively different than the modules from the density-alha bifiltrations we have used in the previous paragraphs. 

In \autoref{tab:experiment_2_results} we calculate the sizes and sparsity of the main linear systems solved by the algorithms.
We see that, as predicted by the complexity analysis, \autoref{alg:hom_exact} produces the smallest linear system. Similarly to \autoref{fig:regression}, the other calculations in \autoref{alg:hom_exact} and \autoref{alg:restricted} need so much time that the order of run-times of the algorithms is exactly the opposite of the order of the system-sizes.

\begin{table}[H]
\centering
\footnotesize
\begin{tabular}{@{}l l
    S[table-format=5.0, table-space-text-post=k]
    S[table-format=5.0, table-space-text-post=k]
    S[table-format=1.2]
    S[table-format=3.2]@{}}
\toprule
\textbf{Module Type} & \textbf{Variable} & \textbf{Alg B} & \textbf{Alg A} & \textbf{Alg A}$\frac{1}{2}$ & \textbf{Standard} \\
\midrule
\multirow{5}{*}{\shortstack[l]{Multicover \\ 46 Points \\$H_1$}}
    & Variables        & {\textbf{8k}}       & {425k}     & {652k}     & {803k} \\
    & Equations        & {14k}      & {431k}     & {431k}     & {431k} \\
    & Avg no. Entries  & {1.00}     & {1.46}     & {1.50}     & {1.90} \\
    & Time (s)         & {0.80}     & {0.53}     & {\textbf{0.49}}     & {0.98} \\
\midrule
\multirow{5}{*}{\shortstack[l]{Multicover \\ 96 Points \\ $H_1$}}
    & Variables        & {\textbf{44k}}     & {3710k}    & {5968k}    & {7396k} \\
    & Equations        & {78k}      & {3745k}    & {3745k}    & {3745k} \\
    & Avg no. Entries  & {1.18}     & {1.62}     & {2.06}     & {1.80} \\
    & Time (s)         & {18.40}    & {11.76}    & {\textbf{7.62}}     & {19.26} \\
\midrule
\multirow{5}{*}{\shortstack[l]{Multicover \\ 175 Points  \\$H_1$}}
    & Variables        & {\textbf{163k}}     & {17154k}   & {27983k}   & {34748k} \\
    & Equations        & {300k}     & {17291k}   & {17291k}   & {17291k} \\
    & Avg no. Entries  & {0.96}     & {1.80}     & {2.50}     & {2.16} \\
    & Time (s)         & {222.95}   & {117.21}   & {\textbf{68.09}}    & {251.70} \\
\midrule
\midrule
\multirow{5}{*}{\shortstack[l]{x-y-filtration \\
of 3D Model \\ of a Hand 
$H_0$}}
    & Variables        & {\textbf{6014k}}    & {26116k}   & {26116k}   & {38766k} \\
    & Equations        & {7517k}    & {27620k}   & {27620k}   & {27620k} \\
    & Avg no. Entries  & {2.18}     & {2.86}     & {2.60}     & {2.76} \\
    & Time (s)         & {474.79}   & {183.40}   & {\textbf{172.64}}   & {timeout} \\
\bottomrule
\end{tabular}
\caption{Size of linear system and wall time to compute a basis of $\End(X)$ for large sparse presentation matrices from the multicover bi-filtration.}
\label{tab:experiment_2_results}
\end{table}

\section{Conclusion and Open Problems}

\subsection*{Improved Runtime of \textsc{AIDA}}

\begin{corollary}
Let $X$ be be presented by a graded matrix of size at most $n \times n$ and have at most $k$ relations of the same degree.  \autoref{alg:hom_exact} improves the runtime of \textsc{AIDA} on $X$ to 
\[ \Oc \left( n^{\omega +1} \left(  n +  k^{\omega -1}(k+\thick{X})q^{k^2/4 + \Oc(k)} + \sum_{Y \in \text{Ind}(X)} \thickb{Y}^\omega  \right) \right) \] 
\end{corollary}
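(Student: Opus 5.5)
The plan is to start from the complexity analysis of \textsc{aida} in \cite{djk_arXiv} and replace only the cost of its homomorphism computations by the bound for \autoref{alg:hom_exact} with fast matrix multiplication (\autoref{rem:fast_mm}). The analysis there splits the runtime of \textsc{aida} into three parts:
\begin{enumerate}
\item the column-sweep that processes the presentation batch by batch of equal-degree relations, whose cost with fast matrix reduction is $\Oc(n^{\omega+2})$, giving the summand $n^{\omega+1}\cdot n$;
\item the brute-force search over decompositions of a batch of at most $k$ relations, i.e.\ over subspaces of $\K^k$, whose number is $q^{k^2/4+\Oc(k)}$ by the standard Gaussian-binomial count, each step costing a block reduction of size governed by $k$ and by the current summands;
\item the calls to a Hom-solver between (indecomposable) summands of submodules of $X$, which is the term we want to improve.
\end{enumerate}
I would recall these three terms precisely from \cite[Section 5]{djk_arXiv} and keep the first two unchanged.

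For the third term, every call computes $\Hom(Z,W)$ with $W$ an indecomposable summand that is built during the algorithm. I would first argue that each such $W$ is a submodule of (a cover-restricted part of) $X$ whose thickness is bounded by the thickness of an indecomposable summand $Y$ of $X$, or by $\lthick{X}$. To do this I would use \autoref{cor:thickness_area}: only the dimensions of $W$ at the degrees of generators and relations of $Z$ matter, and these are dimensions of summands of $X$ restricted below the current batch degree.

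Then \autoref{rem:fast_mm} gives, per call, $\Oc(m^\omega \thickb{W}^\omega + m n^\omega)$. The $mn^\omega$ part is absorbed in the first term by precomputing the structure maps $Y_{\alpha\to\beta}$ once per batch via \autoref{alg:cokernel}. Summing over the at most $n$ batches and at most $n$ pairs of summands per batch, and using $\sum m \le n$, yields $n^{\omega+1}\sum_{Y}\thickb{Y}^\omega$.

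The $k^{\omega-1}(k+\lthick{X})$ factor in the second term arises because each of the $q^{k^2/4+\Oc(k)}$ candidate decompositions requires a Hom-check between a summand and a module of thickness at most $k+\lthick{X}$. That check is now done with \autoref{alg:hom_exact} on a system of size about $k \times (k+\lthick{X})$, instead of the previous $n^{\cdot}$-cost.

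I expect the main obstacle to be this accounting step: showing rigorously that the thickness of the modules arising inside \textsc{aida} (submodules and quotients at intermediate batches) is controlled by the final summands' thickness and by $k$. Submodules can in principle be thicker pointwise than the summands of $X$ in the final decomposition. I would try to bound it by $\dim X_\alpha$ restricted to a summand plus the at most $k$ new relation degrees. If that fails, I would fall back to $\lthick{X}+k$ uniformly, which still fits the stated form.
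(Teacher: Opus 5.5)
Your overall frame matches the paper: keep \textsc{aida}'s structure, substitute \autoref{alg:hom_exact} with \autoref{rem:fast_mm} for the Hom-computations, and multiply by at most $n$ batches. Attributing the $n\cdot n^{\omega+1}$ summand to a once-per-batch precomputation of local cokernels is harmless; in the paper it is the $\sum_{b,c} n_b n_c^\omega\le n^{\omega+1}$ part of \autoref{rem:fast_mm}.

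The genuine gap is the $k$-dependent term. \textsc{aida} does not recompute Hom-spaces for each of the $q^{k^2/4+\Oc(k)}$ subspace candidates. Nothing there is a ``Hom-check on a $k\times(k+\lthick{X})$ system'', and no module of thickness $k+\lthick{X}$ is involved. Such a system would also not produce the factor $n^\omega$ the bound needs. What actually happens is this: for each candidate and each block $c\in\mathcal{B}'$, one solves the linear system $(\ast\ast)$ of \cite[Proposition 3.4]{djk_arXiv}. It has one unknown per element of the already computed basis of $\bigoplus_{b}\Hom(X_b,X_c)$, plus $k n_c$ column-operation unknowns, and $k n_c$ equations. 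The missing idea is to bound the first group of unknowns by \autoref{cor:hom_size}, which gives at most $n\,\mathfrak{t}^{\R^d\setminus U}(X_c)$ of them. The cost is then $(kn_c)^{\omega-1}\bigl(n\,\mathfrak{t}^{\R^d\setminus U}(X_c)+kn_c\bigr)$, and summing with $\sum_c n_c\le n$ gives $n^{\omega}k^{\omega-1}(\lthick{X}+k)$ per batch. So $k+\lthick{X}$ counts unknowns per unit of $n$; it is not the thickness of any module.

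For the Hom term you identify the right obstacle, but your fallback does not hold. You propose bounding the intermediate blocks uniformly by $\lthick{X}+k$. However, the blocks $X_b$ decompose a $U$-cover of $X$, where $U$ is the upset of unprocessed degrees, and on $U$ their dimensions are not controlled by $X$ at all. The paper's resolution is that the only domains that occur, $X_b$ with $b\in\mathcal{B}'$, have no generators in $U$. A generator in $U$ of a $U$-projective indecomposable would split off as a free summand or generate the whole block. If it sat at degree $\alpha$, minimality would forbid nonzero entries of the current columns there; at any other degree, the degrees are incompatible. Their relations are already processed, hence also outside $U$. Then \autoref{cor:thickness_area} shows that only $\mathfrak{t}^{\R^d\setminus U}(X_c)$ enters the cost of \autoref{alg:hom_exact}. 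Over $\R^d\setminus U$ the blocks sum to $X$, which is what permits the comparison with $\sum_{Y\in\text{Ind}(X)}\thickb{Y}^\omega$. Finally, there can be quadratically many pairs $(b,c)$ per batch, not $n$. You need the product estimate $\sum_{b\neq c} n_b^\omega\,\mathfrak{t}^{\R^d\setminus U}(X_c)^\omega\le\bigl(\sum_b n_b\bigr)^\omega\sum_c\mathfrak{t}^{\R^d\setminus U}(X_c)^\omega$, not a count of pairs.
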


\begin{proof}

\textsc{AIDA} traverses the columns -- or rather groups of columns of the same degree -- of the presentation matrix of $X$ in some order compatible with $\R^d$. There are at most $n$ of these groups, assuming $k$ to be small. Let $\alpha$ be the degree of the group of columns currently being processed. Then there is an upset $U \subset \R^d$ containing $\alpha$, in which $\alpha$ is minimal, such that the processed part of the matrix is decomposed into blocks which present indecomposable summands $(X_b)_{b \in \mathcal{B}}$. We denote their size by $n_b \coloneqq b_0(X_b) + b_1(X_b)$. Formally they form an indecomposable decomposition of a $U$-cover \cite[Definition 4.11]{djk_arXiv} of $X$.

The first term of \textsc{AIDA}'s runtime \cite[Theorem 5.5]{djk_arXiv} comes from computing a basis of the space $\Hom(X_b, X_c)$ of homomorphisms for some pairs of indecomposables $X_b, X_c$, but at most those for which the current group of columns has any non-zero entries. We denote this set by $\mathcal{B}' \subset \mathcal{B}$. 
 For any $b \in \mathcal{B}'$, $X_b$ cannot have any generators in $U$: Since $X_b$ is $U$-projective, any generator in $U$ would form a free summand which can be split off or it already generates the whole module. If this generator has degree $\alpha$ then no column in the current group can have a nonzero entry at this generator due to minimality, and if it has degree not equal to $\alpha$, then it cannot be part of a relation due to degree-incompatibility.

Recall that the sum $\bigcup_{b \in \mathcal{B}} X_b$ forms a $U$-cover of $X$, so over $\R^d \setminus U$ this module is already isomorphic to $X$, whereas over $U$, $X$ is a quotient of this sum.
Then, the sum of the thicknesses of the indecomposables $X_b$ is not bounded by the thickness of $X$, but it is over $\R^d \setminus U$! We denote by $\mathfrak{t}^{\R^d \setminus U} (X) \coloneqq \max\limits_{\alpha \in \R^d \setminus U} \dim X_\alpha$ the thickness over $\R^d \setminus U$. We can therefore use \autoref{cor:thickness_area} to calculate an upper bound for the time to compute a basis of the space $\Hom(X_b, X_c)$ for each pair $b \neq c \in \mathcal{B}'$ with \autoref{alg:hom_exact}. Using fast matrix multiplicationwith $\omega$ the matrix multiplication constant, and with $c_0 \in \mathcal{B}$ a block where $\mathfrak{t}^{\R^d \setminus U} \left(X_c \right)$ is maximal this bound is, using \autoref{rem:fast_mm}

\begin{align*} \sum\limits_{b \neq c \in \mathcal{B}'} n_b^\omega   \mathfrak{t}^{\R^d \setminus U} \left(X_c \right)^\omega + n_b n_c^\omega
\leq \left( \sum\limits_{b \in \mathcal{B}'} n_b^\omega \right) \left( \sum\limits_{c \in \mathcal{B}'} \mathfrak{t}^{\R^d \setminus U} \left(X_c \right)^\omega \right) 
+ \left( \sum\limits_{b \in \mathcal{B}'} n_b \right) \left( \sum\limits_{c \in \mathcal{B}'} n^\omega_c \right) \\
\leq  \left( \sum\limits_{b \in \mathcal{B}'} n_b \right)^\omega \left( \sum\limits_{c \in \mathcal{B}'} \mathfrak{t}^{\R^d \setminus U} \left(X_{c_0} \right)^\omega  \right) 
+ n \left( \sum\limits_{c \in \mathcal{B}'} n_c \right)^\omega
 \leq  n^\omega \left( n + \sum_{Y \in \text{Ind}(X)} \thickb{Y}^\omega \right).
\end{align*}

Then for each of the $\Oc(q^{k^2/4 + \Oc(k)})$ iterations over subspaces and each $c \in \mathcal{B}'$  we need to solve the linear system $( \ast \ast)$ in \cite[Proposition 3.4]{djk_arXiv}. It contains one variable for each element in the computed basis of $\bigoplus\limits_{b \in \mathcal{B}'} \Hom(X_b, X_c)$. With \autoref{cor:hom_size} we can improve the bound on the size of this basis, too, to $n \mathfrak{t}^{\R^d \setminus U} (X_c)$. Then there are an additional $k n_c$ variables which control possible column operations and also $k n_c$ equations in this system.  This puts the computations in this step to at most

\[ \sum\limits_{c \in \mathcal{B'}} (k n_c)^{\omega-1}\left(n  \mathfrak{t}^{\R^d \setminus U} \left(X_c \right) +k n_c\right) \leq k^{\omega-1} \left( \sum\limits_{c \in \mathcal{B'}} n_c \right)^{\omega-1}\left(\lthick{X} n  +k n\right) = n^{\omega}k^{\omega -1}\left( \lthick{X}+k\right) .\]

\end{proof}

\subparagraph{Sparsity.}
All linear systems constructed by the algorithms are as sparse as $M$ and $N$, so by \autoref{prop:thickness_sparsity} their number of entries are roughly bounded by $\thick{X}$ and $\thick{Y}$, too. Because of possible fill up, this does not change the worst case run-times, but gives a heuristic about the practical run-time of \autoref{alg:restricted} and \hyperref[alg:hom_exact]{B}.

\subparagraph{Intervals.}

In \cite{dey_xin}, the authors sketch an algorithm which computes the set of homomorphisms for interval modules in $\Oc(n)$ for $d=2$ and $\Oc(n^2)$ for $d>2$. This is much faster than the algorithms we have presented. We are thus wondering if there is an algorithm which exploits pointwise low-dimensionality of both domain and target. On top, these algorithms for interval-modules have not been implemented yet and so we cannot compare their practical run-time against our algorithms.

\subparagraph{Open Problems and Future Work.}
\begin{itemize}
\item Find a way to measure how far away a module is from having thickness $k$ in an algebraic sense.
    \item \autoref{qu:enriched}: Can the idea of \autoref{alg:restricted} be extended to compute presentations of the \emph{persistence module} $\Hom(X,Y)$ similar to how this is possible for \autoref{alg:hom_exact}?
   \item Implement the algorithms for interval-modules from \cite{dey_xin}.
     \item Is there an algorithm that benefits from both $X$ \emph{and} $Y$ having low thickness?
    \item We conjecture that by subdividing $Z^d$, there is a faster way to compute all cokernels of $M_{\leq g}$ for $g \in G$ in one sweep with runtime in $\Oc(dn^3)$, instead of calling \autoref{alg:cokernel} each time. This would cut the runtime of \autoref{alg:hom_exact} down to $\Oc(m^\omega \thickb{Y}^\omega + dn^3)$ and would similarly effect the runtime of \textsc{aida}. 
\end{itemize}

\subsection*{Summary}
We have presented $3$ algorithms, and their duals, to compute the vector space of degree-$0$ homomorphisms.

The classical approach in \autoref{alg:hom_exact} achieves the best worst-case complexity and is independent of the number of parameters
$d$. We use it to accelerate \textsc{aida}, thereby improving on the $\mathcal{O}(n^{2\omega+1})$ runtime of Dey and Xin \cite{DeyXin} for decomposing modules with uniquely graded relations. The new \autoref{alg:restricted} also improves on the standard computation, at least in the important case $d=2$; however, for higher $d$ its runtime additionally depends on the complexity of computing kernels of graded matrices, which remains unknown for $d \geq 3$.

On large indecomposable modules coming from relevant real-world data (point sets in $\R^2$ from immune-cell locations \cite{Vipond21}), \autoref{alg:restricted} and its simpler version \autoref{alg:mixed} fare substantially better
than the classical approach (\autoref{alg:hom_exact}) and the direct computation (\autoref{alg:direct}), even if we omit the reduction to a basis in the latter (\autoref{fig:runtime_log}). 

Our results directly accelerate the computation of indecomposable decompositions with \textsc{aida}, the computation of Bjerkevik's \emph{Pruning} \cite{Bjerkevik2025} to stabilise decompositions (with H\aa vard Bjerkevik and Fabian Lenzen \footnote{https://github.com/JanJend/Stable-Decomposition}), and the algorithm in \cite{FJ26} that we devised with Marc Fersztand to compute the \emph{Skyscraper Invariant} because it uses \textsc{aida} heavily. 
Because the computation of homomorphisms is such a fundamental operation, our results promise to improve many future algorithms for Multiparameter Persistence Modules.

At last, we want to remark again that our theorems and algorithms do not depend on $\Z^d$. Every statement is valid for finitely presented modules over any partially ordered set. 

\printbibliography

\appendix

\section{Algorithms}\label{sec:algs}

\begin{algorithm}[H]
\caption{Computation of Local Cokernels}
\label{alg:cokernel}
\DontPrintSemicolon
\KwIn{A presentation $N\in \K^{G \times R}$ of a module $X$ and a degree $\alpha \in \R^d$}
\KwOut{\begin{itemize} \item The cokernel $d_\alpha \colon \left(A[G]\right)_\alpha \simeq \K^{|{G\leq\alpha}|} \to X_\alpha$ of $N_\alpha$ 
\item
An ordered sub-set $G_\alpha \subset G$, such that $d_\alpha(G_\alpha)$ forms a basis for $X_\alpha$
\end{itemize}}
The matrix $N_{\leq \alpha} \colon \left(A[R]\right)_\alpha \to \left(A[G]\right)_\alpha$ is given by the sub-matrix of $N$ on the sets of generators and relations $\set{G \leq \alpha}$ and $\set{R \leq \alpha}$ whose degrees are smaller than $\alpha$\;
Solve $N_{\leq \alpha}^{t}x = 0$ with Gaussian Elimination to compute the cokernel $d_\alpha$\; \label{solve_transpose} 
Initialise $G_\alpha\coloneqq \emptyset$  \;
\For{$g \in \set{G \leq \alpha}$ }{
    \If{$\left(N_{\leq \alpha}\right)_{g, \bullet}$ does not contain a pivot}{
        Append $g$ to \texttt{$G_\alpha$} \;
    }
}
\Return $G_\alpha$ and $d_\alpha$\;
\end{algorithm}

\begin{algorithm}[H]
\renewcommand{\thealgocf}{A-$1/2$}
\caption{Mixed Computation of Homomorphisms from Presentations.}
\label{alg:mixed}
\DontPrintSemicolon
\KwIn{Presentation matrices $M \in \K^{G \times R}$, $N \in \K^{G' \times R'}$}
\KwOut{Graded matrices $\{Q_i\}_{i \in I} \in \K^{G' \times G}$ which descend to a basis of $\Hom(X, \, Y)$}

\For{$\alpha \in \deg(G)$}{
    $d_\alpha, \, G'_\alpha   \gets$ \autoref{alg:cokernel}$(N, \alpha)$\;
    \For{$g \in G$ with $\deg(g) = \alpha$}{
    $Q_{g', g} \gets \begin{cases}
        \text{a variable } q_{g',g} &\text{ if } g' \in G'_\alpha \\
        0 &\text{ otherwise}
    \end{cases}$
    }
} 
Calculate a basis $(Q_j, \, P_j)_{j \in J} \in \K^{G' \times G}, \K^{R' \times R}$ of the solution space of the system $Q M = N P$ \label{line:semirestricted}\;
Perform the reduction of \autoref{alg:direct}, \autoref{homotopy_reduction}\;
$J' \gets \{j \in J \, | \ Q_j \neq 0 \}$\;
\KwRet{$\left(Q_j\right)_{j \in J'}$}
\end{algorithm}

\begin{algorithm}[H]
\renewcommand{\thealgocf}{B} % override label
\caption{Computation via Hom-Exactness}
\label{alg:hom_exact}
\DontPrintSemicolon
\KwIn{Presentation matrices $M \in \K^{G \times R}$, $N \in \K^{G' \times R'}$ for $X$ and $Y$}
\KwOut{Graded matrices $\{Q_i\}_{i \in I} \in \K^{G' \times G}$ which descend to a basis of $\Hom(X, \, Y)$}
\For{$\alpha \in \deg(G') \cup \deg(R')$}{
    $e_\alpha, \, G'_\alpha   \gets$ \autoref{alg:cokernel}$(N, \alpha)$\label{line:cokernel3}\;
} 
Initialise $\vec N \in \K^{ \left( \sum_R \dim Y_{\deg(r)} \right) \times  \left( \sum_G \dim Y_{\deg(g)} \right)}$ \;
\For{$g \in G$}{
    \For{$r \in R$}{
        ${\vec N}_{r,g} \coloneqq \left(e_{\deg(r)}\right)_{|G'_{\deg(g)}}$\;
    }
}
$M^t \otimes {\vec N} \gets \left(M_{g,r} \cdot {\vec N}_{r,g}\right)_{(r,g) \in R \times G} \in \K^{ \left( \sum_R \dim Y_{\deg(r)} \right) \times  \left( \sum_G \dim Y_{\deg(g)} \right)}$\;
$K \in \K^{\sum_G \dim Y_{\deg(g)} \times J } \gets \ker M^t \otimes {\vec N}$
\label{line:kernel_exact}\;
\For{ $j \in J$}{
    Initialise $Q_j \in \K^{G' \times G}$\;
    \For{$g \in G$}{
        \For{$g' \in G'$}{
            \If{$g' \in G'_{\deg(g)}$}{
                $(Q_j)_{g', g} \gets K_{j, (g, g')}$\;
            }
            \Else{ $(Q_j)_{g', g} \gets 0$\;
            }
        }     
    }
}
\Return$(Q_{j})_{j \in J}$\;
\end{algorithm}

\section{Plots}\label{sec:plots}

\begin{figure}[H]
\centering
\includegraphics[width=0.8\linewidth]{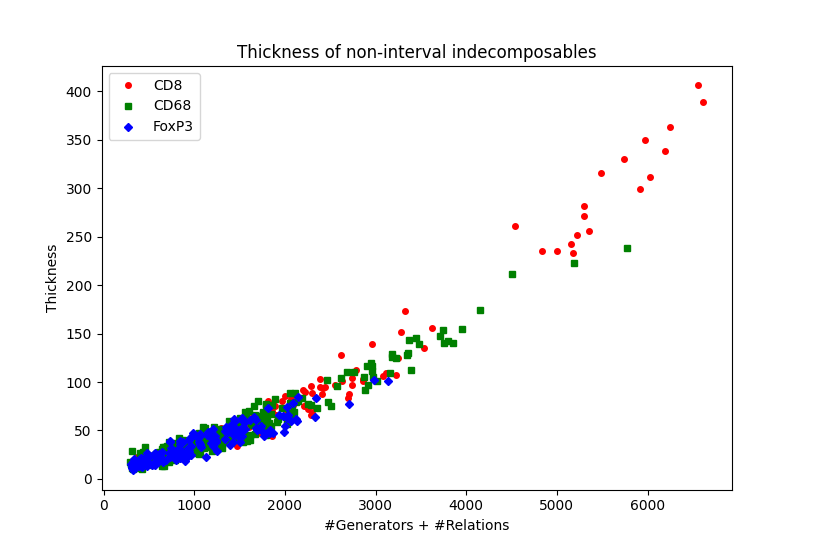}
\caption{Thickness (y) and $|b_0+b_1|$ (x) of all non-interval indecomposables in $H_1$.}
\label{fig:thick_distribution}
\end{figure}

\begin{figure}[H]
\centering
\includegraphics[width=0.8\linewidth]{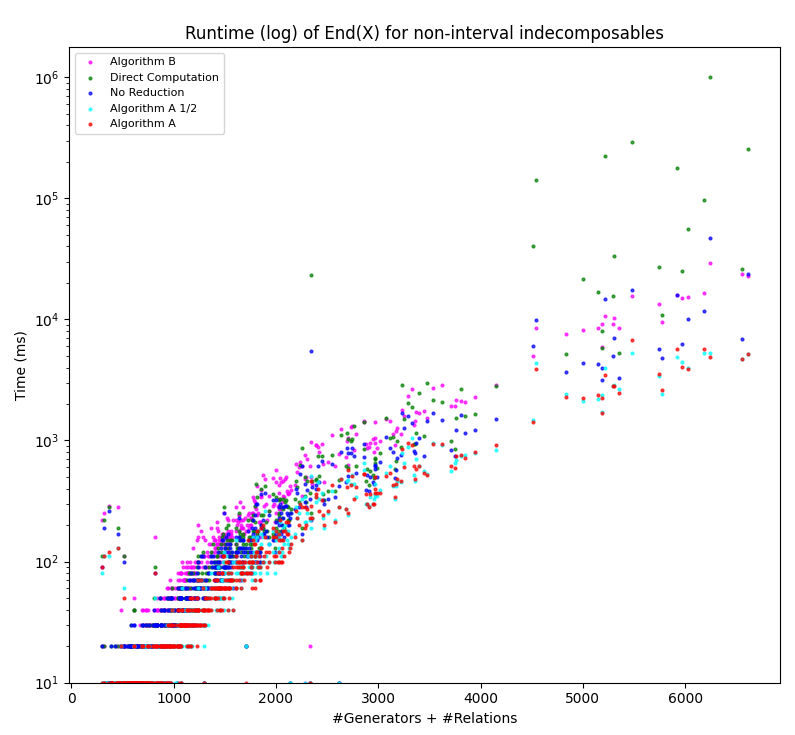}
\caption{Runtimes (log-scale) to compute $\End(X)$ for non-interval indecomposables in the dataset.}
\label{fig:runtime_log}
\end{figure}

\end{document}